\newtheorem{theorem}{Theorem}[section]
\newtheorem{remark}{Remark}[section]
\newtheorem{lemma}{Lemma}[section]
\numberwithin{equation}{section}
\def\d{\mathrm{d}}
\def\no{\nonumber}
\def\R{\mathbb{R}}
\def\T{\mathbb{T}}
\def\eps{\epsilon}
\def\l{\langle}
\def\r{\rangle}
\newcounter{wronumber}\setcounter{wronumber}{1}
\begin{document}
\title[From VPB to incompressible NSFP]
			{From Vlasov-Poisson-Boltzmann system to incompressible Navier-Stokes-Fourier-Poisson system: convergence for classical solutions}

\author[Mengmeng Guo]{Mengmeng Guo}
\address[Mengmeng Guo]
        {\newline School of Mathematics and Statistics, Wuhan University, Wuhan, 430072, P. R. China}
\email{guomengmeng66@whu.edu.cn}
\author[Ning Jiang]{Ning Jiang}
\address[Ning Jiang]{\newline School of Mathematics and Statistics, Wuhan University, Wuhan, 430072, P. R. China}
\email{njiang@whu.edu.cn}

\author[Yi-Long Luo]{Yi-Long Luo}
\address[Yi-Long Luo]
{\newline School of Mathematics and Statistics, Wuhan University, Wuhan, 430072, P. R. China}
\email{yl-luo@whu.edu.cn}

\thanks{ June 1, 2019}

\maketitle

\begin{abstract}
	For the one-species Vlasov-Poisson-Boltzmann (VPB) system in the scaling under which the moments of the fluctuations formally converge to the incompressible Navier-Stokes-Fourier-Poisson (NSFP) system, we prove the uniform estimates with respect to the Knudsen number $\eps$ for the fluctuations. As a consequence, the existence of the global-in-time classical solutions of VPB with all $\eps \in (0,1]$ is established in whole space under small size of initial data, and the convergence to incompressible NSFP as $\eps$ go to 0 is rigorously justified. \\
	
	\noindent\textsc{Keywords.} One-species Vlasov-Poisson-Boltzmann system; incompressible Navier-Stokes-Fourier-Poisson equations; classical solutions; uniform energy estimates; convergence \\
	
	\noindent\textsc{AMS subject classifications.} 76P05; 82C40; 82D05
\end{abstract}



\tableofcontents


\section{Introduction}

\subsection{One-species Vlasov-Poisson-Boltzmann system}
In this paper, we study the following scaled Vlasov-Poisson-Boltzmann (briefly, VPB) system (see Ars\'enio-Saint-Raymond's book \cite{Arsenio-SaintRaymond-2016})
\begin{align}
  \epsilon \partial _t f_\epsilon + v \cdot \nabla_x f_\epsilon + \eps \gamma \nabla_x \phi_\eps \cdot \nabla_v f_\epsilon = \tfrac{1}{\epsilon} B ( f_\epsilon, f_\epsilon) \,, & \label{Equ-Kinetic-f} \\
  \Delta_x \phi_\epsilon = \tfrac{\gamma}{\epsilon} \Big( \int_{\mathbb{R}^3} f_\epsilon dv - 1 \Big) \,, & \label{Equ-Poisson-phi}
\end{align}
where $\epsilon > 0$ denotes the Knudsen number, which is the ratio of the mean free path to the macroscopic length scale, and $\gamma > 0$ is the ratio of the electric repulsion according to Gauss's law and the Knudsen number. Here the unknown $f_\epsilon=f_\epsilon(t,x,v) \geq 0$ is a nonnegative function standing for the number density of gas particles which have velocity $v \in \R^3$ at position
$x \in \mathbb{R}^3$ at time $t>0$. The self-consistent electric potential $\phi_\epsilon = \phi_\epsilon(t,x) \in \R$ is coupled with the distribution function $f_\epsilon(t,x,v)$ through the Poisson equation. The bilinear function $B$ with hard-sphere interaction is defined by
\begin{equation}\label{1.10}
  B(f,f) = \int_{\mathbb{R}^3 \times \mathbb{S}^2} ( f^\prime f_1^\prime - f f_1 ) | ( v - v_1 ) \cdot \omega | \d \omega \d v_1 \,,
\end{equation}
where
\begin{equation*}
  f = f(t,x,v) \,, \quad f^\prime = f ( t , x , v^\prime ) \,, \quad f_1 = f ( t , x , v_1 ) , \quad f^\prime_1 = f ( t , x , v_1^\prime ) \,,
\end{equation*}
\begin{equation*}
  v^\prime = v - [ ( v - v_1 ) \cdot \omega ] \omega \,, \quad v_1^\prime = v_1 + [ ( v - v_1 ) \cdot \omega ] \omega \,, \quad \omega \in \mathbb{S}^2 \,.
\end{equation*}
This hypothesis is satisfied for all physical model and is more convenient to work with but do not impede the generality of our results. The system \eqref{Equ-Kinetic-f}-\eqref{Equ-Poisson-phi} describes the evolution of a gas of one species of charged particles (ions or electrons) subject to an auto-induced electrostatic force. We then impose on the initial data of \eqref{Equ-Kinetic-f}-\eqref{Equ-Poisson-phi}
\begin{equation}
  \begin{aligned}
    f_\eps (0, x , v) = f_{\eps, 0} (x, v) \geq 0 \,.
  \end{aligned}
\end{equation}
A function $\phi_{\eps , 0} (x) \in \R$ can thereby be introduced by
\begin{equation}\label{phi0-1}
  \begin{aligned}
    \Delta_x \phi_{\eps, 0} = \tfrac{\gamma}{\eps} \Big( \int_{\R^3} f_{\eps, 0} \d v - 1 \Big) \,,
  \end{aligned}
\end{equation}
which is consequently the initial value of $\phi_\eps$.

A physically relevant requirement for solutions to the VPB system are their mass, momentum and energy are preserved with time. This is also an {\em a priori} property of the VPB system on the torus, which reads
\begin{equation}\label{Consevtn-law-f}
  \begin{aligned}
    & \tfrac{\d}{\d t} \int_{\R^3 \times \R^3} f_\eps (t,x,v) \d v \d x = 0 \,, \ \tfrac{\d}{\d t} \int_{\R^3 \times \R^3} v f_\eps (t,x,v) \d v \d x = 0 \,, \\
    & \tfrac{\d}{\d t} \Big( \int_{\R^3 \times \R^3} |v|^2 f_\eps (t,x,v) \d v \d x + \int_{\R^3} | \nabla_x \phi_\eps (t,x) |^2 \d x \Big) = 0 \,.
  \end{aligned}
\end{equation}

It is well-known that the global equilibrium for the VPB system is the normalized global {\em Maxwellian} $M(v)$ defined as
\begin{equation*}
  M (v) = \tfrac{1}{( 2 \pi)^{\frac{3}{2}}} \, e^{-\frac{|v|^2}{2}} \,.
\end{equation*}
Our goal in current paper is to justify the incompressible Navier-Stokes-Fourier-Poisson (briefly, NSFP) limit of \eqref{Equ-Kinetic-f}-\eqref{Equ-Poisson-phi}. More precisely, we consider the fluctuation around the global Maxwellian with size $\epsilon$ (this is called Navier-Stokes scaling, see \cite{BGL1}),
$$f_\eps (t,x,v) = M( 1 + \eps g_\eps (t,x,v) )\,.$$
We are interested in justifying the limit of the fluctuation $g_\epsilon$ as the Knudsen number $\eps$ tends to zero. This leads to the perturbed VPB system
\begin{equation}\label{VPB-g}
  \left\{
    \begin{array}{l}
      \epsilon \partial_t g_\epsilon + v \cdot \nabla_x g_\epsilon + \tfrac{1}{\epsilon} L g_\epsilon - \gamma v \cdot \nabla_x \phi_\epsilon - \gamma \epsilon v \cdot \nabla_x \phi_\epsilon g_\epsilon
      +\gamma \epsilon \nabla_x \phi_\epsilon \cdot \nabla_v g_\epsilon = Q ( g_\epsilon , g_\epsilon ) \,, \\
      \Delta_x \phi_\epsilon = \gamma \int_{\R^3} g_\epsilon M \d v  \,,
    \end{array}
  \right.
\end{equation}
where the bilinear symmetric operator $Q$ is
\begin{equation}\label{Colli-Q}
  Q(g,h) = \tfrac{1}{2 M} \big[ B (Mg , Mh) + B (Mh , Mg) \big] \,,
\end{equation}
and the linearized Boltzmann collisional operator $L$ takes the form
\begin{equation}\label{Def-L}
  Lg = - 2 Q (g, 1) = - \tfrac{1}{M} \big\{ B ( M ,M g ) + B ( M g , M) \big\} \,.
\end{equation}
Moreover, the collisional frequency for the hard sphere interaction can be defined as (see \cite{BGL1} for instance)
\begin{equation}\label{Colli-Frequence-nu}
  \begin{aligned}
    \nu (v) = \int_{\R^3} |v-v_1| M (v_1) \d v_1 \,.
  \end{aligned}
\end{equation}
Furthermore, the linearized Boltzmann collisional operator $L$ can be decomposed as
\begin{align*}
  L g = \nu g + K g \,,
\end{align*}
where
\begin{equation*}
  \begin{aligned}
    K g(v) = - \iint_{\mathbb R^3 \times \mathbb {S}^2} | ( v-v_1 ) \cdot \omega | ( g^\prime + g_1^\prime - g_1 ) M_1 \d x \d v = \int_{\mathbb {R}^3} \mathcal{K}(v,v_1) g (v_1) \d v_1
  \end{aligned}
\end{equation*}
is a self-adjoint compact operator on $L^2(Mdv)$ with a real symmetric integral kernel $\mathcal{K}(v,v_1)$ (see \cite{Levermore-Sun-2010-KRM} for instance). The null space of the operator $L$ is the five-dimensional space spanned by the collision invariants \cite{BGL1}
\begin{equation}\label{invariant}
  \mathcal{N} = \textrm{Ker} L = \textrm{Span} \{ 1 , v_1 , v_2 , v_3 , |v|^2 \} \,.
\end{equation}
Notice that the projection operator $L$ is nonnegative. We also denote $P$ by the projection from $L^2(Mdv)$ to $\mathcal{N}$. In what follows, we write $P g_\epsilon$ in the form of coordinates
\begin{equation}\label{Proj-Hydrodynamic}
  P g_\epsilon=\{ a_{\eps} (t,x) + b_{\eps} (t,x) \cdot v + c_{\eps} (t,x) |v|^2 \} \,,
\end{equation}
where $a_\eps,\ b_\eps=(b_{\eps, 1} , b_{\eps, 2}, b_{\eps, 3})$ and $c_\eps$ are the coefficients of the macroscopic components $P g_\eps$.

Furthermore, without loss of generality, the initial data $f_{\eps, 0} (x,v)$ of \eqref{Equ-Kinetic-f}-\eqref{Equ-Poisson-phi} can be given the form of
\begin{equation}\label{IC-near-Equilibrium}
  \begin{aligned}
    f_{\eps, 0} (x,v) = M(v) \big[ 1 + \eps g_{\eps, 0} (x,v) \big] \geq 0 \,,
  \end{aligned}
\end{equation}
where $g_{\eps,0} (x,v) \in \R$ is the initial data of the perturbed system \eqref{VPB-g}, namely,
\begin{equation}\label{IC-g}
\begin{aligned}
g_\eps (0,x,v) = g_{\eps, 0} (x,v) \,,
\end{aligned}
\end{equation}
Moreover, the function $\phi_{\eps, 0}$ given in \eqref{phi0-1} is equivalently defined by
\begin{equation}
   \begin{aligned}
     \Delta_x \phi_{\eps, 0} (x) = \gamma \int_{\R^3} g_{\eps, 0} (x,v) M(v) \d v \,.
   \end{aligned}
\end{equation}

\subsection{Notations and main results}

Through this paper, we use $C$ to represent some generic positive constant (generally large) while $\lambda$ is used to denote some generic positive constant (generally small), where both $C$ and $\lambda$ may take different values at different places. The symbol $C(\cdot)$ denotes that constants depend on some parameters in the argument. In addition, $A \lesssim B$ means that there is a generic constant $C>0$ such that $A\leq C B$. Furthermore, if $C_1 A \leq B \leq C_2 A$ holds for some generic constants $C_1, C_2 > 0$, we denote by $A \thicksim B$.

We index the $L^p$ spaces by the name of the concerned variable. Namely,
\begin{equation*}
  \begin{aligned}
    L^p_x = L^p (\d x) \,,  \ L^p_v (w) = L^p (w M \d v) \,, \ L^\infty_x = L^\infty_x (\d x)
  \end{aligned}
\end{equation*}
with the norms
\begin{equation*}
  \begin{aligned}
    & \| \phi \|_{L^p_x} = \Big( \int_{\R^3} | \phi (x) |^p \d x \Big)^\frac{1}{p} < \infty \,, \ \| g \|_{L^p_v (w)} = \Big( \int_{\R^3} | g (v) |^p w M \d v \Big)^\frac{1}{p} < \infty \,, \\
    & \| \phi \|_{L^\infty_x} = \textrm{ess} \sup_{x \in \R^3} | \phi (x) | < \infty \,,
  \end{aligned}
\end{equation*}
respectively, where $w = 1$ or $\nu$. If $w = 1$, we denote by $L^p_v = L^p_v (1) $. Next we introduce $L^p_x L^q_v (w)$ space for $p,q \in [1, \infty]$ endowed with the norms
\begin{equation*}
  \begin{aligned}
    \| g \|_{L^p_x L^q_v (w)} =
    \left\{
      \begin{array}{l}
        \Big( \int_{\R^3} \| g (x, \cdot) \|^p_{L^q_v (w)} \d x \Big)^\frac{1}{p}  \qquad\qquad\qquad p,q \in [ 1, \infty ) \,, \\
        \textrm{ess}\sup_{x \in \R^3} \| g (x,\cdot) \|_{L^q_v (w)}  \qquad\qquad\quad\ \  p = \infty \,, q \in [ 1, \infty ) \,, \\
        \Big( \int_{\R^3} \big| \textrm{ess}\sup_{v \in \R^3} |g (x,v)| \big|^p \d x \Big)^\frac{1}{p}  \quad p \in [ 1, \infty ) \,, q = \infty \,, \\
        \textrm{ess}\sup_{(x,v) \in \R^3 \times \R^3} |g (x,v)| \,, \qquad\quad \ p = q = \infty \,.
      \end{array}
    \right.
  \end{aligned}
\end{equation*}
If $w=1$, we also denote by $L^p_x L^q_v = L^p_x L^q_v (1)$. Furthermore, if $p = q$, the notation $L^p_{x,v} (w)$ means $L^p_x L^p_v (w)$. For $p=2$, we use $ \langle \cdot , \cdot \rangle_{L^2_{x,v}} $, $\langle \cdot , \cdot \rangle_{L^2_v}$ and $\langle \cdot , \cdot \rangle_{L^2_x}$ to denote the inner product in the Hilbert spaces $L^2_{x,v}$, $L^2_v$ and $L^2_x$, respectively.

The multi-index $\alpha=[\alpha_1,\alpha_2,\alpha]$ and $\beta=[\beta_1,\beta_2,\beta_3]$ in $\mathbb{N}^3$ will be used to record spatial and velocity derivatives, respectively. We denote the $(\alpha, \beta)^{th}$ partial derivative by
 $$ \partial_x^\alpha \partial_v^\beta = \partial_{x_1}^{\alpha_1} \partial_{x_2}^{\alpha_2} \partial_{x_3}^{\alpha_3} \partial_{v_1}^{\beta_1} \partial_{v_2}^{\beta_2} \partial_{v_3}^{\beta_3} \,.$$
If each component of $\alpha \in \mathbb{N}^3$ is not greater than that of $\tilde{\alpha}$'s, we denote by $\alpha \leq \tilde{\alpha}$. The symbol $\alpha < \tilde{\alpha}$ means $\alpha \leq \tilde{\alpha}$ and $|\alpha| < |\tilde{\alpha}|$, where $| \alpha | = \alpha_1 + \alpha_2 + \alpha_3$. We now introduce the Sobolev spaces $H^N_x$, $H^N_x L^2_v$, $H^N_x L^2_v (\nu)$, $H^N_{x,v}$ and $H^N_{x,v} (\nu)$ endowed with the norms
\begin{equation*}
  \begin{aligned}
    & \| \phi \|_{H^N_x} = \Big( \sum_{|\alpha| \leq N} \| \partial^\alpha_x \phi \|_{L^2_x}^2 \Big)^\frac{1}{2} \,, \ \| g \|_{H^N_x L^2_v} = \Big( \sum_{|\alpha| \leq N} \| \partial^\alpha_x g \|^2_{L^2_{x,v}} \Big)^\frac{1}{2} \,, \\
    & \| g \|_{H^N_x L^2_v (\nu)} = \Big( \sum_{|\alpha| \leq N} \| \partial^\alpha_x g \|^2_{L^2_{x,v}(\nu)} \Big)^\frac{1}{2} \,, \ \| g \|_{H^N_{x,v}} = \Big( \sum_{|\alpha| + |\beta| \leq N} \| \partial^\alpha_x \partial^\beta_v g \|^2_{L^2_{x,v}} \Big)^\frac{1}{2} \,, \\
    & \| g \|_{H^N_{x,v}(\nu)} = \Big( \sum_{|\alpha| + |\beta| \leq N} \| \partial^\alpha_x \partial^\beta_v g \|^2_{L^2_{x,v}(\nu)} \Big)^\frac{1}{2} \,,
  \end{aligned}
\end{equation*}
respectively. Furthermore, we give the spaces $\widetilde{H}^N_{x,v}$ and $\widetilde{H}^N_{x,v} (\nu)$ endowed with the norms
\begin{equation*}
  \begin{aligned}
    \| g \|_{\widetilde{H}^N_{x,v}} = \Bigg( \sum_{|\alpha| + |\beta| \leq N \,, \beta \neq 0} \| \partial^\alpha_x \partial^\beta_v g \|^2_{L^2_{x,v}} \Bigg)^\frac{1}{2} \,, \ \| g \|_{\widetilde{H}^N_{x,v} (\nu)} = \Bigg( \sum_{|\alpha| + |\beta| \leq N \,, \beta \neq 0} \| \partial^\alpha_x \partial^\beta_v g \|^2_{L^2_{x,v}(\nu)} \Bigg)^\frac{1}{2} \,.
  \end{aligned}
\end{equation*}
For convenience to state our main results, we will define the energy functional
\begin{equation}\label{Energy-E}
  \begin{aligned}
    \mathcal{E}_N ( g, \phi ) = \| g \|^2_{H^N_x L^2_v} + \| \nabla_x \phi \|^2_{H^N_x} + \| ( I - P ) g \|^2_{H^N_{x,v}}
  \end{aligned}
\end{equation}
and the energy dissipative rate functional
\begin{equation}\label{Energy-D}
  \begin{aligned}
    \mathcal{D}_{N,\eps} ( g ) = \tfrac{1}{\eps^2} \| (I-P) g \|^2_{H^N_{x,v}(\nu)} + \| \nabla_x P g \|^2_{H^{N-1}_x L^2_v} + \| \l g , 1 \r_{L^2_v} \|^2_{H^{N-1}_x} \,.
  \end{aligned}
\end{equation}

There are two main theorems built in current paper. The first theorem is on the global existence of the Vlasov-Poisson-Boltzmann system with respect to the Knudsen number $\epsilon$, and the second is on the incompressible Navier-Stoker-Fourier-Poisson system limit $\epsilon\rightarrow 0$ taken in the solutions $g_{\epsilon}$ of the VPB system \eqref{Equ-Kinetic-f}-\eqref{Equ-Poisson-phi} which is constructed in the first theorem.

\begin{theorem}\label{Thm-global}
	Let integer $N \geq 3$ and $ 0 < \epsilon < 1 $. Then there exists $\ell_0>0$, independent of $\epsilon$, such that if $\mathcal{E}_N (g_{\eps, 0} , \phi_{\eps, 0}) \leq \ell_0$, then the Cauchy problem of \eqref{VPB-g}-\eqref{IC-g} admits a global solution $( g_\eps (t,x,v) , \phi_\eps (t,x) )$ satisfying $g_\eps \in H^N_{x,v}$, $\nabla_x \phi_\eps \in H^N_x$, and the global estimate
	\begin{equation}\label{Uniform-Global-Bnd}
	  \begin{aligned}
	    \sup_{t \geq 0} \mathcal{E}_N (g_\eps (t), \phi_\eps (t)) + c_0 \int_0^\infty \mathcal{D}_{N, \eps} (g_\eps (t)) \d t \lesssim \mathcal{E}_N ( g_{\eps, 0} , \phi_{\eps, 0} ) \,,
	  \end{aligned}
	\end{equation}
	where $c_0 > 0$ is independent of $\eps$. Moreover, $\phi_\eps (t,x)$ and
	\begin{equation*}
	  f(t,x,v) = M (v) ( 1 + \epsilon g_\eps (t,x,v)) \geq 0
	\end{equation*}
	obeys the Cauchy problem \eqref{Equ-Kinetic-f}-\eqref{Equ-Poisson-phi}-\eqref{IC-near-Equilibrium}.
\end{theorem}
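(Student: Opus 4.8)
The plan is to prove the global existence estimate \eqref{Uniform-Global-Bnd} by a continuity argument: define the a priori hypothesis that $\sup_{0\le t\le T}\mathcal{E}_N(g_\eps(t),\phi_\eps(t))\le\delta$ for a small $\delta$ to be chosen (independent of $\eps$), establish a closed energy inequality under this hypothesis, and then use local existence plus a bootstrap to propagate the bound to $T=\infty$ provided $\mathcal{E}_N(g_{\eps,0},\phi_{\eps,0})\le\ell_0\ll\delta$. The nonnegativity of $f$ is automatic once one constructs $f_\eps$ from $g_\eps$ via a positivity-preserving iteration scheme for \eqref{Equ-Kinetic-f}, or alternatively one runs the whole argument directly at the level of $f_\eps$ and transfers. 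The heart of the matter is therefore the uniform-in-$\eps$ energy estimate.

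First I would carry out the pure spatial energy estimates: apply $\partial_x^\alpha$ for $|\alpha|\le N$ to \eqref{VPB-g}, pair with $\partial_x^\alpha g_\eps$ in $L^2_{x,v}$, and use the coercivity $\langle Lg,g\rangle_{L^2_v}\gtrsim\|(I-P)g\|^2_{L^2_v(\nu)}$ to extract the dissipation $\tfrac{1}{\eps^2}\|(I-P)g_\eps\|^2_{H^N_xL^2_v(\nu)}$. The transport term $v\cdot\nabla_x g_\eps$ is skew-adjoint and drops; the singular term $\tfrac1\eps\gamma v\cdot\nabla_x\phi_\eps$ must be handled using the Poisson equation $\Delta_x\phi_\eps=\gamma\langle g_\eps,1\rangle_{L^2_v}$, which turns $\langle v\cdot\nabla_x\phi_\eps,g_\eps\rangle$ into an exact time derivative of $\|\nabla_x\phi_\eps\|^2$ modulo lower-order terms — this is the mechanism by which the $\|\nabla_x\phi\|^2_{H^N_x}$ piece enters the energy $\mathcal{E}_N$ and why the factor $\tfrac1\eps$ does not blow up. The quadratic terms $Q(g_\eps,g_\eps)$, $\gamma\eps v\cdot\nabla_x\phi_\eps g_\eps$ and $\gamma\eps\nabla_x\phi_\eps\cdot\nabla_v g_\eps$ are estimated by the standard trilinear Boltzmann nonlinearity bounds together with Sobolev embedding $H^2_x\hookrightarrow L^\infty_x$ (hence the requirement $N\ge3$), each producing a factor $\sqrt{\mathcal{E}_N}\,\mathcal{D}_{N,\eps}$ or $\sqrt{\mathcal{E}_N}\,\mathcal{E}_N$ that is absorbed by smallness.

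Next I would obtain the mixed velocity-spatial estimates needed to close $\|(I-P)g_\eps\|^2_{H^N_{x,v}}$: since only $v\cdot\nabla_x$ spatial derivatives are naturally dissipated, one applies $\partial_x^\alpha\partial_v^\beta$ with $\beta\ne0$, notes that $\partial_v$ hitting $\nu(v)g_\eps$ and $Kg_\eps$ is controlled (the weight $\nu$ has bounded logarithmic derivatives, $K$ is smoothing), and that $\partial_v$ hitting the transport term $v\cdot\nabla_x g_\eps$ generates $\partial_x^{\alpha+e_i}\partial_v^{\beta-e_i}g_\eps$ with one velocity derivative traded for one spatial derivative — this is the classical Guo-type hierarchy, closed by downward induction on $|\beta|$ using the already-controlled pure-$x$ dissipation and absorbing the $\tfrac1\eps$ factors correctly (the $(I-P)$ part carries $\eps^{-2}$ dissipation, which dominates). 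A delicate point specific to this scaling is that $\partial_v$ acting on the term $\tfrac1\eps Lg_\eps$ must not lose a power of $\eps$; one writes $Lg_\eps=Lg_\eps$ acting on $(I-P)g_\eps$ modulo explicit macroscopic pieces, and the macroscopic pieces are $\eps$-free and handled by the $x$-estimates. Finally, to recover the full dissipation $\|\nabla_x Pg_\eps\|^2_{H^{N-1}_xL^2_v}+\|\langle g_\eps,1\rangle_{L^2_v}\|^2_{H^{N-1}_x}$ — which is \emph{not} produced by the coercivity of $L$ — I would use the macroscopic equations obtained by taking moments $1,v,|v|^2$ of \eqref{VPB-g} (a local conservation-law system for $a_\eps,b_\eps,c_\eps$ coupled to the Poisson field) and run the standard macroscopic dissipation argument: test these equations against suitable combinations of $\nabla_x a_\eps$, $\nabla_x b_\eps$, $\nabla_x c_\eps$, build an interaction functional $\mathcal{I}_N(g_\eps)$ with $|\mathcal{I}_N|\lesssim\mathcal{E}_N$ whose time derivative dominates the macroscopic dissipation up to $\eps^{-2}\|(I-P)g_\eps\|^2$ and a small multiple of itself; the Poisson coupling actually \emph{helps} here, since $\Delta_x\phi_\eps=\gamma\langle g_\eps,1\rangle_{L^2_v}$ gives $\|\langle g_\eps,1\rangle_{L^2_v}\|_{H^{N-1}_x}\sim\|\nabla_x\phi_\eps\|_{H^{N}_x}$ directly and contributes extra coercivity in the $a_\eps$ (density) mode. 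Combining the pure-$x$ estimate, the mixed estimate, and $\eta$ times the macroscopic interaction estimate for $\eta$ small yields $\tfrac{\d}{\d t}\big(\mathcal{E}_N+\eta\mathcal{I}_N\big)+c_0\mathcal{D}_{N,\eps}\le C\sqrt{\mathcal{E}_N}\,\mathcal{D}_{N,\eps}$, and the a priori smallness makes the right side absorbable, giving \eqref{Uniform-Global-Bnd} after integration.

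The main obstacle I expect is bookkeeping the powers of $\eps$ so that every term is genuinely uniform: the singular terms $\tfrac1\eps v\cdot\nabla_x g_\eps$, $\tfrac1\eps Lg_\eps$, and $\tfrac\gamma\eps v\cdot\nabla_x\phi_\eps$ must combine so that the only $\eps$-negative quantity surviving on the dissipation side is $\eps^{-2}\|(I-P)g_\eps\|^2_{H^N_{x,v}(\nu)}$, while the macroscopic dissipations and $\|\nabla_x\phi_\eps\|^2_{H^N_x}$ remain $\eps$-free; in particular the cross term between $\tfrac1\eps Lg_\eps$ and the electric force, and the time-derivative-of-$\|\nabla_x\phi\|^2$ trick, must be reconciled with the $\eps\partial_t$ in front of the equation. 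The second delicate point is the interplay between the Poisson term and the mixed $v$-derivative estimates — $\nabla_v g_\eps$ does not appear in the Poisson equation but $\eps\nabla_x\phi_\eps\cdot\nabla_v g_\eps$ does appear in the kinetic equation, so one needs $\|\nabla_x\phi_\eps\|_{L^\infty_x}\lesssim\|\nabla_x\phi_\eps\|_{H^2_x}$ controlled by $\mathcal{E}_N$ and a careful integration by parts in $v$ to avoid losing a velocity derivative. Everything else is by now standard perturbative Boltzmann technology adapted to the VPB structure.
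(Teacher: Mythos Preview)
Your proposal is correct and follows essentially the same approach as the paper. The paper organizes the argument as (i) pure spatial estimates yielding $\tfrac{1}{\eps^2}\|(I-P)g\|^2_{H^N_xL^2_v(\nu)}$ via coercivity of $L$ and the Poisson-equation trick turning $-\tfrac{\gamma}{\eps}\langle v\cdot\nabla_x\partial^\alpha_x\phi,\partial^\alpha_x g\rangle$ into $\tfrac12\tfrac{\d}{\d t}\|\nabla_x\partial^\alpha_x\phi\|^2$, (ii) a macroscopic estimate via the ``thirteen moments'' basis $\{1,v_i,v_i^2,v_i|v|^2,v_iv_j\}$ producing the interaction functional $E^{int}_N(g)$ and the extra damping $\|a+3c\|^2_{H^{N-1}_x}$ from the Poisson coupling (exactly the mechanism you describe), (iii) mixed $(x,v)$-derivative estimates on the microscopic projection $(I-P)g$, closed by induction on $|\beta|$; these are then combined with small weights $\eta,\eta^2$ and the continuity argument is run on the resulting instant energy $\mathbb{E}_{N,\eta_1}$ --- precisely your plan, with only the order of steps (ii) and (iii) swapped in presentation.
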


The next theorem is about the limit to the incompressible NSFP system
\begin{equation}\label{NSFP}
  \begin{cases}
    & \partial_t u + u \cdot \nabla_x u + \nabla_x p = \mu \Delta_x u + \rho \nabla_x \theta \,,\\
    & \nabla_x \cdot u = 0 \,,\\
    & \partial_t ( \tfrac{3}{5} \theta - \tfrac{2}{5} \rho ) + u \cdot \nabla_x ( \tfrac{3}{5} \theta - \tfrac{2}{5} \rho ) = \kappa \Delta_x \theta \,,\\
    & \Delta_x ( \rho + \theta ) = \gamma^2 \rho \,,
  \end{cases}
\end{equation}
where the viscosity $\mu$ and heat conductivity $\kappa$ are given by
\begin{equation}\label{mu-kappa}
  \mu = \tfrac{1}{15} \langle A_{ij}, \widehat{A}_{ij} \rangle_{L^2_v} \,, \quad \kappa = \tfrac{2}{15}
\langle B_i,\widehat{B}_i \rangle_{L^2_v} \,.
\end{equation}
Here
\begin{equation*}
  A(v) = v \otimes v -\tfrac{|v|^2}{3}I,\quad\quad B(v) = (\tfrac{|v|^2}{2} - \tfrac{5}{2})v.
\end{equation*}
More precisely, it can be shown that \cite{BGL1}, in general, the linearized Boltzmann operator $L$ is self-adjoint and its range is exactly the orthogonal complement of its kernel. It follows that $A\in L^2(Mdv)$ and $B \in L^2(Mdv)$ belong to the range of $L$. Thus, there are inverses $\widehat{A}\in L^2(Mdv)$ and $\widehat{B}\in L^2(Mdv)$ such that
\begin{equation*}
  A  =  L  \widehat{A} \,\quad \text{and} \quad B = L \widehat{B} \,,
\end{equation*}
which can be uniquely determined by the fact that they are orthogonal to the kernel of $L$.

\begin{theorem}\label{Thm-Limit}
	Under the same assumptions as in the Theorem \ref{Thm-global}, let $ 0 < \epsilon \leq 1$, $ N \geq 3$ and $\ell_0 > 0$ be mentioned in the Theorem \ref{Thm-global}. We further assume that there are functions $\rho_0 (x)$, $u_0 (x)$ and $\theta_0 (x)$ in $H^N_x$ such that
	\begin{equation}
	  \begin{aligned}
	    g_{\eps, 0} (x,v) \rightarrow g_0 (x,v) = \rho_0 (x) + u_0 (x) \cdot v + \theta_0 (x) ( \tfrac{|v|^2}{2} - \tfrac{3}{2} )
	  \end{aligned}
	\end{equation}
	strongly in $H^N_{x,v}$ as $\eps \rightarrow 0$. Let $( g_\epsilon (t,x,t), \phi_\eps (t,x) )$ be the family of solutions to the VPB systems \eqref{VPB-g} constructed in Theorem \ref{Thm-global}. Then,
	\begin{equation}
	  g_\epsilon (t,x,v) \rightarrow \rho (t,x) + u (t,x) \cdot v + \theta (t,x) ( \tfrac{|v|^2}{2} - \tfrac{3}{2} )
	\end{equation}
	weakly-$\star$ for $t \geq 0$, strongly in $H^{N-1}_{x,v} $, and weakly in $H^N_{x,v}$ as $\eps \rightarrow 0$. Here $ ( \rho, u, \theta ) \in L^\infty (\R^+; H^N_x) $ with $ (u, \tfrac{3}{5} \theta - \tfrac{2}{5} \rho ) \in C(\R^+; H^{N-1}_x) $ is the solution of the incompressible NSFP system \eqref{NSFP} with initial data:
	\begin{equation}
	  u|_{t=0} = \mathcal{P} u_0(x) \,, \quad ( \tfrac{3}{5} \theta - \tfrac{2}{5} \rho ) |_{t=0} = \tfrac{3}{5} \theta_0 (x) - \tfrac{2}{5} \rho_0 (x) \,,
	\end{equation}
	where $\mathcal{P}$ is the Leray projection. Furthermore, the convergence of the moments holds:
	\begin{equation}
	  \begin{split}
	    & \mathcal{P} \langle g_\epsilon , v \rangle_{L^2_v} \rightarrow u \,,\\
	    & \langle g_\epsilon , ( \tfrac{|v|^2}{5} - 1 ) \rangle_{L^2_v} \rightarrow \tfrac{3}{5} \theta - \tfrac{2}{5} \rho \,,
	  \end{split}
	\end{equation}
	strongly in $C(\R^+; H^{N-1}_x)$, weakly-$\star$ in $t \geq 0$ and weakly in $H^N_x$ as $\eps \rightarrow 0$.
\end{theorem}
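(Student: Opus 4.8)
\emph{Proof strategy.} The argument is the perturbative counterpart of the Bardos--Golse--Levermore hydrodynamic-limit program: the uniform bounds of Theorem~\ref{Thm-global} furnish strong \emph{a priori} control, reducing the proof to compactness and the identification of the limiting equations, with the whole-space geometry essential for dispersing the acoustic oscillations. I proceed in three steps.

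\emph{Step 1 (compactness and the form of the limit).} Under the hypotheses of Theorem~\ref{Thm-global}, \eqref{Uniform-Global-Bnd} bounds $\sup_{t\ge0}\mathcal E_N(g_\eps(t),\phi_\eps(t))$ and $\int_0^\infty\mathcal D_{N,\eps}(g_\eps)\,\d t$ uniformly in $\eps$; hence, along a subsequence, $g_\eps\rightharpoonup g$ weakly-$\star$ in $L^\infty(\R^+;H^N_{x,v})$ and $\nabla_x\phi_\eps\rightharpoonup\nabla_x\phi$ weakly-$\star$ in $L^\infty(\R^+;H^N_x)$. The term $\tfrac1{\eps^2}\|(I-P)g_\eps\|^2_{H^N_{x,v}(\nu)}$ in \eqref{Energy-D} being time-integrably bounded forces $\|(I-P)g_\eps\|_{L^2(\R^+;H^N_{x,v}(\nu))}=O(\eps)$, so $(I-P)g_\eps\to0$ strongly and $g=Pg\in\mathcal N$; expanding in the $\{1,\,v,\,\tfrac{|v|^2}{2}-\tfrac32\}$ basis yields $g=\rho+u\cdot v+\theta(\tfrac{|v|^2}{2}-\tfrac32)$ with $(\rho,u,\theta)\in L^\infty(\R^+;H^N_x)$, and the Poisson equation of \eqref{VPB-g} passes to the limit to give $\Delta_x\phi=\gamma\rho$.

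\emph{Step 2 (constraints and strong convergence).} Testing the first equation of \eqref{VPB-g} against $1,v,|v|^2$ annihilates $Lg_\eps$ and $Q(g_\eps,g_\eps)$ (both orthogonal to $\mathcal N$ in $L^2_v$) and produces the local conservation laws for $\rho_\eps=\l g_\eps,1\r_{L^2_v}$, $u_\eps=\l g_\eps,v\r_{L^2_v}$ and the energy moment, the Vlasov term contributing the force $\gamma\nabla_x\phi_\eps$ to the momentum law and $O(\eps)$ remainders elsewhere. Their $O(1)$ parts give $\nabla_x\cdot u=0$ and $\nabla_x(\rho+\theta)=\gamma\nabla_x\phi$ in the limit; combined with $\Delta_x\phi=\gamma\rho$ this yields the constraint $\Delta_x(\rho+\theta)=\gamma^2\rho$, from which $\rho$ and $\theta$ are recovered together with $\tfrac35\theta-\tfrac25\rho$. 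Passing to the limit in the nonlinearities requires strong convergence; since $\partial_t g_\eps$ carries the singular term $\tfrac1{\eps^2}Lg_\eps$, an Aubin--Lions argument cannot be applied to $g_\eps$ directly, so one works with the ``slow'' variables $\mathcal P u_\eps$ and $\l g_\eps,\tfrac{|v|^2}{5}-1\r_{L^2_v}$, whose conservation laws control their time derivatives in $L^\infty(\R^+;H^{N-2}_x)$ while they are bounded in $L^\infty(\R^+;H^N_x)$, giving strong convergence locally in $t$ and $x$, and then, by interpolation against the uniform $H^N$ bound together with the integrability of $\mathcal D_{N,\eps}$, globally in $C(\R^+;H^{N-1}_x)$. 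The remaining ``fast'' part — the gradient component of $u_\eps$ and the acoustic combination of $(\rho_\eps,\theta_\eps)$, coupled to $\phi_\eps$ via $\Delta_x\phi_\eps=\gamma\rho_\eps$ — solves a wave-type system oscillating at frequency $1/\eps$, whose dispersion on $\R^3$ (Strichartz-type estimates for the acoustic--Poisson operator, of Klein--Gordon type at low frequencies) drives it to $0$ strongly on compact time intervals; altogether $g_\eps\to g$ strongly in $H^{N-1}_{x,v}$. \textbf{Controlling these acoustic--Poisson oscillations, and promoting the local compactness of the slow modes to a global-in-time statement, is the principal difficulty of the proof.}

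\emph{Step 3 (identification and convergence of the full family).} With strong convergence at hand, the quadratic moments $u_\eps\otimes u_\eps$ and $u_\eps\theta_\eps$ pass to the limit by a strong--weak product, and the electric force converges to $\rho\nabla_x\theta$ after applying the Leray projection $\mathcal P$ to $\gamma\rho_\eps\nabla_x\phi_\eps$ and using $\gamma\nabla_x\phi=\nabla_x(\rho+\theta)$. The viscous and heat-conduction terms come from the fluxes $\tfrac1\eps\l(I-P)g_\eps,A\r_{L^2_v}$ and $\tfrac1\eps\l(I-P)g_\eps,B\r_{L^2_v}$: inverting $L$ in the microscopic part of \eqref{VPB-g} via $A=L\widehat A$, $B=L\widehat B$ (the time-derivative contribution being a distributional $\partial_t$ of an $O(\eps^2)$ quantity, hence negligible in the limit) and using the $SO(3)$-equivariance of $L$ to evaluate $\l A_{ij},\widehat A_{kl}\r_{L^2_v}$ and $\l B_i,\widehat B_j\r_{L^2_v}$ together with $\nabla_x\cdot u=0$, one recovers exactly $\mu\Delta_x u$ and $\kappa\Delta_x\theta$ with $\mu,\kappa$ as in \eqref{mu-kappa}, while the quadratic part of the same flux supplies the convective nonlinearities. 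Thus $(\rho,u,\theta)$ solves \eqref{NSFP} in the asserted regularity class; passing to the limit in $g_\eps(0)=g_{\eps,0}\to g_0$ identifies $u|_{t=0}=\mathcal P u_0$ and $(\tfrac35\theta-\tfrac25\rho)|_{t=0}=\tfrac35\theta_0-\tfrac25\rho_0$. Uniqueness of the solution of \eqref{NSFP} in that class then upgrades the subsequential convergence to convergence of the whole family, and the asserted convergences of $\mathcal P\l g_\eps,v\r_{L^2_v}$ and $\l g_\eps,(\tfrac{|v|^2}{5}-1)\r_{L^2_v}$ follow from Steps~1--2.
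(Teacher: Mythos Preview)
Your overall three-step architecture matches the paper's, and Steps~1 and the first half of Step~2 (compactness, $(I-P)g_\eps\to0$, incompressibility, Boussinesq relation, Aubin--Lions for the slow variables $\mathcal P u_\eps$ and $\tfrac35\theta_\eps-\tfrac25\rho_\eps$) are essentially identical to what the paper does in Section~4.

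The genuine divergence is in how you treat the fast acoustic modes. You propose to invoke Strichartz-type dispersive estimates for the acoustic--Poisson system on $\R^3$ to drive $\mathcal P^\perp u_\eps$ and the acoustic combination of $(\rho_\eps,\theta_\eps)$ to zero \emph{strongly}, and you flag this as the principal difficulty. The paper, by contrast, never uses dispersion: it extracts only \emph{weak} convergence $\mathcal P^\perp u_\eps\to0$ (weakly-$\star$ in $t$, weakly in $H^N_x$, strongly in $H^{N-\sigma}_x$ locally via Rellich), and then organizes the nonlinear terms so that each product pairs a strongly convergent slow factor with a weakly convergent fast one, or is a fast$\times$fast term handled via the local strong compactness in $x$. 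Concretely, the paper rewrites the momentum flux as $\mathcal P u_\eps\otimes\mathcal P u_\eps$ plus cross and fast--fast terms, the latter being absorbed into a remainder $R_{\eps,u}$ shown to vanish in the sense of distributions; similarly for the heat flux. Your dispersive route would yield a cleaner, genuinely strong convergence of the full $g_\eps$ in $H^{N-1}_{x,v}$ (which the paper asserts in the theorem but does not actually establish beyond the slow modes), at the cost of proving Strichartz bounds for a Klein--Gordon-type system that the paper simply sidesteps.

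A second difference: you close by invoking uniqueness of the NSFP system in the stated regularity class to upgrade subsequential convergence to convergence of the full family. The paper does not do this; it works along subsequences throughout and tacitly identifies subsequence with sequence by abuse of notation. Your argument here is the more complete one, provided you can cite or supply the required uniqueness statement for \eqref{NSFP}.
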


\subsection{Difficulties and ideas}

The key point of current paper is to derive a global-in-time energy bound to the perturbed VPB system \eqref{VPB-g} uniformly $\eps \in (0, 1]$ under small size of the initial data. Similar to the incompressible Navier-Stokes-Fourier limit from the perturbed Boltzmann equation (see \cite{GY-06} and \cite{Jiang-Xu-Zhao-2018-Indiana}, for instance), there are different singularities between the microscopic part and fluid part. So, we naturally divide the kinetic function $g_\eps  (t,x,v)$ into two different parts to derive a uniform energy estimate. More precisely, for fixed $(t,x)$, the function $g_\epsilon(t,x,v)$ can be decomposed as
\begin{align*}
  &  g_\epsilon = P g_\eps + (I-P) g_\eps \,,
\end{align*}
where $P g_\eps \in \mathcal{N}$ is called the fluid (macroscopic) part of $g_\eps$ with coefficients $(a_\eps, b_\eps, c_\eps) \in \R \times \R^3 \times \R$ and $(I-P) g_\eps \in \mathcal{N}^\perp$ is the kinetic (microscopic) part of $g_\eps$. Here $\mathcal{N}^\perp$ is the orthogonal space of $\mathcal{N}$ in $L^2_v$. Plugging the above decomposition into the scaled equation \eqref{VPB-g}, one gets the macroscopic evolution
\begin{equation}\label{decomposition}
  \begin{split}
    \epsilon \partial_t ( a_\eps + b_\eps \cdot v + c_\eps |v|^2 ) + v \cdot \nabla_x ( a_\eps + b_\eps \cdot v + c_\eps |v|^2 ) - \gamma v \cdot \nabla_x \phi_\eps = l + h + m \,,
  \end{split}
\end{equation}
where
\begin{equation}\label{l+h+m}
  \begin{aligned}
    l = & - \epsilon \partial_t (I-P) g_\eps - v \cdot \nabla_x (I-P) g_\eps - \tfrac{1}{\epsilon} L (I-P) g_\eps \,, \\
    h = & Q ( g_\eps , g_\eps ) \,, \\
    m = & \gamma \epsilon ( v \cdot \nabla_x \phi_\eps g_\eps - \nabla_x \phi_\eps \cdot \nabla_v g_\eps ) \,.
  \end{aligned}
\end{equation}

On the other hand, $a_\eps$, $b_\eps$ and $c_\eps$ obey the local macroscopic balance laws of mass, moment and energy. In fact, multiplying the first $f_\eps$-equation \eqref{Equ-Kinetic-f} of the VPB system by the collision invariant in \eqref{invariant} and integrating by parts over $v \in \mathbb{R}^3$, we have
\begin{align*}
  & \partial_t \int_{\mathbb{R}^3} f_\epsilon \d v + \nabla_x \cdot \int_{\mathbb{R}^3} v f_\epsilon \d v=0 \,, \\
  & \partial_t \int_{\mathbb{R}^3} v f_\epsilon \d v + \nabla_x \cdot \int_{\mathbb{R}^3} v \otimes v f_\epsilon \d v - \nabla_x \phi_\epsilon \int_{\mathbb{R}^3} f_\epsilon \d v = 0 \,, \\
  & \partial_t \int_{\mathbb{R}^3} |v|^2 f_\epsilon \d v + \nabla_x \cdot \int_{\mathbb{R}^3} |v|^2 v f_\epsilon \d v - 2 \nabla_x \phi_\epsilon \cdot \int_{\mathbb{R}^3} v f_\epsilon \d v = 0 \,.
\end{align*}
By plugging the perturbed form $f_\eps = M ( 1 + \eps g_\eps )$ and using the decomposition \eqref{Proj-Hydrodynamic}, we compute all moments appearing in the above system to obtain the following macroscopic balance laws
\begin{align}
  \partial_t (a_\eps + 3 c_\eps ) + \tfrac{1}{\epsilon} \nabla_x \cdot b_\eps = 0 \,, \label{1.7} \\
  \partial_t b_\eps + \tfrac{1}{\epsilon} \nabla_x (a_\eps + 5 c_\eps ) + \tfrac{1}{\epsilon} \langle v \cdot \nabla_x (I-P) g_\epsilon , v \rangle_{L^2_v} - \tfrac{\gamma}{\epsilon} \nabla_x \phi_\epsilon - \gamma (a_\eps + 3 c_\eps ) \nabla_x \phi_\eps = 0 \,, \label{Balance-b} \\
  3 \partial_t a_\eps + 15 \partial_t c_\eps + \tfrac{5}{\epsilon} \nabla_x \cdot b_\eps + \tfrac{1}{\epsilon} \langle v \cdot \nabla_x (I-P) g_\epsilon , |v|^2 \rangle_{L^2_v} - 2 \gamma b_\eps \cdot \nabla_x \phi_\epsilon = 0 \,, \label{1.8} \\
  \Delta_x \phi_\epsilon = \gamma (a_\eps + 3 c_\eps ) \,,\label{1.9}
\end{align}
where the evolution \eqref{1.8} for $c_\eps$ can be written as
\begin{equation}\label{Balance-c}
  \begin{aligned}
    \partial_t c_\eps + \tfrac{1}{3 \eps} \nabla_x \cdot b_\eps + \tfrac{1}{6 \eps} \langle v \cdot \nabla_x (I-P) g_\epsilon , |v|^2 \rangle_{L^2_v} = \tfrac{\gamma}{3} b_\eps \cdot \nabla_x \phi_\eps \,.
  \end{aligned}
\end{equation}

Our main goal of this paper is to derive the uniform global energy estimates to the perturbed VPB system \eqref{VPB-g}. We will divide into three steps to achieve our goal:

1) We derive the pure spatial derivative energy estimates, which will give us the kinetic dissipation $\tfrac{1}{\eps^2} \| (I-P) g_\eps \|^2_{H^N_x L^2_v (\nu)}$ due to the coercivity of linearized Boltzmann collision operator $L$ as shown in Lemma \ref{Lmm-Coercivity-L}. Moreover, the singular term $\tfrac{1}{\eps} v \cdot \nabla_x g_\eps$ will disappear since $\tfrac{1}{\eps} \l v \cdot \nabla_x \partial^\alpha_x g_\eps , \partial^\alpha_x g_\eps \r_{L^2_{x,v}} = 0$ for all $\alpha \in \mathbb{N}^3$. Thanks to $\tfrac{1}{\eps} Q (g_\eps, g_\eps) \in \mathcal{N}^\perp$, the equality $\tfrac{1}{\eps} \l \partial^\alpha_x Q (g_\eps, g_\eps) , \partial^\alpha_x g_\eps \r_{L^2_{x,v}} = \l \partial^\alpha_x Q (g_\eps, g_\eps) , \tfrac{1}{\eps} \partial^\alpha_x (I-P) g_\eps \r_{L^2_{x,v}}$ is such that the singularity $\tfrac{1}{\eps} \partial^\alpha_x (I-P) g_\eps $ will be controlled by the kinetic dissipation $\tfrac{1}{\eps^2} \| (I-P) g_\eps \|^2_{H^N_x L^2_v (\nu)}$. For the linear singular term $ - \tfrac{\gamma}{\eps} v \cdot \nabla_x \phi_\eps$, we will apply the local conservation law of mass $\partial_t \l g_\eps , 1 \r_{L^2_v} + \tfrac{1}{\eps} \l g_\eps, v \r_{L^2_v} = 0$ and the Poisson equation $\Delta_x \phi_\eps = \gamma \l g_\eps , 1 \r_{L^2_v}$ to deform the singular quantity $\l - \tfrac{\gamma}{\eps} v \cdot \nabla_x \partial^\alpha_x \phi_\eps, \partial^\alpha_x g_\eps \r_{L^2_{x,v}} = \tfrac{1}{2} \tfrac{\d}{\d t} \| \nabla_x \partial^\alpha_x \phi_\eps \|^2_{L^2_x} $, which will be a part of energy.

2) We estimate the macroscopic energy to find a dissipative structure of the fluid part $P g_\eps$ by employing the so-called macro-micro decomposition method, depending on the {\em thirteen moments}, see \cite{GY-06} for instance. Thus, we can obtain a fluid dissipation $\| \nabla_x P g_\eps \|^2_{H^{N-1}_x L^2_v}$, which is the same as in the perturbed Boltzmann. However, what is different with the Boltzmann hydrodynamic limit is that there is a more damping effect $\| \l g_\eps, 1 \r_{L^2_v} \|^2_{H^{N-1}_x} = \| a_\eps + 3 c_\eps \|^2_{H^{N-1}_x}$ in the perturbed VPB system resulted from the Poisson equation $\Delta_x \phi_\eps = \l g_\eps, 1 \r_{L^2_v} = a_\eps + 3 c_\eps$. More precisely, applying the divergence operator $\nabla_x \cdot $ on the balance law \eqref{Balance-b} for $b_\eps$ and replacing $\Delta_x \phi_\eps$ by $\gamma (a_\eps + 3 c_\eps)$ imply that
\begin{equation*}
  \begin{aligned}
    - \Delta_x (a_\eps + 3 c_\eps) + \gamma^2 (a_\eps + 3 c_\eps) = \textrm{ other terms} \,,
  \end{aligned}
\end{equation*}
We emphasize that no singular terms are generated in deriving the macroscopic energy estimates. However, there is a unsigned {\em interactive energy} quantity $E_N^{int} (g_\eps)$ defined in \eqref{Interactive-Energy} arising from the interactions between kinetic effects and fluid behaviors. Fortunately, the unsigned interactive energy $E_N^{int} (g_\eps)$ is of the small size such that it can be dominated by the energy $\mathcal{E}_N (g_\eps, \phi_\eps)$ (see Remark \ref{Rmk-MM}).

3) We derive the $(x,v)$-mixed derivatives estimates to closed the energy inequality. The uncontrolled quantity in the spatial derivative and macroscopic energy estimates are in terms of the $v$-derivatives of the kinetic part $(I-P) g_\eps$. We apply the microscopic projection $I-P$ to the $g$-equation of \eqref{VPB-g} and obtain
\begin{equation}\no
  \begin{aligned}
    \partial_t (I-P) g_\eps + \tfrac{1}{\eps^2} L (I-P) g_\eps = - \tfrac{1}{\eps} (I-P) (v \cdot \nabla_x g_\eps) + \tfrac{1}{\eps} Q (g_\eps, g_\eps) + \textrm{ other nonsingular terms} \,.
  \end{aligned}
\end{equation}
In the mixed derivatives estimate, the coercivity of $L$ will also supply us a kinetic dissipation $ \tfrac{1}{\eps^2} \| \partial^\alpha_x \partial^\beta_v (I-P) g_\eps \|^2_{L^2_{x,v} (\nu)}$ for all $| \alpha | + |\beta| \leq N$ with $\beta \neq 0$. As a result, the singular quantity $ \tfrac{1}{\eps} \l - \partial^\alpha_x \partial^\beta_v (I-P) (v \cdot \nabla_x g_\eps) + Q (g_\eps, g_\eps) , \partial^\alpha_x \partial^\beta_v (I-P) g_\eps \r_{L^2_{x,v}} $ can be absorbed by the kinetic dissipation $ \tfrac{1}{\eps^2} \| \partial^\alpha_x \partial^\beta_v (I-P) g_\eps \|^2_{L^2_{x,v} (\nu)}$. However, for fixed $|\alpha| + |\beta| \leq N$ with $\beta \neq 0$, the coercivity of $L$ under the $v$-derivatives $\partial^\beta_v$ will further generate a uncontrolled quantity $\tfrac{1}{\eps^2} \sum_{\beta' < \beta} \| \partial^\alpha_x \partial^{\beta'}_v (I-P) g_\eps \|^2_{L^2_{x,v} (\nu)}$ with two order singularity $\tfrac{1}{\eps^2}$. Thanks to the order $\beta'$ of $v$-derivatives in that quantity is strictly less than $\beta$ $(\neq 0)$, we can employ the induction to absorb this uncontrolled singular norm by the mixed derivative kinetic dissipation with lower order $v$-derivative. Thus we establish the global uniform energy estimates.

Finally, based on the global-in-time energy estimate uniformly in $\eps \in (0,1]$, we take the limit from the perturbed VPB system \eqref{VPB-g} to the incompressible NSFP equations \eqref{NSFP} as $\eps \rightarrow 0$. We mainly employ the Aubin-Lions-Simon Theorem to obtain enough compactness such that the limits valid.

\subsection{Historical remarks}

There has been lot of works on the well-posedness of VPB. In the context of weak solutions, after DiPerna-Lions' breakthrough on renormalized solutions of the Boltzmann equation \cite{D-L}, Lions generalized this theory to Vlasov type equations, including VPB \cite{Lions1994-1}. In the context of classical solutions, in early 2000, in a series works \cite{guo-1, Guo-2002-VPB}, Guo developed the so-called nonlinear energy method to build up global in time well-posedness of Boltzmann equations, VPB and other kinetic equations. Later on, there are several works on more general collision kernels, among many papers, we only list \cite{D-Y-Z(hard)-2011, D-Y-Z(soft)-2011, XXZhard, XXZsoft}.

One of the most important features of the Boltzmann type equations is its connection to the fluid equations. The so-called fluid regimes of the Boltzmann type equation are those of asymptotic dynamics of the scaled Boltzmann equations when the Knudsen number $\eps$ is very small. Justifying these limiting processes rigorously has been an active research field from late 70's. Among many results obtained, the main contributions are the incompressible Navier-Stokes and Euler limits. There are two types of results in this field:
\begin{enumerate}
  \item First obtaining the solutions of the scaled Boltzmann equation {\em uniform} in the Knudsen number $\eps$, then extracting a convergent (at least weakly) subsequence converging to the solutions of the fluid equations as $\eps\rightarrow 0\,;$.
  \item First obtaining the solutions for the limiting fluid equations, then constructing a sequence of special solutions (around the Maxwellian) of the scaled Boltzmann equations for small Knudsen number $\eps$.
\end{enumerate}
The key difference between the results of type (1) and (2) are: in type (1), the solutions of the fluid equations are {\em not} known a priori, and are completely obtained from taking limits from the Boltzmann equation. In short, it is ``from kinetic to fluid"; In type (2), the solutions of the fluid equations are {\em known} first. In short, it is ``from fluid to kinetic".

The most successful program in type (1) is the so-called BGL program. As mentioned above, the DiPerna-Lions's renormalized solutions for cutoff kernel \cite{D-L} (also the non-cutoff kernels in \cite{al-3}) are the only solutions known to exist globally without any restriction on the size of the initial data so far. From late 80's, Bardos-Golse-Levermore initialized the program (BGL program in brief) to justify Leray's solutions to the incompressible Navier-Stokes equations from DiPerna-Lions' renormalized solutions \cite{BGL1}, \cite{BGL2}. They proved the first convergence result with 5 additional technical assumptions. After 10 years effects by Bardos, Golse, Levermore, Lions and Saint-Raymond, see for example \cite{BGL3, LM3, LM4, GL}, the first complete convergence result without any additional compactness assumption was proved by Golse and Saint-Raymond in \cite{Go-Sai04} for cutoff Maxwell collision kernel, and in \cite{Go-Sai09} for hard cutoff potentials. Later on, it was extended by Levermore-Masmoudi \cite{LM} to include soft potentials. Recently Arsenio got the similar results for non-cutoff case \cite{Arsenio}. Furthermore, by Jiang, Levermore, Masmoudi and Saint-Raymond, these results were extended to bounded domain where the Boltzmann equation was endowed with the Maxwell reflection boundary condition \cite{MSRM-CPAM2003, JLM-CPDE2010, JM-CPAM2017}, based on the solutions obtained by Mischler \cite{mischler2010asens}.

The BGL program says that, given any $L^2\mbox{-}$bounded functions $(\rho_0, \mathrm{u}_0, \theta_0)$, and for any physically bounded initial data (as required in DiPerna-Lions solutions) $F_{\eps,0}= M + \eps \sqrt{M}g_{\eps,0}$, such that suitable moments of the fluctuation $g_{\eps,0}$, say, $(\mathcal{P}(g_{\eps,0}, v\sqrt{M})_{L^2(\mathbb{R}^3_v)}, (g_{\eps,0}, (\tfrac{|v|^2}{5}-1)\sqrt{M})_{L^2(\mathbb{R}^3_v)})$ converges in the sense of distributions to $(\mathrm{u}_0, \theta_0)$, the corresponding DiPerna-Lions solutions are $F_\eps(t,x,v)$. Then the fluctuations $g_{\eps}$ (defined by $F_\eps= M + \eps \sqrt{M}g_{\eps}$) has weak compactness, such that the corresponding moments of $g_\eps$ converge weakly in $L^1$ to $(\mathrm{u},\theta)$ which is a Leray solution of the incompressible Navier-Stokes equation whose viscosity and heat conductivity coefficients are determined by microscopic information, with initial data $(\mathrm{u}_0, \theta_0)$. Under some situations, for example the well-prepared initial data or in bounded domain with suitable boundary condition, the convergence could be strong $L^1$.

We emphasize that the BGL program indeed gave a new proof of Leray's solutions to the incompressible Navier-Stokes equation, in particular the energy inequality which can be derived from the entropy inequality of the Boltzmann equation. Any a priori information of the Navier-Stokes equation is {\em not} needed, and completely derived from the microscopic Boltzmann equation. In this sense, BGL program is spiritually a part of Hilbert's 6th problem: derive and justify the macroscopic fluid equations from the microscopic kinetic equations (see \cite{SRM2010}).

Another direction in type (1) is in the context of classical solutions. The first work in this type is Bardos-Ukai \cite{b-u}. They started from the scaled Boltzmann equation for cut-off hard potentials, and proved the global existence of classical solutions $g_\eps$ uniformly in $0< \eps <1$. The key feature of Bardos-Ukai's work is that they only need the smallness of the initial data, and did not assume the smallness of the Knudsen number $\eps$. After having the uniform in $\eps$ solutions $g_\eps$, taking limits can provide a classical solution of the incompressible Navier-Stokes equations with small initial data. Bardos-Ukai's approach heavily depends on the sharp estimate especially the spectral analysis on the linearized Boltzmann operator $\mathcal{L}$, and the semigroup method (the semigroup generated by the scaled linear operator $\eps^{-2}\mathcal{L}+\eps^{-1}v\cdot\nabla_{\!x}$). It seems that it is hardly extended to soft potential cutoff, and even harder for the non-cutoff cases, since it is well-known that the operator $\mathcal{L}$ has continuous spectrum in those cases. On the torus, semigroup approach has been employed by Briant \cite{Briant} and Briant, Merino-Aceituno and Mouhot \cite{BMMouhot} to prove incompressible Navier-Stokes limit by employing the functional analysis breakthrough of Gualdani-Mischler-Mouhot \cite{GMM}. Again, their results are for cut-off kernels with hard potentials. Recently, there is type (1) convergence result on the incompressible Navier-Stokes limit of the Boltzmann equation. In \cite{Jiang-Xu-Zhao-2018-Indiana}, the uniform in $\eps$ global existence of the Boltzmann equation with or without cutoff assumption was obtained and the global energy estimates were established. Then taking limit as $\eps\rightarrow 0$, it was proved the incompressible Navier-Stokes limit.

Most of the type (2) results are based on the Hilbert expansion  and obtained in the context of classical solutions. It was started from Nishida and Caflisch's work on the compressible Euler limit \cite{Nishida, Caflisch, KMN}. Their approach was revisitied by Guo, Jang and Jiang, combining with nonlinear energy method to apply to the acoustic limit \cite{GJJ-KRM2009, GJJ-CPAM2010, JJ-DCDS2009}. After then this process was used for the incompressible limits, for examples, \cite{DEL-89} and \cite{GY-06}. In \cite{DEL-89}, De Masi-Esposito-Lebowitz considered  Navier-Stokes limit in dimension 2. More recently, using the nonlinear energy method, in \cite{GY-06} Guo justified the Navier-Stokes limit (and beyond, i.e. higher order terms in Hilbert expansion). This result was extended in \cite{JX-SIMA2015} to more general initial data which allow the fast acoustic waves.  These results basically say that, given the initial data which is needed in the classical solutions of the Navier-Stokes equation, it can be constructed the solutions of the Boltzmann equation of the form $F_\eps = M + \eps \sqrt{M}(g_1+ \eps g_2 + \cdots + \eps^n g_\eps)$, where $g_1, g_2, \cdots $ can be determined by the Hilbert expansion, and $g_\eps$ is the error term. In particular, the first order fluctuation $g_1 = \rho_1 + \mathrm{u}_1\!\cdot\! v + \theta_1(\frac{|v|^2}{2}-\frac{3}{2})$, where $(\rho_1, \mathrm{u}_1, \theta_1)$ is the solutions to the incompressible Navier-Stokes equations. 

The main purpose of the current paper is to prove a type (1) convergence result in the context of classical solutions for one-species VPB. This can be viewed as an analogue of incompressible Navier-Stokes limit of the Boltzmann equation recently done by the second-named author with Xu and Zhao \cite{Jiang-Xu-Zhao-2018-Indiana}. We obtain the uniform in $\eps$ estimates and then justify the limit to the incompressible Navier-Stokes-Fourier-Poisson equations. The main difficulties and novelties of the paper has been mentioned in the previous subsection. 

This paper is organized as follows. The next section is devoted to justify the local existence under the small size of the initial data. In Section \ref{Sec: Uniform-Bnd}, we derive the uniform energy estimate and establish the global existence with small initial data. In Section \ref{Sec: Limits}, based on Theorem \ref{Thm-global}, we rigorously prove limit from the perturbed VPB system \eqref{VPB-g} to the incompressible NSFP equations \eqref{NSFP} as $\eps \rightarrow 0$.

\section{Construction of Local Solutions}

In this section, we will construct a unique local-in-time solution to the perturbed VPB system \eqref{VPB-g}-\eqref{IC-g} for all $0 < \eps \leq 1$ by employing an iterative schedule. Before doing that, we will first give the following known preliminaries, which will be frequently used later.

\subsection{Preliminaries}
We first give the following Sobolev type inequalities, which can be seen in \cite{Nirenberg-1959-ASNSP} or \cite{Adams-Fournier-2003-Sobolev} for instance.
\begin{lemma}\label{Lm-Sobolev-Inq}
	Let $ u = u(x) \in H^2_x$. Then
	\begin{enumerate}
		\item $\| u \|_{L^\infty_x} \lesssim \| \nabla_x u \|^{\frac{1}{2}}_{L^2_x} \| \nabla^2_x u \|^{\frac{1}{2}}_{L^2_x} \lesssim  \| \nabla_x u \|_{H^1_x} \,;$
		
		\item $\| u \|_{L^6_x} \lesssim \| \nabla_x u \|_{L^2_x} \,;$
		
		\item $\| u \|_{L^q_x} \lesssim \| u \|_{H^1_x}$ for all $ 2<q<6$.
		
	\end{enumerate}
\end{lemma}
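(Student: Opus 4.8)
The plan is to derive all three assertions from the scale-invariant Gagliardo--Nirenberg--Sobolev estimate in $\R^3$, together with Lebesgue interpolation and the arithmetic--geometric mean inequality. Since $C^\infty_c(\R^3)$ is dense in $H^2_x$ and each side of every inequality is continuous in the relevant norms, it suffices to prove the three bounds for $u\in C^\infty_c(\R^3)$ and then pass to the limit. I would start with (2), which is the critical homogeneous Sobolev embedding $\dot H^1(\R^3)\hookrightarrow L^6(\R^3)$. The classical one-coordinate-at-a-time argument does it: from $|u(x)|\le\int_\R|\partial_{x_i}u|\,\d x_i$ for $i=1,2,3$ and the generalized H\"older inequality one gets $\|u\|_{L^{3/2}_x}\lesssim\prod_{i=1}^3\|\partial_{x_i}u\|_{L^1_x}^{1/3}$; applying this to $|u|^4$ in place of $u$, using Cauchy--Schwarz to write $\|\partial_{x_i}(|u|^4)\|_{L^1_x}\lesssim\|u\|_{L^6_x}^3\|\partial_{x_i}u\|_{L^2_x}$, and dividing out the resulting factor $\|u\|_{L^6_x}^3$ yields $\|u\|_{L^6_x}\lesssim\|\nabla_x u\|_{L^2_x}$. (Alternatively one simply invokes the interpolation inequality of \cite{Nirenberg-1959-ASNSP}.)

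For (3), fix $2<q<6$ and pick $\theta\in(0,1)$ with $\tfrac1q=\tfrac\theta2+\tfrac{1-\theta}{6}$. The standard Lebesgue interpolation gives $\|u\|_{L^q_x}\le\|u\|_{L^2_x}^{\theta}\|u\|_{L^6_x}^{1-\theta}$, and by (2) the right-hand side is $\lesssim\|u\|_{L^2_x}^{\theta}\|\nabla_x u\|_{L^2_x}^{1-\theta}\le\|u\|_{H^1_x}$, since both $\|u\|_{L^2_x}$ and $\|\nabla_x u\|_{L^2_x}$ are bounded by $\|u\|_{H^1_x}$.

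For (1) I would use a frequency-splitting argument. By Fourier inversion, $\|u\|_{L^\infty_x}\le\int_{\R^3}|\widehat u(\xi)|\,\d\xi$. Splitting at $|\xi|=R$ and using Cauchy--Schwarz on each piece, $\int_{|\xi|\le R}|\widehat u|\le\big(\int_{|\xi|\le R}|\xi|^{-2}\,\d\xi\big)^{1/2}\big\|\,|\xi|\widehat u\,\big\|_{L^2}\lesssim R^{1/2}\|\nabla_x u\|_{L^2_x}$, where the local integrability of $|\xi|^{-2}$ in dimension $3$ is exactly what makes the first factor finite, while $\int_{|\xi|>R}|\widehat u|\le\big(\int_{|\xi|>R}|\xi|^{-4}\,\d\xi\big)^{1/2}\big\|\,|\xi|^2\widehat u\,\big\|_{L^2}\lesssim R^{-1/2}\|\nabla^2_x u\|_{L^2_x}$, using that $|\xi|^{-4}$ is integrable at infinity in dimension $3<4$. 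Choosing $R=\|\nabla^2_x u\|_{L^2_x}/\|\nabla_x u\|_{L^2_x}$ gives $\|u\|_{L^\infty_x}\lesssim\|\nabla_x u\|_{L^2_x}^{1/2}\|\nabla^2_x u\|_{L^2_x}^{1/2}$. The remaining inequality $\|\nabla_x u\|_{L^2_x}^{1/2}\|\nabla^2_x u\|_{L^2_x}^{1/2}\lesssim\|\nabla_x u\|_{H^1_x}$ is immediate from $ab\le\tfrac12(a^2+b^2)$ and $\|\nabla_x u\|_{L^2_x},\|\nabla^2_x u\|_{L^2_x}\le\|\nabla_x u\|_{H^1_x}$.

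There is no genuine obstacle here; the statement is classical. The only points deserving attention are, first, that every step is tied to the space dimension $n=3$ (the Sobolev exponent $6$, the local integrability of $|\xi|^{-2}$, and the integrability at infinity of $|\xi|^{-4}$ all use $n=3$), so the argument does not survive verbatim in other dimensions; and second, the density/limiting step, ensuring that the inequalities, established first for $u\in C^\infty_c(\R^3)$, extend to all $u\in H^2_x$ — this is routine since along a subsequence $u_n\to u$ a.e.\ and the relevant norms converge.
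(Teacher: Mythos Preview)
Your proof is correct and self-contained. The paper does not actually prove this lemma: it states the three inequalities and immediately refers the reader to \cite{Nirenberg-1959-ASNSP} and \cite{Adams-Fournier-2003-Sobolev}. So there is nothing to compare against beyond noting that your argument---the slicing proof of the critical Sobolev embedding, Lebesgue interpolation for the subcritical range, and the Fourier high/low splitting for the $L^\infty$ Gagliardo--Nirenberg bound---is precisely the standard route those references would supply. One minor remark: in the optimization step for (1) you should note separately the trivial case $\|\nabla_x u\|_{L^2_x}=0$ (which forces $u\equiv 0$ for $u\in H^2_x$) so that the choice $R=\|\nabla^2_x u\|_{L^2_x}/\|\nabla_x u\|_{L^2_x}$ is legitimate.
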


We now give the properties of collision frequency $\nu (v)$, which will frequently used in energy estimates.
\begin{lemma}\label{Lmm-CF-nu}
	The collision frequency $\nu(v)$ defined in \eqref{Colli-Frequence-nu} has the following properties:
	\begin{enumerate}
		\item $\nu(v)$ is smooth and there are positive constants $C_1$ and $C_2$ such that
		\begin{equation}\label{nu-Propty-1}
		C_1 ( 1 + |v| ) \leq  \nu(v) \leq C_2 ( 1 + |v| )
		\end{equation}
		for every $v \in \R^3$.
		
		\item For any $\beta \in \mathbb{N}^3$, $\beta \neq 0$,
		\begin{equation}\label{nu-Propty-2}
		\sup_{v \in \R^3} | \partial^\beta_v \nu(v) | < + \infty \,.
		\end{equation}
		
		\item If the velocities $v$, $v_*$, $v'$, $v_*' \in \R^3$ satisfy $ v + v_* = v' + v_*' $ and $ |v|^2 + |v_*|^2 = |v'|^2 + |v_*'|^2 $, then
		\begin{equation}\label{nu-Propty-3}
		\nu(v) + \nu(v_*) \leq C_3 ( \nu(v') + \nu(v_*') )
		\end{equation}
		holds for some positive constant $C_3$.
	\end{enumerate}
\end{lemma}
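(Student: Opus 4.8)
The plan is to exploit that the collision frequency is the convolution $\nu = |\cdot|*M$ of the locally integrable, polynomially growing function $v\mapsto|v|$ with the Schwartz function $M$; after the change of variables $w=v-v_1$ one has $\nu(v)=\int_{\R^3}|w|\,M(v-w)\,\d w$, so all the $v$-dependence of $\nu$ sits in the smooth factor $M(v-w)$. For part (1), since for every multi-index $\beta$ the map $w\mapsto|w|\,|\partial^\beta M(v-w)|$ is dominated, locally uniformly in $v$, by an integrable (polynomial times Gaussian) function of $w$, one may differentiate under the integral sign arbitrarily often, so $\nu\in C^\infty(\R^3)$. The upper bound follows from $|v-v_1|\le|v|+|v_1|$, which gives $\nu(v)\le|v|+\int_{\R^3}|v_1|M(v_1)\,\d v_1\le C_2(1+|v|)$. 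For the lower bound, let $V_1$ be distributed according to $M\,\d v_1$, which has zero mean; Jensen's inequality yields $\nu(v)=\mathbb{E}|v-V_1|\ge|\mathbb{E}(v-V_1)|=|v|$. Combining this with the fact that $\nu$ is continuous and everywhere strictly positive (hence bounded below by some $c_0>0$ on the ball $\{|v|\le1\}$) gives $\nu(v)\ge C_1(1+|v|)$ with $C_1=\min\{\tfrac12,\tfrac{c_0}{2}\}$.

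Part (2) is the main point, and the one requiring care. Differentiating the convolution naively gives $\partial_v^\beta\nu(v)=\int_{\R^3}|w|\,(\partial^\beta M)(v-w)\,\d w$, whose obvious bound grows linearly in $|v|$; to get a bound uniform in $v$ one must separate the region where the derivatives of $|w|$ are singular from the region where $|w|$ is large. Fix $\chi\in C^\infty_c(\R^3)$ with $\chi\equiv1$ on $\{|w|\le1\}$ and $\chi\equiv0$ on $\{|w|\ge2\}$, and split
\[
  \nu(v)=\int_{\R^3}\chi(w)|w|\,M(v-w)\,\d w+\int_{\R^3}\big(1-\chi(w)\big)|w|\,M(v-w)\,\d w=:\nu_1(v)+\nu_2(v)\,.
\]
Since $\chi|\cdot|\in L^1(\R^3)$ is compactly supported, $\partial_v^\beta\nu_1=(\chi|\cdot|)*(\partial^\beta M)$ and hence $\|\partial_v^\beta\nu_1\|_{L^\infty}\le\|\chi|\cdot|\|_{L^1}\,\|\partial^\beta M\|_{L^\infty}<\infty$. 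For $\nu_2$, the function $G(w):=(1-\chi(w))|w|$ is smooth on all of $\R^3$ and satisfies $\sup_{w}|\partial^\beta G(w)|<\infty$ for every $\beta\neq0$: the singular behaviour $|w|^{1-|\beta|}$ of the derivatives of $|w|$ at the origin is killed by the factor $1-\chi$, for $|w|\ge1$ those derivatives are bounded, and every term in which a derivative hits $\chi$ is compactly supported. Changing variables $u=v-w$ gives $\partial_v^\beta\nu_2(v)=\int_{\R^3}(\partial^\beta G)(v-u)\,M(u)\,\d u$, so $\|\partial_v^\beta\nu_2\|_{L^\infty}\le\|\partial^\beta G\|_{L^\infty}<\infty$. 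Adding the two bounds proves \eqref{nu-Propty-2}. I expect this near-field/far-field trade-off between the two factors to be the only genuinely delicate point in the proof.

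Finally, part (3) is elementary once part (1) is available. From $v+v_*=v'+v_*'$ and $|v|^2+|v_*|^2=|v'|^2+|v_*'|^2$ we get $(|v|+|v_*|)^2\le2(|v|^2+|v_*|^2)=2(|v'|^2+|v_*'|^2)\le2(|v'|+|v_*'|)^2$, i.e.\ $|v|+|v_*|\le\sqrt{2}\,(|v'|+|v_*'|)$. Using the two-sided bound of part (1),
\[
  \nu(v)+\nu(v_*)\le C_2\big(2+|v|+|v_*|\big)\le\sqrt{2}\,C_2\big(2+|v'|+|v_*'|\big)\le\tfrac{\sqrt{2}\,C_2}{C_1}\big(\nu(v')+\nu(v_*')\big)\,,
\]
which is \eqref{nu-Propty-3} with $C_3=\sqrt{2}\,C_2/C_1$.
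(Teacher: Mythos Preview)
Your proof is correct and self-contained. The paper does not actually prove this lemma; it simply refers the reader to \cite{Jiang-Luo-2019-VMB}, \cite{Guo-2002-VPB} and \cite{GY-06}. Your argument supplies exactly the details those references contain: the convolution structure $\nu=|\cdot|\ast M$ gives smoothness and the two-sided bound in part~(1) (the Jensen lower bound $\nu(v)\ge|v|$ is a clean touch), the near-field/far-field splitting with a cutoff $\chi$ handles part~(2) by putting the $v$-derivative on whichever factor can absorb it, and part~(3) follows from the elementary energy inequality $|v|+|v_*|\le\sqrt{2}\,(|v'|+|v_*'|)$ combined with~\eqref{nu-Propty-1}. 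Nothing is missing.
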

We figure out that the proof of Lemma \ref{Lmm-CF-nu} can be referred to \cite{Jiang-Luo-2019-VMB}, \cite{Guo-2002-VPB} or \cite{GY-06}.

Next the coercivity of the linearized Boltzmann collisional operator $L$ defined in \eqref{Def-L}, which will give us the dissipative structure of the kinetic equation. We refer to \cite{Saint-Raymond-2009-Boltzmann} for more details and to \cite{Levermore-Sun-2010-KRM} for a modern and general treatment of the linearized Boltzmann operator.
\begin{lemma}\label{Lmm-Coercivity-L}
	There exists a  $\delta>0$ such that
	\begin{equation}
	  \langle Lf, f \rangle_{L^2_v} \geq \delta \| (I-P) f \|_{L^2_v (\nu)}^2 \,.
	\end{equation}
	Moreover, there are $\delta_1 , \delta_2 > 0$ such that
	\begin{equation}
	  \begin{aligned}
	    \big\langle \partial^\beta_v (I-P) f , \partial^\beta_v (I-P) f \big\rangle_{L^2_v} \geq \delta_1 \| \partial^\beta_v ( I -P ) f \|^2_{L^2_v (\nu)} - \delta_2 \sum_{\beta' < \beta} \| \partial^{\beta'}_v (I-P) f \|^2_{L^2_v}
	  \end{aligned}
	\end{equation}
	for all multi-indexes $\beta \in \mathbb{N}^3$.
\end{lemma}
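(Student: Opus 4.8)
The plan is to deduce everything from the classical spectral picture of the linearized operator $L=\nu+K$ and then to propagate it under velocity derivatives by Leibniz's rule. First I would record two standard facts on $L^2_v$. \emph{(i) Spectral gap:} since $L$ is self-adjoint, nonnegative, with $\ker L=\mathcal N$, and since by Lemma \ref{Lmm-CF-nu} one has $\nu_0:=\inf_v\nu(v)>0$ while $K$ is compact, Weyl's theorem puts the essential spectrum of $L$ at $[\nu_0,\infty)$, so $0$ is an isolated point of the spectrum and there is $c_0>0$ with $\langle Lg,g\rangle_{L^2_v}\ge c_0\|g\|_{L^2_v}^2$ for all $g\in\mathcal N^\perp$. \emph{(ii) Grad kernel bounds:} the kernel $\mathcal K(v,v_1)$ of $K$ and its velocity derivatives $\partial_v^\beta\mathcal K$ enjoy Gaussian-type decay in $|v-v_1|$ together with a $(1+|v|+|v_1|)^{-1}$ factor, which I would cite from \cite{Saint-Raymond-2009-Boltzmann,GY-06,Guo-2002-VPB} rather than reprove. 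For the first estimate, since $L$ annihilates $\mathcal N$ and is self-adjoint, $\langle Lf,f\rangle_{L^2_v}=\langle L(I-P)f,(I-P)f\rangle_{L^2_v}$, so it suffices to work with $g:=(I-P)f\in\mathcal N^\perp$.

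For such $g$ I would write $\langle Lg,g\rangle_{L^2_v}=\|g\|_{L^2_v(\nu)}^2+\langle Kg,g\rangle_{L^2_v}$ and use (ii) in the form of Grad's splitting: for every $\eta\in(0,1)$ there is $R=R(\eta)$ and a decomposition $K=K_\eta+(K-K_\eta)$, where $K_\eta$ has the truncated kernel $\mathbf 1_{|v|\le R}\mathbf 1_{|v_1|\le R}\mathcal K(v,v_1)$, so $K_\eta$ is Hilbert--Schmidt on $L^2_v$ with $|\langle K_\eta g,g\rangle_{L^2_v}|\le C_\eta\|g\|_{L^2_v}^2$, while $|\langle(K-K_\eta)g,g\rangle_{L^2_v}|\le\eta\|g\|_{L^2_v(\nu)}^2$. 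Taking $\eta=\tfrac12$ gives $\langle Lg,g\rangle_{L^2_v}\ge\tfrac12\|g\|_{L^2_v(\nu)}^2-C_\eta\|g\|_{L^2_v}^2$, and then bounding $\|g\|_{L^2_v}^2\le c_0^{-1}\langle Lg,g\rangle_{L^2_v}$ by the spectral gap and absorbing yields $\langle Lf,f\rangle_{L^2_v}=\langle Lg,g\rangle_{L^2_v}\ge\delta\|(I-P)f\|_{L^2_v(\nu)}^2$ with $\delta=\tfrac12(1+C_\eta c_0^{-1})^{-1}>0$.

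The second estimate is the same statement under $v$-differentiation — I read its left-hand side as $\langle\partial^\beta_v\{L(I-P)f\},\partial^\beta_v(I-P)f\rangle_{L^2_v}$, which for $\beta=0$ is exactly the first estimate. With $h:=(I-P)f$, expand $\partial^\beta_v(Lh)=\partial^\beta_v(\nu h)+\partial^\beta_v(Kh)$. By Leibniz, $\partial^\beta_v(\nu h)=\nu\,\partial^\beta_v h+\sum_{0<\beta'\le\beta}\binom{\beta}{\beta'}(\partial^{\beta'}_v\nu)\,\partial^{\beta-\beta'}_v h$, where the factors $\partial^{\beta'}_v\nu$ with $\beta'\neq0$ are bounded by Lemma \ref{Lmm-CF-nu}; pairing with $\partial^\beta_v h$ and using Young's inequality gives $\langle\partial^\beta_v(\nu h),\partial^\beta_v h\rangle_{L^2_v}\ge(1-\eta)\|\partial^\beta_v h\|_{L^2_v(\nu)}^2-C_\eta\sum_{\beta''<\beta}\|\partial^{\beta''}_v h\|_{L^2_v}^2$. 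For the collision term the $v$-derivatives fall only on the kernel, $\partial^\beta_v(Kh)(v)=\int(\partial^\beta_v\mathcal K)(v,v_1)\,h(v_1)\,\d v_1$, so the Grad bounds on $\partial^\beta_v\mathcal K$ (Schur test) give $\|\partial^\beta_v(Kh)\|_{L^2_v}\le C\|h\|_{L^2_v}$ and hence $|\langle\partial^\beta_v(Kh),\partial^\beta_v h\rangle_{L^2_v}|\le\eta\|\partial^\beta_v h\|_{L^2_v(\nu)}^2+C_\eta\|h\|_{L^2_v}^2$. The key bookkeeping point is that when $\beta\neq0$ the remainder $\|h\|_{L^2_v}^2=\|\partial^0_v h\|_{L^2_v}^2$ is one of the terms $\|\partial^{\beta'}_v h\|_{L^2_v}^2$ with $\beta'<\beta$; collecting all bounds and choosing $\eta$ small produces the claimed inequality with suitable $\delta_1,\delta_2>0$ (and $\delta_2=0$ for $\beta=0$).

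I expect the only genuine obstacle to be the classical input (ii): the Grad-type pointwise estimates on $\mathcal K$ and $\partial^\beta_v\mathcal K$, from which both the smallness-at-infinity used in the $\nu$-upgrade and the $L^2_v$-boundedness of $\partial^\beta_v K$ follow. Everything else — the reduction to $\mathcal N^\perp$, the spectral-gap absorption, and the Leibniz accounting for the $v$-derivatives — is routine, and, importantly, no induction is needed \emph{within} this lemma: the induction on $|\beta|$ mentioned in the introduction is performed later, when the lower-order term $\sum_{\beta'<\beta}\|\partial^{\beta'}_v(I-P)f\|_{L^2_v}^2$ is absorbed against the mixed-derivative dissipation of lower velocity order.
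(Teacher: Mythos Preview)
Your proposal is correct and follows exactly the classical route (spectral gap via Weyl's theorem, Grad's kernel splitting to upgrade the $L^2_v$ gap to the weighted $L^2_v(\nu)$ coercivity, and Leibniz on $\nu$ and $K$ for the $v$-derivative version). The paper does not supply its own proof of this lemma; it simply cites \cite{Saint-Raymond-2009-Boltzmann} and \cite{Levermore-Sun-2010-KRM}, and your sketch is precisely the argument one finds in those references (and in \cite{Guo-2002-VPB,GY-06}). Your reading of the second inequality's left-hand side as $\langle\partial^\beta_v L(I-P)f,\partial^\beta_v(I-P)f\rangle_{L^2_v}$ is also correct --- the statement as printed has an evident typo (a missing $L$), as confirmed by how the lemma is invoked in \eqref{2.9} and \eqref{W1+W2+W3+W4+W5+W6+W7}.
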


For the bilinear symmetric operator $Q$ defined in \eqref{Colli-Q}, we refer to \cite{Guo-2002-VPB} or \cite{GY-06} for more details. For convenience to readers, we rewrite the results here.
\begin{lemma}\label{Lmm-Q}
	Let $g_i (x, v) \,, (i=1,2,3) $ be smooth functions, and $(i,j)=(1,2)$ or $(i,j)=(2,1)$, then we have
	\begin{equation}
	\big| \langle \partial^\beta_v Q (g_1, g_2), g_3 \rangle_{L^2_{x,v}} \big| \lesssim \sum_{\substack{\beta_1 + \beta_2 \leq \beta \\ (i,j)}} \int_{\mathbb{R}^3}
	\| \partial_v^{\beta_1} g_i \|_{L^2_v (\nu)} \| \partial^{\beta_2}_v g_j \|_{L^2_v} \| g_3 \|_{L^2_v (\nu)} \d x
	\end{equation}
	for any $\beta \in \mathbb{N}^3$.
\end{lemma}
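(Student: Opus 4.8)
The plan is to reduce the estimate to the classical trilinear bound for the collision operator by carefully tracking velocity derivatives. First I would recall that, by definition \eqref{Colli-Q}, $Q(g_1,g_2)=\frac{1}{2M}\big[B(Mg_1,Mg_2)+B(Mg_2,Mg_1)\big]$, so applying $\partial^\beta_v$ and using the Leibniz rule distributes derivatives onto the three factors present in the loss and gain terms of $B$: the Maxwellian weight $M$, the two arguments $g_1,g_2$ (evaluated at pre- and post-collisional velocities), and the collision kernel $|(v-v_1)\cdot\omega|$. Since $\partial_v M = -vM$ and, more generally, $\partial^\gamma_v M$ is a polynomial times $M$, each derivative hitting $M$ produces at worst a polynomial factor in $v$ which is absorbed by the $\nu$-weight (recall $\nu(v)\sim 1+|v|$ from Lemma \ref{Lmm-CF-nu}). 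Likewise differentiating the kernel $|(v-v_1)\cdot\omega|$ or the velocity substitutions $v',v_1'$ only brings down bounded or linearly-growing factors, again controlled by $\nu$. This is the step that requires the most care but is essentially the standard computation in \cite{Guo-2002-VPB}; I would state it and refer there rather than reproduce it.

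Next I would pair the differentiated operator against $g_3$ in $L^2_{x,v}$. For fixed $x$, the inner product in $v$ is handled by the classical Grad-type estimate for the nonlinear Boltzmann term: for any split $\beta_1+\beta_2\le\beta$,
\begin{equation*}
  \big| \langle \partial^{\beta_1}_v Q(g_i,g_j)\,[\text{with }\partial^{\beta_2}_v\text{ on the other argument}] , g_3 \rangle_{L^2_v} \big| \lesssim \| \partial^{\beta_1}_v g_i \|_{L^2_v(\nu)} \| \partial^{\beta_2}_v g_j \|_{L^2_v} \| g_3 \|_{L^2_v(\nu)} \,,
\end{equation*}
which follows from the change of variables $(v,v_1)\mapsto(v',v_1')$, the symmetry of the kernel, Cauchy--Schwarz in the $(v,v_1,\omega)$ integration, and the collision-invariance bound \eqref{nu-Propty-3} that lets one trade $\nu(v')+\nu(v_1')$ for $\nu(v)+\nu(v_1)$ (so that exactly one copy of $\nu$ lands on each of the first and third factors, none forced on the middle one). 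Here the symmetry $(i,j)=(1,2)$ or $(2,1)$ simply records which of the two arguments of the symmetric operator $Q$ carries the $\nu$-weight; both assignments are available because $B(f,g)$ appears with both orderings in \eqref{Colli-Q}.

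Finally I would integrate the pointwise-in-$x$ bound over $x\in\R^3$, summing over the finitely many admissible pairs $(\beta_1,\beta_2)$ with $\beta_1+\beta_2\le\beta$ and over $(i,j)$, to arrive at
\begin{equation*}
  \big| \langle \partial^\beta_v Q(g_1,g_2), g_3 \rangle_{L^2_{x,v}} \big| \lesssim \sum_{\substack{\beta_1+\beta_2\le\beta\\ (i,j)}} \int_{\R^3} \| \partial^{\beta_1}_v g_i \|_{L^2_v(\nu)} \| \partial^{\beta_2}_v g_j \|_{L^2_v} \| g_3 \|_{L^2_v(\nu)} \,\d x \,,
\end{equation*}
which is exactly the claimed inequality. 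I expect the main obstacle to be the bookkeeping in the first step: verifying that every term produced by differentiating the Maxwellian weights and the kernel inside $B$ is genuinely dominated by the $\nu$-weighted norms, with no derivative forced onto the unweighted middle factor. This is where property \eqref{nu-Propty-2} (boundedness of $\partial^\beta_v\nu$) and the polynomial structure of $\partial^\gamma_v M$ are used decisively; once that is granted, the remainder is the well-documented Grad estimate. No smallness or $\eps$-dependence enters, so the bound holds as stated for all $\beta\in\mathbb{N}^3$.
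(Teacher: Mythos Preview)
The paper does not prove this lemma at all: it simply states the result and refers the reader to \cite{Guo-2002-VPB} and \cite{GY-06} for the argument. Your proposal is a correct sketch of the standard proof found in those references---Leibniz on $\partial^\beta_v$, absorption of Maxwellian derivatives into the $\nu$-weight via Lemma~\ref{Lmm-CF-nu}, the Grad trilinear estimate with Cauchy--Schwarz in $(v,v_1,\omega)$ and the collisional-invariance bound \eqref{nu-Propty-3}, then integration in $x$---so there is nothing to compare beyond noting that you have written out what the paper leaves to citation.
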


\subsection{Local Existence}

In this subsection, we will construct a unique local-in-time solution to the perturbed VPB system \eqref{VPB-g} for all $0 < \eps \leq 1$ under small size of the initial data. Fixed $\eps \in (0,1)$, the construction is based on a uniform energy estimate for a sequence of iterating approximate solutions. Then we articulate the following lemma.

\begin{lemma}\label{Lmm-Local}
	There exist $ 0 < \delta \leq 1 $ and $ 0 < T \leq 1 $ such that for any $ 0 < \epsilon \leq 1 $, $ g_{\eps, 0} \in H^N_{x,v}$ $( N \geq 3)$ with $ \mathcal{E}_N ( g_{\eps, 0} , \phi_{\eps, 0} ) \leq \delta$, the system \eqref{VPB-g}-\eqref{IC-g} admits a unique solution $ g_\eps \in L^\infty (0,T; H^s_{x,v}) \cap L^2(0,T; H^s_{x,v} (\nu))$ and $ \phi_\eps \in L^\infty (0, T; H^{N+1}_x)$ with uniform energy bound
	\begin{equation}\label{2.3}
	  \sup_{t\in [0,T]} \mathcal{E}_N (g_\eps (t) , \phi_\eps (t) ) + \tfrac{1}{\epsilon^2} \int_0^T \| ( I - P ) g_\eps (t) \|^2_{H^s_{x,v}(\nu)} \d t \leq C
	\end{equation}
	for some constant $C > 0$ independent of $\eps$.
\end{lemma}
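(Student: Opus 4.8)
The plan is to prove Lemma \ref{Lmm-Local} by a standard iteration argument, the one non-routine point being that every estimate must be made uniformly in $\eps$ so that $\delta$, $T$, $C$ come out independent of the Knudsen number. \emph{The iteration scheme.} Set $(g^0_\eps,\phi^0_\eps):=(g_{\eps,0},\phi_{\eps,0})$, constant in time, and, given $(g^n_\eps,\phi^n_\eps)$, define $(g^{n+1}_\eps,\phi^{n+1}_\eps)$ as the solution of the \emph{linear} Vlasov--Poisson--Boltzmann system
\begin{equation*}
  \begin{cases}
    \eps\partial_t g^{n+1}_\eps + v\cdot\nabla_x g^{n+1}_\eps + \tfrac1\eps L g^{n+1}_\eps - \gamma v\cdot\nabla_x\phi^{n+1}_\eps = \mathcal S\big[g^n_\eps,\phi^n_\eps\big] \,, \\
    \Delta_x\phi^{n+1}_\eps = \gamma\langle g^{n+1}_\eps,1\rangle_{L^2_v} \,, \qquad g^{n+1}_\eps\big|_{t=0} = g_{\eps,0} \,,
  \end{cases}
\end{equation*}
where $\mathcal S[g,\phi]:=\gamma\eps(v\cdot\nabla_x\phi)\,g-\gamma\eps\,\nabla_x\phi\cdot\nabla_v g+Q(g,g)$ collects the genuinely quadratic and the $O(\eps)$ force terms. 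Keeping the Poisson coupling $-\gamma v\cdot\nabla_x\phi^{n+1}_\eps$ tied to the \emph{current} iterate is essential: taking the $\langle\,\cdot\,,1\rangle_{L^2_v}$-moment of the first equation (using $\langle L\,\cdot\,,1\rangle_{L^2_v}=0$, the oddness of $\langle v,1\rangle_{L^2_v}$, and the fact that the two $O(\eps)$ terms of $\mathcal S$ have opposite $\langle\,\cdot\,,1\rangle_{L^2_v}$-moments while $Q(g,g)\in\mathcal N^\perp$) shows that $g^{n+1}_\eps$ itself obeys the local mass law $\partial_t\langle g^{n+1}_\eps,1\rangle_{L^2_v}+\tfrac1\eps\nabla_x\cdot\langle g^{n+1}_\eps,v\rangle_{L^2_v}=0$. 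For fixed $\eps\in(0,1]$ this linear problem is solvable with $g^{n+1}_\eps\in L^\infty(0,T;H^N_{x,v})\cap L^2(0,T;H^N_{x,v}(\nu))$ and, by elliptic regularity for the Poisson equation, $\phi^{n+1}_\eps\in L^\infty(0,T;H^{N+1}_x)$ --- e.g.\ write $L=\nu+K$, solve the streaming part with the damping $\tfrac\nu\eps$ along characteristics, and treat $\tfrac1\eps K$, the self-consistent Poisson term and the datum $\mathcal S[g^n_\eps,\phi^n_\eps]$ as lower-order perturbations via a contraction (no $\eps$-uniformity needed at this stage).

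\emph{Uniform bound on the iterates.} The core is the $\eps$- and $n$-independent a priori estimate: there are $\delta,T,C\in(0,1]$ such that $\mathcal E_N(g_{\eps,0},\phi_{\eps,0})\leq\delta$ forces $\sup_{t\in[0,T]}\mathcal E_N(g^n_\eps(t),\phi^n_\eps(t))+\tfrac1{\eps^2}\int_0^T\|(I-P)g^n_\eps(t)\|^2_{H^N_{x,v}(\nu)}\d t\leq C$ for all $n$, proved by induction on $n$ (trivial for $n=0$). For the inductive step one applies $\partial^\alpha_x\partial^\beta_v$ ($|\alpha|+|\beta|\leq N$) to the $g^{n+1}_\eps$-equation, pairs with $\partial^\alpha_x\partial^\beta_v g^{n+1}_\eps$ in $L^2_{x,v}$, sums, and obtains a differential inequality of the form
\begin{equation*}
  \tfrac{\d}{\d t}\mathcal E_N(g^{n+1}_\eps,\phi^{n+1}_\eps)+\tfrac{c}{\eps^2}\|(I-P)g^{n+1}_\eps\|^2_{H^N_{x,v}(\nu)}\lesssim \mathcal E_N(g^{n+1}_\eps,\phi^{n+1}_\eps)+\mathcal E_N(g^n_\eps,\phi^n_\eps)^2 .
\end{equation*}
The mechanisms forbidding negative powers of $\eps$ on the right are: (i) skew-symmetry of streaming, $\langle v\cdot\nabla_x\partial^\alpha_x\partial^\beta_v g^{n+1}_\eps,\partial^\alpha_x\partial^\beta_v g^{n+1}_\eps\rangle_{L^2_{x,v}}=0$; (ii) the coercivity of $L$ (Lemma \ref{Lmm-Coercivity-L}), which yields the dissipation $\tfrac{c}{\eps^2}\|(I-P)g^{n+1}_\eps\|^2_{H^N_{x,v}(\nu)}$; for $\beta\neq0$ it also generates the lower-order loss $\tfrac{\delta_2}{\eps^2}\sum_{\beta'<\beta}\|\partial^\alpha_x\partial^{\beta'}_v(I-P)g^{n+1}_\eps\|^2_{L^2_{x,v}}$ and a commutator $[\partial^\beta_v,L]$ (controlled by Lemmas \ref{Lmm-CF-nu}--\ref{Lmm-Q}), both absorbed by running the mixed-derivative estimate inductively in $|\beta|$, the $|\beta'|<|\beta|$ dissipation dominating them; (iii) since $Q(g^n_\eps,g^n_\eps)$, $\tfrac1\eps Lg^{n+1}_\eps$ and $(I-P)(v\cdot\nabla_x g^{n+1}_\eps)$ all lie in $\mathcal N^\perp$, each surviving $\tfrac1\eps$-factor pairs against $(I-P)g^{n+1}_\eps$ and is absorbed into the $\tfrac1{\eps^2}$-dissipation by Young's inequality, at the cost of $\eps$-free quantities such as $\|\nabla_x Pg^{n+1}_\eps\|^2$ and $\|Q(g^n_\eps,g^n_\eps)\|^2_{H^N_xL^2_v}\lesssim\mathcal E_N(g^n_\eps,\phi^n_\eps)^2$ (Lemma \ref{Lmm-Q} with the Sobolev embeddings of Lemma \ref{Lm-Sobolev-Inq}); and (iv) the one remaining singular \emph{linear} term, which carries no compensating power of $\eps$, is recast through the mass law of $g^{n+1}_\eps$ and the Poisson equation as $\langle-\tfrac\gamma\eps v\cdot\nabla_x\partial^\alpha_x\phi^{n+1}_\eps,\partial^\alpha_x g^{n+1}_\eps\rangle_{L^2_{x,v}}=\tfrac12\tfrac{\d}{\d t}\|\nabla_x\partial^\alpha_x\phi^{n+1}_\eps\|^2_{L^2_x}$, i.e.\ it builds exactly the $\|\nabla_x\phi^{n+1}_\eps\|^2_{H^N_x}$-part of $\mathcal E_N$. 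Since the forcing $\mathcal E_N(g^n_\eps,\phi^n_\eps)^2$ is quadratically small in the data, a Gronwall/continuity argument on $[0,T]$ closes the induction with $\delta,T,C$ chosen independent of $\eps$.

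\emph{Convergence, uniqueness, passage to the limit.} Running the same computation for the differences $h^{n+1}_\eps:=g^{n+1}_\eps-g^n_\eps$, $\psi^{n+1}_\eps:=\phi^{n+1}_\eps-\phi^n_\eps$ (vanishing at $t=0$), now in the weaker norm $\mathcal E_{N-1}$ and exploiting the uniform bound $C$, gives $\sup_{[0,T']}\mathcal E_{N-1}(h^{n+1}_\eps,\psi^{n+1}_\eps)\leq\Lambda(C)\,T'\,\sup_{[0,T']}\mathcal E_{N-1}(h^n_\eps,\psi^n_\eps)$ for $T'\leq T$; shrinking $T'$ (still $\eps$-independent) so that $\Lambda(C)T'\leq\tfrac12$ makes $(g^n_\eps)$ Cauchy in $C([0,T'];H^{N-1}_{x,v})$ and $(\nabla_x\phi^n_\eps)$ Cauchy in $C([0,T'];H^{N-1}_x)$. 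The limit $(g_\eps,\phi_\eps)$ inherits \eqref{2.3} by weak-$\star$ lower semicontinuity, belongs to $L^\infty(0,T';H^N_{x,v})\cap L^2(0,T';H^N_{x,v}(\nu))$ with $\phi_\eps\in L^\infty(0,T';H^{N+1}_x)$, and the strong $H^{N-1}_{x,v}$ convergence passes to the limit in the quadratic terms $Q(g_\eps,g_\eps)$, $\gamma\eps(v\cdot\nabla_x\phi_\eps)g_\eps$ and $\gamma\eps\nabla_x\phi_\eps\cdot\nabla_v g_\eps$, so $(g_\eps,\phi_\eps)$ solves \eqref{VPB-g}--\eqref{IC-g}; uniqueness follows from the same difference estimate applied to two solutions with identical data together with Gronwall.

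\emph{Main obstacle.} The only genuinely delicate step is the a priori estimate of the second paragraph: a short-time bound would otherwise be elementary, but here it must be uniform in $\eps$, which forces the joint use of the skew-symmetry of streaming, the $\eps^{-2}$-weighted coercivity of $L$ (with the induction in $|\beta|$ to kill the $\partial^{\beta'}_v$-loss), the microscopic projection to shift every $\eps^{-1}$ onto $(I-P)g_\eps$, and --- for the Poisson-coupling term --- the reformulation through the local mass law and the Poisson equation. Everything else (fixed-$\eps$ linear solvability, the contraction, the limit passage) is routine once this estimate is secured.
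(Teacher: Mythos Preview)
Your overall architecture---linear iteration with the Poisson coupling kept at the $(n{+}1)$-level, an $\eps$-uniform a~priori bound closed by induction on $|\beta|$, then a contraction in a weaker norm---is the same as the paper's, and your treatment of the \emph{pure spatial} derivatives (points (i), (iii), (iv)) is correct.  The genuine gap is in the mixed-derivative step.

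You apply $\partial^\alpha_x\partial^\beta_v$ with $\beta\neq 0$ to the \emph{full} $g^{n+1}_\eps$-equation and test with the full $\partial^\alpha_x\partial^\beta_v g^{n+1}_\eps$.  Point (i) kills only the skew-symmetric main part of streaming; the commutator $[\partial^\beta_v,\,v\!\cdot\!\nabla_x]=\sum_{|\beta_1|=1}C^\beta_{\beta_1}(\partial^{\beta_1}_v v)\!\cdot\!\nabla_x$ survives with the $\tfrac1\eps$ factor and acts on the full $g^{n+1}_\eps$.  When paired with $\partial^\alpha_x\partial^\beta_v Pg^{n+1}_\eps$ (which is nonzero for $|\beta|\le 2$), this produces terms such as $\tfrac1\eps\langle\partial^\alpha_x\nabla_x(a+5c),\partial^\alpha_x b\rangle_{L^2_x}$, i.e.\ $\tfrac1\eps\,\mathcal E_N(g^{n+1}_\eps,\phi^{n+1}_\eps)$, for which there is no compensating dissipation.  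Likewise, for $|\beta|=1$ the Poisson term becomes $-\tfrac\gamma\eps\,\partial^\beta_v(v\!\cdot\!\nabla_x\phi^{n+1}_\eps)=-\tfrac\gamma\eps\,\partial_{x_i}\phi^{n+1}_\eps$, and its pairing with $\partial^\alpha_x\partial_{v_i} g^{n+1}_\eps$ gives $-\tfrac\gamma\eps\langle\nabla_x\partial^\alpha_x\phi^{n+1}_\eps,\partial^\alpha_x b\rangle_{L^2_x}$; your trick (iv) no longer applies once a $v$-derivative has been taken.  Hence the displayed differential inequality cannot hold as written.

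The paper's fix is to take velocity derivatives only on the \emph{microscopic} equation: project \eqref{Iter-Approx-Syst} by $I-P$ first (obtaining \eqref{2.8}), then apply $\partial^\alpha_x\partial^\beta_v$ and test with $\partial^\alpha_x\partial^\beta_v(I-P)g^{n+1}_\eps$.  In that formulation $(I-P)(v\!\cdot\!\nabla_x\phi^{n+1}_\eps)=0$, the streaming commutator acts only on $(I-P)g^{n+1}_\eps$, and the interaction with the fluid part enters solely through $-\tfrac1\eps(I-P)(v\!\cdot\!\nabla_x Pg^{n+1}_\eps)$ and $\tfrac1\eps P(v\!\cdot\!\nabla_x(I-P)g^{n+1}_\eps)$, both of which legitimately pair against $(I-P)g^{n+1}_\eps$ and are absorbed by Young's inequality into the $\tfrac1{\eps^2}$-dissipation at the cost of $\|g^{n+1}_\eps\|_{H^N_xL^2_v}^2$.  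With that modification your outline goes through.
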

\begin{proof}[Proof of Lemma \ref{Lmm-Local}]
	For any fixed $\eps \in (0,1]$, we consider the following linear iterative approximate sequence $(n \geq 0)$ for solving the perturbed VPB system \eqref{VPB-g} with initial data \eqref{IC-g}:
	\begin{equation}\label{Iter-Approx-Syst}
	\left\{
	\begin{array}{l}
	\partial_t g^{n+1}_\eps + \tfrac{1}{\epsilon} v \cdot \nabla_x g^{n+1}_\eps - \tfrac{\gamma}{\epsilon} v \cdot \nabla_x \phi^{n+1}_\eps + \tfrac{1}{\epsilon^2} L g^{n+1}_\eps \\
	\qquad \qquad + \gamma \nabla_x \phi^n_\eps \cdot \nabla_v g^{n+1}_\eps - \gamma v \cdot \nabla_x \phi^n_\eps g^{n+1}_\eps = \tfrac{1}{\epsilon} Q(g^n_\eps, g^n_\eps) \,,\\
	\Delta_x \phi^{n+1}_\eps = \gamma \langle 1, g^{n+1}_\eps \rangle_{L^2_v}
	\end{array}
	\right.
	\end{equation}
	with initial data
	\begin{equation}\label{IC-Iter-Appro}
	\begin{aligned}
	g^{n+1}_\eps |_{t=0} = g_{\eps, 0} (x,v) \,.
	\end{aligned}
	\end{equation}
	We start with $g_\eps^0 (t,x,v) = g_{\eps, 0} (x,v)$ for all $t \geq 0$. For the linear Cauchy problem \eqref{Iter-Approx-Syst}-\eqref{IC-Iter-Appro}, the existence of $g^{n+1}$ is assured above by employing the standard linear theory, once given $g^n$ satisfying \eqref{2.3}.
	
	Now we derive the uniform energy estimates of the iterative approximate system \eqref{Iter-Approx-Syst}. For notational simplicity, we drop the lower index $\eps$ of $g^{n+1}_\eps$ in what follows. For $ \alpha \in \mathbb{N}^3 $ with $|\alpha| \leq N$, we act the derivative operator $\partial_x^\alpha$ on the first $g^{n+1}$-equation of \eqref{Iter-Approx-Syst} and take $L^2_{x,v}$-inner product with $\partial_x^\alpha g^{n+1},$ then we gain
	\begin{equation}\label{2.4}
	  \begin{split}
	    & \tfrac{1}{2} \tfrac{\d}{\d t} \big( \| \partial_x^\alpha g^{n+1} \|^2_{L^2_{x,v}} + \| \nabla_x \partial_x^\alpha \phi^{n+1} \|^2_{L^2_x} \big) + \tfrac{\delta}{\epsilon^2} \| (I-P) \partial_x^\alpha g^{n+1} \|_{L^2_{x,v}(\nu)}^2 \\
	    & \leq \underbrace{ \gamma \langle \partial_x^\alpha ( \nabla_x \phi^n \cdot \nabla_v g^{n+1} ) - \partial_x^\alpha ( v \cdot \nabla_x \phi^n g^{n+1} ) , \partial_x^\alpha g^{n+1} \rangle_{L^2_{x,v}} }_{I} \\
	    & + \underbrace{ \tfrac{1}{\epsilon} \langle \partial_x^\alpha Q(g^n, g^n) , \partial_x^\alpha (I-P) g^{n+1} \rangle_{L^2_{x,v}} }_{I\!I}\,,
	  \end{split}
	\end{equation}
	where we make use of Lemma \ref{Lmm-Coercivity-L} and the equality
	$$\tfrac{1}{2} \tfrac{\d}{\d t} \| \partial_x^\alpha \nabla_x \phi^{n+1} \|^2_{L^2_x} = \langle - \tfrac{\gamma}{\epsilon} v \cdot \nabla_x \partial_x^\alpha \phi^{n+1} , \partial_x^\alpha g^{n+1} \rangle_{L^2_{x,v}} \,,$$
	which is implied by the Poisson equation of $\phi^{n+1}$ in \eqref{Iter-Approx-Syst} and the charge conservation law
	\begin{equation*}
	  \partial_t \langle g^{n+1} , 1 \rangle_{L^2_v} + \tfrac{1}{\eps} \nabla_x \cdot \langle g^{n+1} , v \rangle_{L^2_v} = 0 \,.
	\end{equation*}
	
	Now we estimate the first term $I$ on the right-hand side of \eqref{2.4}. It is easy to be derived from the Leibniz formula that
	\begin{equation}
	  \begin{split}
	    I = & \underbrace{ \langle \nabla_x \phi^n \cdot \nabla_v \partial_x^\alpha g^{n+1} - v \cdot \nabla_x \phi^n \partial_x^\alpha g^{n+1} , \partial_x^\alpha g^{n+1} \rangle_{L^2_{x,v}} }_{I_1} \\
	    & + \sum_{ 0 \neq \alpha_1 \leq \alpha } \underbrace{ C_\alpha^{\alpha_1} \langle \nabla_x \partial_x^{\alpha_1} \phi^n \cdot \nabla_v \partial_x^{\alpha-\alpha_1} g^{n+1} , \partial_x^\alpha g^{n+1} \rangle_{L^2_{x,v}} }_{I_2^{\alpha_1}} \\
	    & - \sum_{ 0 \neq \alpha_1 \leq \alpha } \underbrace{ C_\alpha^{\alpha_1} \langle v \cdot \nabla_x \partial_x^{\alpha_1} \phi^n \partial_v^{\alpha-\alpha_1} g^{n+1} , \partial_x^\alpha g^{n+1} \rangle_{L^2_{x,v}} }_{I_3^{\alpha_2}} \,.
	  \end{split}
	\end{equation}
	From Lemma \ref{Lmm-CF-nu}, the H\"older inequality and the Sobolev embedding $H^2_x \hookrightarrow L^\infty_x$, we derive that
	\begin{equation}\label{I1}
	  \begin{split}
	    I_1 = & - \tfrac{1}{2} \langle v \cdot \nabla \phi^n, |\partial^\alpha_x g^{n+1}|^2 \rangle_{L^2_{x,v}} \lesssim \| \nabla_x \phi^n \|_{L^\infty_x} \| \partial^\alpha_x g^{n+1} \|^2_{L^2_{x,v}(\nu)} \\
	    \lesssim & \| \nabla_x \phi^n \|_{H^2_x} \| \partial_x^\alpha g^{n+1} \|_{L^2_{x,v}(\nu)}^2 \,.
	  \end{split}
	\end{equation}
	For the quantity $I_2^{\alpha_1}$ with $2 \leq |\alpha_1| \leq |\alpha| -1$, we deduce from the Sobolev embedding $H^1_x \hookrightarrow L^4_x$ that
	\begin{equation}
	  \begin{split}
	    I_2^{\alpha_1} \lesssim & \| \nabla_x \partial^{\alpha_1}_x \phi^n \|_{L^4_x} \| \nabla_v \partial^{\alpha - \alpha_1}_x g^{n+1} \|_{L^4_x L^2_v} \| \partial^\alpha_x g^{n+1} \|_{L^2_{x,v}} \\
	    \lesssim & \| \nabla_x \phi^n \|_{H^N_x} \| \nabla_v g^{n+1} \|_{H^{N-1}_x L^2_v} \| g^{n+1} \|_{H^N_x L^2_v} \,.
	  \end{split}
	\end{equation}
	If $|\alpha_1| = 1$ or $\alpha_1 = \alpha$, we derive from the Sobolev embedding $H^2_x \hookrightarrow L^\infty_x$ that
	\begin{equation}
	  \begin{aligned}
	    & \sum_{|\alpha_1| = 1 \textrm{ or } \alpha_1 = \alpha} I_2^{\alpha_1} \lesssim \| \nabla_x \partial^\alpha_x \phi^n \|_{L^2_x} \| \nabla_v g^{n+1} \|_{L^\infty_x L^2_v} \| \partial^\alpha_x g^{n+1} \|_{L^2_{x,v}} \\
	    + & \sum_{|\alpha_1| = 1} \| \nabla_x \partial^{\alpha_1}_x \phi^n \|_{L^\infty_x} \| \nabla_v \partial^{\alpha - \alpha_1}_x g^{n+1} \|_{L^2_{x,v}} \| \partial^\alpha_x g^{n+1} \|_{L^2_{x,v}} \\
	    \lesssim & \| \nabla_x \phi^n \|_{H^N_x} \| \nabla_v g^{n+1} \|_{H^{N-1}_x L^2_v} \| g^{n+1} \|_{H^N_x L^2_v} \,.
	  \end{aligned}
	\end{equation}
	We thereby have
	\begin{equation}\label{I2-alph1}
	  \begin{aligned}
	    & \sum_{0 \neq \alpha_1 \leq \alpha } I_2^{\alpha_1} \lesssim \| \nabla_x \phi^n \|_{H^N_x} \| \nabla_v g^{n+1} \|_{H^{N-1}_x L^2_v} \| g^{n+1} \|_{H^N_x L^2_v} \\
	    \lesssim & \| \nabla_x \phi^n \|_{H^N_x} \Big( \| \nabla_v P g^{n+1} \|_{H^{N-1}_x L^2_v} + \| \nabla_v (I-P) g^{n+1} \|_{H^{N-1}_x L^2_v} \big) \| g^{n+1} \|_{H^N_x L^2_v} \\
	    \lesssim &  \| \nabla_x \phi^n \|_{H^N_x} \| g^{n+1} \|_{H^N_x L^2_v}^2 + \| \nabla_x \phi^n \|_{H^N_x} \| \nabla_v (I-P) g^{n+1} \|_{H^{N-1}_x L^2_v} \| g^{n+1} \|_{H^N_x L^2_v} \,,
	   \end{aligned}
	\end{equation}
	where the decomposition $g^{n+1} = P g^{n+1} + (I-P) g^{n+1}$ and the bound $\| \nabla_v P g^{n+1} \|_{H^{N-1}_x L^2_v} \lesssim \| g^{n+1} \|_{H^N_x L^2_v}$ are utilized. Analogously, we can also estimate that
	\begin{equation}\label{I3-alpha1}
	  - \sum_{0 \neq \alpha_1 \leq \alpha } I_3^{\alpha_1} \lesssim \| \nabla_x \phi^n \|_{H^N_x} \big( \| g^{n+1} \|^2_{H^N_x L^2_v} + \| (I-P) g^{n+1} \|^2_{H^N_x L^2_v (\nu)} \big) \,.
	\end{equation}
	In summary, from the inequalities \eqref{I1}, \eqref{I2-alph1} and \eqref{I3-alpha1}, we have
	\begin{equation}\label{I}
	  I \lesssim \| \nabla_x \phi^n \|_{H^N_x} \Big( \| g^{n+1} \|^2_{H^N_x L^2_v } + \| \nabla_v (I-P) g^{n+1} \|^2_{H^{N-1}_x L^2_v} + \| (I-P) g^{n+1} \|^2_{H^N_x L^2_v (\nu)} \Big) \,.
	\end{equation}
	
	Next we estimate the term $I\!I$ in the right-hand side of \eqref{2.4}. In Lemma \ref{Lmm-Q}, let $g_1 = \partial^{\alpha'}_x g^n$, $g_2 = \partial^{\alpha - \alpha'}_x g^n$ and $g_3 = \partial^\alpha_x ( I - P ) g^{n+1}$. Then we have
	\begin{equation}
	  \begin{aligned}
	    I\!I = & \tfrac{1}{\eps} \sum_{\alpha' \leq \alpha} C_\alpha^{\alpha'} \langle Q ( \partial^{\alpha'}_x g^n , \partial^{\alpha - \alpha'}_x g^n ) , \partial^\alpha_x ( I - P ) g^{n+1} \rangle_{L^2_{x,v}} \\
	    \lesssim & \tfrac{1}{\eps} \sum_{\alpha' \leq \alpha} \underbrace{ \int_{\T^3} \| \partial^{\alpha'}_x g^n \|_{L^2_v (\nu)} \| \partial^{\alpha - \alpha'}_x g^n \|_{L^2_v} \| \partial^\alpha_x ( I - P ) g^{n+1} \|_{L^2_v (\nu)} \d x }_{I\!I_{\alpha'}} \,.
	  \end{aligned}
	\end{equation}
	If $\alpha' = 0 $ or $\alpha$, it is derived from the Sobolev embedding $H^2_x \hookrightarrow L^\infty_x$ and the decomposition $g^n = P g^n + (I - P) g^n$ that
	\begin{equation}\label{II-alpha'1}
	  \begin{aligned}
	    I\!I_{\alpha'} \lesssim & \big( \| g^n \|_{L^\infty_x L^2_v (\nu)} \| \partial^\alpha_x g^n \|_{L^2_{x,v}} + \| \partial^\alpha_x g^n \|_{L^2_{x,v} (\nu)} \| g^n \|_{L^\infty_x L^2_v} \big) \| \partial^m_x ( I - P ) g^{n+1} \|_{L^2_{x,v} (\nu)} \\
	    \lesssim & \| g^n \|_{H^N_x L^2_v} \| g^n \|_{H^N_x L^2_v (\nu)} \| ( I - P ) g^{n+1} \|_{H^N_x L^2_v (\nu)} \\
	    \lesssim & \| g^n \|_{H^N_x L^2_v} \big( \| g^n \|_{H^N_x L^2_v} + \| ( I - P ) g^n \|_{H^N_x L^2_v (\nu)} \big) \| ( I - P ) g^{n+1} \|_{H^N_x L^2_v (\nu)} \,,
	  \end{aligned}
	\end{equation}
	where the bound $ \| P g^n \|_{H^N_x L^2_v (\nu)} \lesssim \| g^n \|_{H^N_x L^2_v} $ is also utilized. If $1 \leq |\alpha'| \leq |\alpha| - 1$, we deduce from the Sobolev embedding $H^1_x \hookrightarrow L^4_x$ that
	\begin{equation}\label{II-alpha'2}
	  \begin{aligned}
	    I\!I_{\alpha'} \lesssim & \| \partial^{\alpha'}_x g^n \|_{L^4_x L^2_v (\nu)} \| \partial^{\alpha - \alpha'}_x g^n \|_{L^4_x L^2_v} \| \partial^\alpha_x (I-P) g^{n+1} \|_{L^2_{x,v} (\nu)} \\
	    \lesssim & \| g^n \|_{H^N_x L^2_v} \| g^n \|_{H^N_x L^2_v (\nu)} \| ( I - P ) g^{n+1} \|_{H^N_x L^2_v (\nu)} \\
	    \lesssim & \| g^n \|_{H^N_x L^2_v} \big( \| g^n \|_{H^N_x L^2_v} + \| ( I - P ) g^n \|_{H^N_x L^2_v (\nu)} \big) \| ( I - P ) g^{n+1} \|_{H^N_x L^2_v (\nu)} \,.
	  \end{aligned}
	\end{equation}
	Consequently, from \eqref{II-alpha'1} and \eqref{II-alpha'2}, we deduce that
	\begin{equation}\label{II}
	  I\!I \lesssim \tfrac{1}{\eps} \| g^n \|_{H^N_x L^2_v} \big( \| g^n \|_{H^N_x L^2_v} + \| ( I - P ) g^n \|_{H^N_x L^2_v (\nu)} \big) \| ( I - P ) g^{n+1} \|_{H^N_x L^2_v (\nu)} \,.
	\end{equation}
	From plugging the inequalities \eqref{I} and \eqref{II} into (\ref{2.4}) and summing up over all $|\alpha|\leq N$, we deduce that
	\begin{equation}\label{Iter-Spatial-Bnd}
	  \begin{split}
	    & \tfrac{1}{2} \tfrac{\d}{\d t} \big( \| g^{n+1} \|^2_{H^N_x L^2_v} + \| \nabla_x \phi^{n+1} \|^2_{H^N_x} \big) + \tfrac{\delta}{\eps^2} \| ( I - P ) g^{n+1} \|^2_{H^N_x L^2_v(\nu)} \\
	    & \lesssim \| \nabla_x \phi^n \|_{H^N_x} \Big( \| g^{n+1} \|^2_{H^N_x L^2_v } + \| \nabla_v (I-P) g^{n+1} \|^2_{H^{N-1}_x L^2_v} + \| (I-P) g^{n+1} \|^2_{H^N_x L^2_v (\nu)} \Big) \\
	    & + \tfrac{1}{\eps} \| g^n \|_{H^N_x L^2_v} \big( \| g^n \|_{H^N_x L^2_v} + \| ( I - P ) g^n \|_{H^N_x L^2_v (\nu)} \big) \| ( I - P ) g^{n+1} \|_{H^N_x L^2_v (\nu)} \,.
	  \end{split}
	\end{equation}
	
	It remains to control the norm $\| \nabla_v (I-P) g^{n+1} \|_{H^{N-1}_x L^2_v}$ in \eqref{Iter-Spatial-Bnd} isolated with the mixed derivative estimates. By applying the microscopic projection $(I-P)$ to the first $g^{n+1}$-equation of \eqref{Iter-Approx-Syst}, one has the microscopic evolution equation
	\begin{equation}\label{2.8}
	  \begin{split}
	    & \partial_t (I-P) g^{n+1} + \tfrac{1}{\epsilon} ( I - P) \big( v \cdot \nabla_x (I-P) g^{n+1} \big) + \tfrac{1}{\epsilon^2} L (I-P) g^{n+1} \\
	    & + \gamma (I-P) \big[ \nabla_x \phi^n \cdot \nabla_v (I-P) g^{n+1} - v \cdot \nabla_x \phi^n (I-P) g^{n+1} \big] \\
	    = & \tfrac{1}{\epsilon} Q ( g^n , g^n ) + \gamma (I-P) \big( v \cdot \nabla_x \phi^n P g^{n+1} \big) - \tfrac{1}{\eps} (I-P) ( v \cdot \nabla_x P g^{n+1} ) \,,
	  \end{split}
	\end{equation}
	where we also use the relations
	$$ ( I - P ) ( v \cdot \nabla_x \phi^{n+1} ) = (I-P) ( \nabla_x \phi^n \cdot \nabla_v P g^{n+1} ) = 0 \,. $$

	For any fixed $\alpha \,, \beta \in  \mathbb{N}^3$ with $ \beta \neq 0$ and $|\alpha|+|\beta|\leq N (N\geq 4)$, one can deduce from taking mixed derivative operator $ \partial_x^\alpha \partial_v^\beta$ on \eqref{2.8}, taking $L^2_{x,v}$-inner product by dot with $\partial_x^\alpha \partial_v^\beta (I-P) g^{n+1}$ and integrating by parts over $\mathbb{R}^3 \times \mathbb{R}^3 $ that for some constants $\delta_1, \delta_2 > 0$
	\begin{equation}\label{2.9}
	  \begin{split}
	    & \tfrac{1}{2} \tfrac{\d}{\d t} \| \partial_x^\alpha \partial_v^\beta (I-P) g^{n+1} \|^2_{L^2_{x,v}} + \tfrac{\delta_1}{\epsilon^2} \| \partial_x^\alpha \partial_v^\beta (I-P) g^{n+1} \|_{L^2_{x,v}(\nu)}^2 \\
	    & \leq \tfrac{\delta_2}{\eps^2} \sum_{\beta' < \beta} \| \partial_x^\alpha \partial_v^{\beta'} (I-P) g^{n+1} \|_{L^2_{x,v}}^2 \\
	    & + \underbrace{ \tfrac{1}{\epsilon} \sum_{ | \beta_1 | = 1} C_\beta^{\beta_1} \langle \partial_v^{\beta_1} v \cdot \nabla_x \partial_x^\alpha \partial_v^{\beta-\beta_1} (I-P) g^{n+1} , \partial_x^\alpha \partial_v^\beta (I-P) g^{n+1} \rangle_{L^2_{x,v}} }_{S_1} \\
	    & \underbrace{ - \gamma \langle \partial_x^\alpha \partial_v^\beta (I-P) ( \nabla_x \phi^n \cdot \nabla_v (I-P) g^{n+1} ) , \partial_x^\alpha \partial_v^\beta (I-P) g^{n+1} \rangle_{L^2_{x,v}} }_{S_2} \\
	    & + \underbrace{ \gamma \langle \partial_x^\alpha \partial_v^\beta (I-P) ( v \cdot \nabla_x \phi^n (I-P) g^{n+1} ) , \partial_x^\alpha \partial_v^\beta (I-P) g^{n+1} \rangle_{L^2_{x,v}} }_{S_3} \\
	    & + \underbrace{  \tfrac{1}{\epsilon} \langle \partial_x^\alpha \partial_v^\beta Q ( g^n , g^n ) , \partial_x^\alpha \partial_v^\beta (I-P) g^{n+1} \rangle_{L^2_{x,v}} }_{S_4} \\
	    & + \underbrace{ \gamma \langle \partial_x^\alpha \partial_v^\beta (I-P) ( v \cdot \nabla_x \phi^n P g^{n+1} ) , \partial_x^\alpha \partial_v^\beta (I-P) g^{n+1} \rangle_{L^2_{x,v}} }_{S_5} \\
	    & \underbrace{ - \tfrac{1}{\eps} \langle \partial_x^\alpha \partial_v^\beta (I-P) ( v \cdot \nabla_x P g^{n+1} ), \partial_x^\alpha \partial_v^\beta (I-P) g^{n+1} \rangle_{L^2_{x,v}} }_{S_6} \\
	    & + \underbrace{ \tfrac{1}{\eps} \langle \partial_x^\alpha \partial_v^\beta P ( v \cdot \nabla_x (I-P) g^{n+1} ), \partial_x^\alpha \partial_v^\beta (I-P) g^{n+1} \rangle_{L^2_{x,v}} }_{S_7} \,,
	  \end{split}
	\end{equation}
	where Lemma \ref{Lmm-Coercivity-L} and $\partial^{\beta_1}_v v = 0$ for $|\beta_2| \geq 2$ are also used.
	
	Now we estimate terms $S_i (1 \leq i \leq 7)$ in \eqref{2.9}. The H\"older inequality and Lemma \ref{Lmm-CF-nu} reduce to
	\begin{equation}\label{S1}
	  \begin{split}
	    S_1 \lesssim & \tfrac{1}{\eps} \sum_{|\beta_1| = 1} \| \nabla_x \partial^\alpha_x \partial^{\beta - \beta_1}_v ( I - P ) g^{n+1} \|_{L^2_{x,v}} \| \partial^\alpha_x \partial^\beta_v ( I - P ) g^{n+1} \|_{L^2_{x,v}}  \\
	    \lesssim & \tfrac{1}{\eps} \| (I-P) g^{n+1} \|_{H^N_{x,v}} \| ( I-P ) g^{n+1} \|_{\widetilde{H}^N_{x,v}(\nu)} \,.
	  \end{split}
	\end{equation}
	For $S_2$, we divide it into four parts as follows:
	\begin{equation}\no
	  \begin{split}
	    S_2 = & \underbrace{ - \gamma \langle \nabla_x \phi^n \cdot \nabla_v \partial^\alpha_x \partial^\beta_v (I-P) g^{n+1}, \partial^\alpha_x \partial^\beta_v (I-P) g^{n+1} \rangle_{L^2_{x,v}} }_{ S_2^{(1)} } \\
	    & \underbrace{ - \gamma \sum_{\substack{ \alpha_1 \leq \alpha \\ 0 < |\alpha_1| \leq N-2 }} C_\alpha^{\alpha_1} \langle \nabla_x \partial^{\alpha_1}_x \phi^n \cdot \nabla_v \partial^{\alpha - \alpha_1}_x \partial^\beta_v (I-P) g^{n+1}, \partial^\alpha_x \partial^\beta_v (I-P) g^{n+1} \rangle_{L^2_{x,v}} }_{ S_2^{(2)} } \\
	    & \underbrace{ - \gamma \sum_{\substack{ \alpha_1 \leq \alpha \\ N-1 \leq |\alpha_1| \leq N }} C_\alpha^{\alpha_1} \langle \nabla_x \partial^{\alpha_1}_x \phi^n \cdot \nabla_v \partial^{\alpha - \alpha_1}_x \partial^\beta_v (I-P) g^{n+1}, \partial^\alpha_x \partial^\beta_v (I-P) g^{n+1} \rangle_{L^2_{x,v}} }_{ S_2^{(3)} } \\
	    & + \underbrace{ \gamma \langle \partial^\alpha_x \partial^\beta_v P ( \nabla_x \phi^n \cdot \nabla_v ( I-P ) g^{n+1} ) , \partial^\alpha_x \partial^\beta_v (I-P) g^{n+1} \rangle_{L^2_{x,v}} }_{ S_2^{(4)} }  \,.
	  \end{split}
	\end{equation}
	Using Lemma \ref{Lmm-CF-nu} and the Sobolev embedding $H^2_x \hookrightarrow L^\infty_x$, we can estimate that
	\begin{equation}\no
	  \begin{aligned}
	    S_2^{(1)} & = - \tfrac{\gamma}{2} \langle v \cdot \nabla_x \phi^n, | \partial_x^\alpha \partial_v^\beta (I-P) g^{n+1} |^2 \rangle_{L^2_{x,v}} \lesssim \| \nabla_x \phi^n \|_{L_x^\infty} \| \partial_x^\alpha \partial_v^\beta (I-P) g^{n+1} \|_{L^2_{x,v} (\nu)}^2 \\
	    & \lesssim \| \nabla_x \phi^n \|_{H^N_x} \| (I-P) g^{n+1} \|_{\widetilde{H}^N_{x,v}(\nu)}^2 \,,
	  \end{aligned}
	\end{equation}
	and
	\begin{equation}\no
	  \begin{split}
	    S_2^{(2)} & \lesssim \sum_{\substack{ \alpha_1 \leq \alpha \\ 0 < |\alpha_1| \leq N-2 }} \| \nabla_x \partial^{\alpha_1}_x \phi^n \|_{L^\infty_x} \| \nabla_v \partial^{\alpha - \alpha_1}_x \partial^\beta_v (I-P) g^{n+1} \|_{L^2_{x,v}} \| \partial^\alpha_x \partial^\beta_v (I-P) g^{n+1} \|_{L^2_{x,v}} \\
	    & \lesssim \| \nabla_x \phi^n \|_{H^N_x} \| ( I -P ) g^{n+1} \|^2_{\widetilde{H}^N_{x,v}} \,,
	  \end{split}
	\end{equation}
	and
	\begin{equation}\no
	  \begin{split}
	    S_2^{(3)} & \lesssim \sum_{\substack{ \alpha_1 \leq \alpha \\ N - 1 \leq |\alpha_1| \leq N }} \| \nabla_x \partial^{\alpha_1}_x \phi^n \|_{L^2_x} \| \nabla_v \partial^{\alpha - \alpha_1}_x \partial^\beta_v (I-P) g^{n+1} \|_{L^\infty_x L^2_v} \| \partial^\alpha_x \partial^\beta_v (I-P) g^{n+1} \|_{L^2_{x,v}} \\
	    & \lesssim \| \nabla_x \phi^n \|_{H^N_x} \| ( I -P ) g^{n+1} \|^2_{\widetilde{H}^N_{x,v}} \,,
	  \end{split}
	\end{equation}
	For the quantity $ S_2^{(4)} $, by using the fact $\| \partial^\beta_v P g \|_{L^2_{x,v}} \lesssim \| g \|_{L^2_{x,v}}$, we imply that
	\begin{equation}\no
	  \begin{aligned}
	    S_2^{(4)} \lesssim & \| \partial^\alpha_x ( \nabla_x \phi^n \cdot \nabla_v ( I-P ) g^{n+1} ) \|_{L^2_{x,v}} \| \partial^\alpha_x \partial^\beta_v g^{n+1} \|_{L^2_{x,v}} \\
	    \lesssim & \| \nabla_x \phi^n \|_{H^N_x} \| (I-P) g^{n+1} \|^2_{\widetilde{H}^N_{x,v}} \,.
	  \end{aligned}
	\end{equation}
	In summary, we have
	\begin{equation}\label{S2}
	  \begin{aligned}
	    S_2 = S_2^{(1)} + S_2^{(2)} + S_2^{(3)} + S_2^{(4)} \lesssim \| \nabla_x \phi^n \|_{H^N_x} \| (I-P) g^{n+1} \|^2_{\widetilde{H}^N_{x,v}} \,.
	  \end{aligned}
	\end{equation}
	Similarly in estimates of $S_2$, $S_3$ is bounded by
	\begin{equation}\label{S3}
	    S_3 \lesssim \| \nabla_x \phi^n \|_{H^N_x} \big( \| (I-P) g^{n+1} \|^2_{\widetilde{H}^N_{x,v} (\nu)} + \| (I-P) g^{n+1} \|^2_{H^N_x L^2_v (\nu)} \big) \,.
	\end{equation}
	
	Next, we estimate the nonlinear collision term $S_4$. Applying Lemma \ref{Lmm-Q} with $ g_1 = \partial_x^{\alpha_1} g^n $, $ g_2 = \partial_x^{\alpha-\alpha_1} g^n$ and $ g_3 =  \partial_x^\alpha \partial_v^\beta (I-P) g^{n+1}$ implies that
	\begin{equation}\label{S4}
	  \begin{aligned}
	    S_4 = & \tfrac{1}{\eps} \sum_{\alpha_1 \leq \alpha} C_\alpha^{\alpha_1} \langle \partial^\beta_v Q ( \partial^{\alpha_1}_x g^n , \partial^{\alpha - \alpha_1}_x g^n ) , \partial^\alpha_x \partial^\beta_v (I-P) g^{n+1} \rangle_{L^2_{x,v}} \\
	    \lesssim & \tfrac{1}{\eps} \sum_{\substack{ \alpha_1 \leq \alpha \\ \beta_1 + \beta_2 \leq \beta }} \int_{\T^3} \| \partial^{\alpha_1}_x \partial^{\beta_1}_v g^n \|_{L^2_v (\nu)} \| \partial^{\alpha - \alpha_1}_x \partial^{\beta_2}_v g^n \|_{L^2_v} \| \partial_x^\alpha \partial_v^\beta (I-P) g^{n+1} \|_{L^2_v (\nu)} \d x \\
	    \lesssim & \tfrac{1}{\eps} \| g^n \|_{H^N_{x,v}} \| g^n \|_{H^N_{x,v} (\nu)} \| (I-P) g^{n+1} \|_{\widetilde{H}^N_{x,v} (\nu)} \\
	    \lesssim & \tfrac{1}{\eps} \| g^n \|_{H^N_{x,v}} \big( \| g^n \|_{H^N_x L^2_v} + \| (I-P) g^n \|_{H^N_{x,v} (\nu)} \big) \| (I-P) g^{n+1} \|_{\widetilde{H}^N_{x,v} (\nu)} \,.
	  \end{aligned}
	\end{equation}
	
	Direct calculation yields that
	\begin{equation}\no
	  \begin{aligned}
	     & ( I - P ) ( v \cdot \nabla_x \phi^n P g^{n+1} ) \\
	     & = A(v) : \nabla_x \phi^n \otimes \langle g^{n+1} , v \rangle_{L^2_v} + B(v) \cdot \nabla_x \phi^n \langle g^{n+1} , \tfrac{|v|^2}{3} - 1 \rangle_{L^2_v} \,.
	  \end{aligned}
	\end{equation}
	Then the quantity $S_5$ can be estimated that
	\begin{equation}\label{S5}
	  \begin{aligned}
	    S_5 = & \gamma \l \partial^\alpha_x [ \partial^\beta_v A(v) : \nabla_x \phi^n \otimes \l g^{n+1} , v \r_{L^2_v} \\
	    & \qquad \qquad + \partial^\beta_v B(v) \cdot \nabla_x \phi^n \l g^{n+1}, \tfrac{|v|^2}{3} - 1 \r_{L^2_v} ] , \partial^\alpha_x \partial^\beta_v (I-P) g^{n+1} \r_{x,v} \\
	    \lesssim & \| \nabla_x \phi^n \|_{H^N_x} \| g^{n+1} \|_{H^N_x L^2_v} \| \partial^\alpha_x \partial^\beta_v (I-P) g^{n+1} \|_{L^2_{x,v}} \\
	    \lesssim & \| \nabla_x \phi^n \|_{H^N_x} \| g^{n+1} \|_{H^N_x L^2_v} \| (I-P) g^{n+1} \|_{\widetilde{H}^N_{x,v}} \,.
	  \end{aligned}
	\end{equation}
	Notice that \cite{BGL1}
	\begin{equation}\no
	  \begin{aligned}
	    ( I - P ) ( v \cdot \nabla_x P g^{n+1} ) = A(v) : \nabla_x \langle g^{n+1} , v \rangle_{L^2_v} + B (v) \cdot \nabla_x \langle g^{n+1} , \tfrac{|v|^2}{3} - 1 \rangle_{L^2_v} \,.
	  \end{aligned}
	\end{equation}
	Then the term $S_6$ can be estimated by
	\begin{equation}\label{S6}
	  \begin{aligned}
	    S_6 = & \tfrac{1}{\eps} \big\langle \partial^\beta_v A(v) : \nabla_x \partial^\alpha_x \langle g^{n+1} , v \rangle_{L^2_v} + \partial^\beta_v B(v) \cdot \nabla_x \partial^\alpha_x \langle g^{n+1} , \tfrac{|v|^2}{3} - 1 \rangle_{L^2_v} , \partial^\alpha_x \partial^\beta_v (I-P) g^{n+1} \big\rangle_{L^2_{x,v}} \\
	    \lesssim & \tfrac{1}{\eps} \big( \| \partial^\beta_v A(v) \|_{L^2_v} + \| \partial^\beta_v B(v) \|_{L^2_v} \big) \| \nabla_x \partial^\alpha_x g^{n+1} \|_{L^2_{x,v}} \| \partial^\alpha_x \partial^\beta_v (I-P) g^{n+1} \|_{L^2_{x,v}} \\
	    \lesssim & \tfrac{1}{\eps} \| g^{n+1} \|_{H^N_x L^2_v} \| (I-P) g^{n+1} \|_{\widetilde{H}^N_{x,v} (\nu)} \,.
	  \end{aligned}
	\end{equation}
	
	One easily knows that $ \| \partial^\beta_v P ( p (v) g  ) \|_{L^2_v} \lesssim \| g \|_{L^2_v} $ holds for all $\beta \in \mathbb{N}^3$ and any polynomial $p(v)$. We thereby deduce from Lemma \ref{Lmm-CF-nu} that
	\begin{equation}\label{S7}
	  \begin{aligned}
	    S_7 \lesssim & \tfrac{1}{\eps} \| \partial^\alpha_x \partial^\beta_v P ( v \cdot \nabla_x (I-P) g^{n+1} ) \|_{L^2_{x,v}} \| \partial^\alpha_x \partial^\beta_v (I-P) g^{n+1} \|_{L^2_{x,v}} \\
	    \lesssim & \tfrac{1}{\eps} \| \partial^\alpha_x \nabla_x (I-P) g^{n+1} \|_{L^2_{x,v}} \| \partial^\alpha_x \partial^\beta_v (I-P) g^{n+1} \|_{L^2_{x,v} (\nu)} \\
	    \lesssim & \tfrac{1}{\eps} \| (I-P) g^{n+1} \|_{H^N_x L^2_v (\nu)} \| (I-P) g^{n+1} \|_{\widetilde{H}^N_{x,v}(\nu)} \,.
	  \end{aligned}
	\end{equation}
	
	From substituting the estimates \eqref{S1}, \eqref{S2}, \eqref{S3}, \eqref{S4}, \eqref{S5}, \eqref{S6} and \eqref{S7} into \eqref{2.9}, we deduce that
	\begin{equation}\label{2.1.1}
	  \begin{split}
	    & \tfrac{1}{2} \tfrac{\d}{\d t} \| \partial_x^\alpha \partial_v^\beta (I-P) g^{n+1} \|^2_{L^2_{x,v}} + \tfrac{\delta_1}{\epsilon^2} \| \partial_x^\alpha \partial_v^\beta (I-P) g^{n+1} \|_{L^2_{x,v} (\nu)}^2 \\
	    & \lesssim \tfrac{1}{\eps^2} \sum_{\beta' < \beta} \| \partial^\alpha_x \partial^{\beta'}_v (I-P) g^{n+1} \|^2_{L^2_{x,v} (\nu)} + \tfrac{1}{\eps} \| (I-P) g^{n+1} \|_{H^N_x L^2_v (\nu)} \| (I-P) g^{n+1} \|_{\widetilde{H}^N_{x,v}(\nu)} \\
	    & + \| \nabla_x \phi^n \|_{H^N_x} \big( \| (I-P) g^{n+1} \|^2_{H^N_{x,v}} + \| (I-P) g^{n+1} \|^2_{H^N_{x,v} (\nu)} \big) \\
	    & + \tfrac{1}{\eps} \big( \| g^n \|^2_{H^N_{x,v}} + \| g^n \|_{H^N_{x,v}} \| (I-P) g^n \|_{H^N_{x,v}(\nu)} \big) \| (I-P) g^{n+1} \|_{\widetilde{H}^N_{x,v} (\nu)} \,.
	  \end{split}
	\end{equation}
	Combining \eqref{Iter-Spatial-Bnd} and \eqref{2.1.1}, we easily derive from the induction for $0 \leq |\beta| = k \leq N$ that there are positive constants $a_{|\beta|}$, $b_k$ and $c_k$ such that
	\begin{equation}\label{2.20}
	  \begin{split}
	    & \tfrac{1}{2} \tfrac{\d}{\d t} \bigg\{ \| g^{n+1} \|^2_{H^N_x L^2_v} + \| \nabla_x \phi^{n+1} \|^2_{H^N_x} + \sum_{\substack{ |\alpha| + |\beta| \leq N \\ 1 \leq |\beta| \leq k }} c_{|\beta|}  \| \partial_x^\alpha \partial_v^\beta (I-P) g^{n+1} \|^2_{L^2_{x,v}} \bigg\} \\
	    & + \tfrac{b_k}{\epsilon^2} \| (I-P) g^{n+1} \|^2_{H^N_x L^2_v (\nu)} + \tfrac{c_k}{\eps^2} \sum_{\substack{ |\alpha| + |\beta| \leq N \\ 1 \leq |\beta| \leq k }} \| \partial_x^\alpha \partial_v^\beta (I-P) g^{n+1} \|^2_{L^2_{x,v} (\nu)} \\
	    & \lesssim  \| \nabla_x \phi^n \|_{H^N_x} \big( \| g^{n+1} \|^2_{H^N_{x,v}} + \| (I-P) g^{n+1} \|^2_{H^N_{x,v} (\nu)} \big) \\
	    & + \tfrac{1}{\eps} \big( \| g^n \|^2_{H^N_{x,v}} + \| g^n \|_{H^N_{x,v}} \| (I-P) g^n \|_{H^N_{x,v}(\nu)} \big) \| (I-P) g^{n+1} \|_{H^N_{x,v} (\nu)}
	  \end{split}
	\end{equation}
	for all $0 \leq |\beta| = k \leq N$.
	
	We define the so-called instant iterating energy $\mathscr{E}_{N,\eps} ( g, \phi )$
	\begin{equation}
	  \begin{split}
	    \mathscr{E}_{N,\eps} (g, \phi) = & \| g \|^2_{H^N_x L^2_v} + \| \nabla_x \phi \|^2_{H^N_x} + a_N \| (I-P) g \|^2_{\widetilde{H}^N_{x,v}} \\
	    + & \tfrac{2}{\eps^2} \int_0^t \big( b_N \| (I-P) g \|^2_{H^N_x L^2_v (\nu)} + c_N \| (I-P) g \|^2_{\widetilde{H}^N_{x,v} (\nu)} \big) \d \tau \,,
	  \end{split}
	\end{equation}
	and the initial energy $\mathscr{E}_{N} ( g_0, \phi_0 )$ is given as
	\begin{equation}
	  \begin{aligned}
	    \mathscr{E}_{N} ( g_0, \phi_0 ) = \| g_0 \|^2_{H^N_x L^2_v} + \| \nabla_x \phi_0 \|^2_{H^N_x} + a_N \| (I-P) g_0 \|^2_{\widetilde{H}^N_{x,v}} \,.
	  \end{aligned}
	\end{equation}
	It is obvious that
	\begin{equation*}
	  \begin{aligned}
	    \mathscr{E}_{N, \eps} (g, \phi) \thicksim \mathcal{E}_N (g, \phi) + \tfrac{1}{\eps^2} \int_0^t \| (I-P) g \|^2_{H^N_{x,v} (\nu)} \d \tau \,.
	  \end{aligned}
	\end{equation*}
	
	Now we claim that there are small $\ell, T \in (0, 1]$, independent of $\eps$, such that if $0 < T \leq 1 $, $ \mathscr{E}_N (g_{\eps,0}, \phi_{\eps,0}) \leq C_\# \mathcal{E}_N (g_{\eps,0}, \phi_{\eps,0}) \leq C_\# \delta \leq \ell $ for some constant $C_\# > 0$ and
	$$\sup_{0 \leq t \leq T} \mathscr{E}_{N,\eps} (g^n (t), \phi^n (t)) \d t \leq 2 \ell \,, $$
	then
	\begin{equation}\label{Claim-Unif-Bnd}
	  \begin{aligned}
	    \sup_{0 \leq t \leq T} \mathscr{E}_{N,\eps} (g^{n+1} (t), \phi^{n+1} (t)) \leq 2 \ell \,.
	  \end{aligned}
	\end{equation}
	
	Indeed, from taking $k=N$ in the inequality \eqref{2.20} and integrating over $[0,t]$, we derive that for all $t \in [0, T]$
	\begin{equation}\no
	  \begin{aligned}
	    \mathscr{E}_{N, \eps} (g^{n+1} (t), \phi^{n+1} (t) )  \lesssim & \mathscr{E}_N (g_{\eps, 0} , \phi_{\eps, 0}) + ( 1 + t ) \sup_{0 \leq t \leq T} \mathscr{E}_{N,\eps}^\frac{1}{2} ( g^n (t) , \phi^n (t) ) \mathscr{E}_{N,\eps} (g^{n+1} (t) , \phi^{n+1} (t) ) \\
	    + & ( 1 + \sqrt{t} ) \sup_{0 \leq t \leq T} \mathscr{E}_{N,\eps} (g^n (t) , \phi^n (t)) \mathscr{E}_{N,\eps}^\frac{1}{2} (g^{n+1} (t) , \phi^{n+1} (t) ) \,,
	  \end{aligned}
	\end{equation}
	which immediately implies that for some positive constant $C$
	\begin{equation}
	  \begin{aligned}
	    \big[ \tfrac{3}{4} - C ( 1 + T ) \sqrt{2 \ell} \big] \sup_{0 \leq t \leq T} \mathscr{E}_{N, \eps} ( g^{n+1} (t) , \phi^{n+1} (t) ) \leq \ell + 4 C ( 1 + T ) \ell^2 \,.
	  \end{aligned}
	\end{equation}
	We first take $T \in (0, 1)$ and $\ell \in (0,1)$ such that $ \tfrac{3}{4} - 2 C \sqrt{2 \ell} \geq \tfrac{5}{8} $, hence $0 < \ell < \min \{ 1, \tfrac{1}{512 C^2} \}$. Thus, we have
	\begin{equation}
	  \begin{aligned}
	    \sup_{0 \leq t \leq T} \mathscr{E}_{N,\eps} ( g^{n+1} (t) , \phi^{n+1} (t) ) \leq \tfrac{8}{5} \ell + \tfrac{64 C}{5} \ell^2
	  \end{aligned}
	\end{equation}
	We further restrict $0 < \ell < \min \{ 1, \tfrac{1}{512 C^2} , \tfrac{1}{32 C} \}$ such that $ 64 C \ell \leq 2 $. Then we have
	\begin{equation}
	  \begin{aligned}
	    \sup_{0 \leq t \leq T} \mathscr{E}_{N,\eps} ( g^{n+1} (t) , \phi^{n+1} (t) ) \leq \tfrac{8}{5} \ell + \tfrac{2}{5} \ell = 2 \ell \,,
	  \end{aligned}
	\end{equation}
	and the claim \eqref{Claim-Unif-Bnd} holds.
	
	Consequently, by employing the standard compactness arguments and taking $n \rightarrow \infty$, we can obtain a solution $( g_\eps , \phi_\eps )$ to \eqref{VPB-g}-\eqref{IC-g} for any fixed $0 < \eps \leq 1$. Moreover, the uniform bound \eqref{Claim-Unif-Bnd} implies the energy bound \eqref{2.3}. Thus, the proof of Lemma \ref{Lmm-Local} is completed.
\end{proof}

\section{Uniform estimates and global solutions}\label{Sec: Uniform-Bnd}

In this section, our goal is to globally extend the local solutions to \eqref{VPB-g}-\eqref{IC-g} constructed in Lemma \ref{Lmm-Local} by deriving an energy estimates uniformly in $\eps \in (0, 1)$. For notational simplicity, we drop the lower index $\eps$ of $g_\eps $ and $\phi_\eps$ in the perturbed VPB system \eqref{VPB-g}, i.e.,
\begin{equation}\label{VPB-g-drop-eps}
  \left\{
    \begin{array}{l}
      \partial_t g + \tfrac{1}{\epsilon} v \cdot \nabla_x g + \tfrac{1}{\epsilon^2} L g - \tfrac{\gamma}{\epsilon} v \cdot \nabla_x \phi - \gamma v \cdot \nabla_x \phi g + \gamma \nabla_x \phi\cdot \nabla_v g = \tfrac{1}{\epsilon} Q ( g , g ) \,, \\
      \Delta_x \phi = \gamma \langle g , 1 \rangle_{L^2_v} \,.
    \end{array}
  \right.
\end{equation}

\subsection{Pure spatial derivative estimates: kinetic dissipations}

In this subsection, we will consider the energy estimates on the pure spatial derivative of $g$. The following lemma will be established.

\begin{lemma}\label{Lmm-Spatial}
	Assume that $g(t,x,v)$ is the solution to the perturbed VMB system \eqref{VPB-g}-\eqref{IC-g} constructed in Lemma \ref{Lmm-Local}. Then there are constants $\delta$ and $C$ such that
	\begin{equation}\label{Spatial-Bnd}
	  \begin{split}
	    \tfrac{1}{2} \tfrac{\d}{\d t} \Big( \| g \|^2_{H^N_x L^2_v} + \| \nabla_x \phi \|^2_{H^N_x} \Big) + \tfrac{\delta}{\eps^2} \| (I-P) g \|^2_{H^N_x L^2_v (\nu)} \lesssim \mathcal{E}_N^\frac{1}{2} (g, \phi)  \mathcal{D}_{N,\eps} (g)
	  \end{split}
	\end{equation}
	for all $0 < \eps \leq 1$.
\end{lemma}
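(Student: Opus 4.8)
Since the solution is already available from Lemma \ref{Lmm-Local}, \eqref{Spatial-Bnd} is an a priori differential estimate. The plan is the usual top-order spatial energy identity: for each $\alpha\in\mathbb N^3$ with $|\alpha|\le N$, apply $\partial_x^\alpha$ to the first equation of \eqref{VPB-g-drop-eps}, take the $L^2_{x,v}$-inner product with $\partial_x^\alpha g$, and sum over $\alpha$. Four structural facts drive the computation. (i) The streaming term is skew-symmetric, $\tfrac1\eps\langle v\cdot\nabla_x\partial_x^\alpha g,\partial_x^\alpha g\rangle_{L^2_{x,v}}=0$, so the most singular transport contribution drops. (ii) By Lemma \ref{Lmm-Coercivity-L}, $\tfrac1{\eps^2}\langle L\partial_x^\alpha g,\partial_x^\alpha g\rangle_{L^2_{x,v}}\ge\tfrac\delta{\eps^2}\|(I-P)\partial_x^\alpha g\|_{L^2_{x,v}(\nu)}^2$, which after summation furnishes the dissipative term on the left of \eqref{Spatial-Bnd}. (iii) The linear singular forcing becomes an exact time derivative: combining the mass balance law $\partial_t\langle g,1\rangle_{L^2_v}+\tfrac1\eps\nabla_x\!\cdot\langle g,v\rangle_{L^2_v}=0$ with the Poisson equation $\Delta_x\phi=\gamma\langle g,1\rangle_{L^2_v}$ gives $\langle-\tfrac\gamma\eps v\cdot\nabla_x\partial_x^\alpha\phi,\partial_x^\alpha g\rangle_{L^2_{x,v}}=\tfrac12\tfrac{\d}{\d t}\|\nabla_x\partial_x^\alpha\phi\|_{L^2_x}^2$, so $\|\nabla_x\phi\|_{H^N_x}^2$ appears on the left of \eqref{Spatial-Bnd}. (iv) Since $Q(g,g)\in\mathcal N^\perp$, $\tfrac1\eps\langle\partial_x^\alpha Q(g,g),\partial_x^\alpha g\rangle_{L^2_{x,v}}=\langle\partial_x^\alpha Q(g,g),\tfrac1\eps\partial_x^\alpha(I-P)g\rangle_{L^2_{x,v}}$, and Lemma \ref{Lmm-Q} together with the Sobolev inequalities of Lemma \ref{Lm-Sobolev-Inq} bound this by $\|g\|_{H^N_xL^2_v}\big(\|g\|_{H^N_xL^2_v}+\|(I-P)g\|_{H^N_xL^2_v(\nu)}\big)\cdot\tfrac1\eps\|(I-P)g\|_{H^N_xL^2_v(\nu)}\lesssim\mathcal E_N^{1/2}(g,\phi)\,\mathcal D_{N,\eps}(g)$, using $\tfrac1\eps\|(I-P)g\|_{H^N_xL^2_v(\nu)}\le\mathcal D_{N,\eps}^{1/2}(g)$ and $\eps\le1$.

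The remaining terms come from the electric force $-\gamma v\cdot\nabla_x\phi\,g+\gamma\nabla_x\phi\cdot\nabla_v g$. Expanding $\partial_x^\alpha$ by the Leibniz rule and pairing with $\partial_x^\alpha g$, the pieces carrying at least one $x$-derivative on $\phi$ are estimated by the same Sobolev arguments as in the derivation of \eqref{I}, with total $\lesssim\|\nabla_x\phi\|_{H^N_x}\big(\|g\|_{H^N_xL^2_v}^2+\|\nabla_v(I-P)g\|_{H^{N-1}_xL^2_v}^2+\|(I-P)g\|_{H^N_xL^2_v(\nu)}^2\big)$; the key point is that the Poisson equation bounds $\|\nabla_x\phi\|_{H^N_x}$, in positive Sobolev norms, by $\|\langle g,1\rangle_{L^2_v}\|_{H^{N-1}_x}\lesssim\mathcal D_{N,\eps}^{1/2}(g)$ (and always by $\mathcal E_N^{1/2}(g,\phi)$), while $\|\nabla_v(I-P)g\|_{H^{N-1}_xL^2_v}$ and $\|(I-P)g\|_{H^N_xL^2_v(\nu)}$ belong to $\mathcal D_{N,\eps}(g)$ (up to the factor $\eps\le1$). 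The piece where all derivatives fall on $g$ reduces, after integrating the $\nabla_v$-term by parts with $\nabla_v M=-vM$, to $-\tfrac\gamma2\langle v\cdot\nabla_x\phi,|\partial_x^\alpha g|^2\rangle_{L^2_{x,v}}$. Writing $\partial_x^\alpha g=\partial_x^\alpha Pg+\partial_x^\alpha(I-P)g$, the contributions with a factor $(I-P)g$ are controlled by $\|\nabla_x\phi\|_{L^\infty_x}\|(I-P)g\|_{L^2_{x,v}(\nu)}^2$ and by $\|\nabla_x\phi\|_{L^3_x}\|Pg\|_{L^6_xL^2_v}\|(I-P)g\|_{L^2_{x,v}(\nu)}$ (and their higher-$\alpha$ analogues), hence by $\mathcal E_N^{1/2}(g,\phi)\,\mathcal D_{N,\eps}(g)$ after invoking $\|Pg\|_{L^6_xL^2_v}\lesssim\|\nabla_xPg\|_{L^2_{x,v}}$, the embeddings of Lemma \ref{Lm-Sobolev-Inq}, and the $\tfrac1{\eps^2}$ gain in $\mathcal D_{N,\eps}$; and for $|\alpha|\ge1$ the purely macroscopic term $-\tfrac\gamma2\langle v\cdot\nabla_x\phi,|\partial_x^\alpha Pg|^2\rangle$ involves only $\nabla_xPg\in\mathcal D_{N,\eps}(g)$.

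The one genuinely delicate term is the zeroth-order macroscopic self-interaction $-\tfrac\gamma2\langle v\cdot\nabla_x\phi,|Pg|^2\rangle_{L^2_{x,v}}$, which, since $\langle v,|Pg|^2\rangle_{L^2_v}=2b\,(a+5c)$, equals $-\gamma\int_{\R^3}\nabla_x\phi\cdot b\,(a+5c)\,\d x$; here neither $b$ nor $a+5c$ at zeroth spatial order is individually controlled by $\mathcal D_{N,\eps}(g)$. The plan is to integrate by parts in $x$ and feed in the Poisson relation $\gamma(a+3c)=\Delta_x\phi$: splitting $a+5c=(a+3c)+2c$, the identity $\nabla_x\phi\,\Delta_x\phi=\nabla_x\!\cdot(\nabla_x\phi\otimes\nabla_x\phi)-\tfrac12\nabla_x|\nabla_x\phi|^2$ transfers the undissipated factor in the $(a+3c)$-part onto $\nabla_xb$, which lies in $\|\nabla_xPg\|_{H^{N-1}_xL^2_v}$, and the residual $c$-part is handled by a further integration by parts exposing a gradient of $Pg$, invoking the local balance laws \eqref{1.7}, \eqref{Balance-b}, \eqref{Balance-c} so as to keep every resulting quantity dissipative and free of $\eps$-singularities. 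Collecting all contributions and summing over $|\alpha|\le N$, one rearranges to obtain \eqref{Spatial-Bnd}. I expect this last step --- fitting the quadratic macroscopic--field interaction under $\mathcal E_N^{1/2}\mathcal D_{N,\eps}$ without introducing $\eps$-singularities --- to be the main technical obstacle; the rest is the standard Guo-type nonlinear energy bookkeeping, the only genuinely new ingredient being the systematic use of the Poisson equation to place $\nabla_x\phi$ into the dissipation.
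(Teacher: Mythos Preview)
Your structural plan (i)--(iv) matches the paper's proof exactly, and the handling of the linear singular term via mass conservation plus Poisson is correct. However, there is a genuine gap in your bound for the collision term, and an analogous one in the force terms.

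For the collision term you assert
\[
\tfrac{1}{\eps}\langle\partial_x^\alpha Q(g,g),\partial_x^\alpha(I-P)g\rangle
\lesssim \|g\|_{H^N_xL^2_v}\big(\|g\|_{H^N_xL^2_v}+\|(I-P)g\|_{H^N_xL^2_v(\nu)}\big)\cdot\tfrac{1}{\eps}\|(I-P)g\|_{H^N_xL^2_v(\nu)}
\lesssim\mathcal{E}_N^{1/2}\mathcal{D}_{N,\eps}\,.
\]
The piece $\|g\|^2\cdot\tfrac{1}{\eps}\|(I-P)g\|_\nu$ is only $\lesssim\mathcal{E}_N\,\mathcal{D}_{N,\eps}^{1/2}$, which is \emph{not} dominated by $\mathcal{E}_N^{1/2}\mathcal{D}_{N,\eps}$; after Young's inequality it produces an $\mathcal{E}_N^2$ growth term and the global continuation does not close. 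The same defect appears when you invoke the local-existence bound \eqref{I} for the force terms: that bound contains $\|\nabla_x\phi\|_{H^N_x}\|g\|_{H^N_xL^2_v}^2$, and routing $\|\nabla_x\phi\|$ into $\mathcal{D}_{N,\eps}^{1/2}$ via Poisson still leaves $\mathcal{D}_{N,\eps}^{1/2}\mathcal{E}_N$, not $\mathcal{E}_N^{1/2}\mathcal{D}_{N,\eps}$.

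The paper's fix --- the essential refinement over the local-existence estimates --- is to decompose $g=Pg+(I-P)g$ \emph{inside} the quadratic factors. For $Q(Pg,Pg)$ one writes $Pg=a+b\cdot v+c|v|^2$, so $\|\partial_x^\alpha Q(Pg,Pg)\|_{L^2_{x,v}}$ is a sum of products of $(a,b,c)$; for $\alpha\neq0$ at least one factor carries $\nabla_x$, and for $\alpha=0$ the H\"older splitting $L^6\times L^3$ together with $\|\cdot\|_{L^6}\lesssim\|\nabla_x\cdot\|_{L^2}$ manufactures a gradient. Either way one lands on $\|g\|_{H^N_xL^2_v}\|\nabla_xPg\|_{H^{N-1}_xL^2_v}\cdot\tfrac{1}{\eps}\|(I-P)g\|_\nu$, which \emph{is} $\mathcal{E}_N^{1/2}\mathcal{D}_{N,\eps}$. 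The force terms $I_2,I_3$ are treated by the same mechanism: keep $\|\nabla_x\phi\|$ in $\mathcal{E}_N^{1/2}$ and arrange both $g$-factors into the dissipation via the macro--micro split and Sobolev (for $I_3$ one has $\alpha\neq0$, so $\partial_x^\alpha g$ already carries a gradient, and the remaining undifferentiated $Pg$ is placed in $L^\infty_x$ and controlled by $\|\nabla_xPg\|_{H^1_xL^2_v}$). Finally, your plan for the $\alpha=0$ macroscopic self-interaction via the stress-tensor identity and balance laws is heavier than needed: the paper simply splits $a+5c=(a+3c)+2c$ and applies H\"older $L^3\times L^6\times L^2$ (respectively $L^2\times L^6\times L^3$) directly, using $\|b\|_{L^6}\lesssim\|\nabla_xb\|_{L^2}$ and $\|a+3c\|_{L^2}\in\mathcal{D}_{N,\eps}^{1/2}$, with no integration by parts required.
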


\begin{proof}[Proof of Lemma \ref{Lmm-Spatial}]
	For any $\alpha \in \mathbb{N}^3$ with $ | \alpha | \leq N $, applying the derivative operator $ \partial_x^\alpha $ to the first $g$-equation \eqref{VPB-g-drop-eps} yields
	\begin{equation}
	  \begin{split}
	    & \partial_t \partial_x^\alpha g  + \tfrac{1}{\eps} v \cdot \nabla_x ( \partial_x^\alpha g ) + \tfrac{1}{\epsilon^2} L \partial_x^\alpha g - \tfrac{\gamma}{\epsilon} v \cdot \nabla_x \partial_x^\alpha \phi \\
	    & = \tfrac{1}{\epsilon} \partial_x^\alpha Q ( g , g ) + \gamma v \cdot \nabla_x \phi \partial_x^\alpha g - \gamma \nabla_x \phi \cdot \nabla_v \partial_x^\alpha g \\
	    & + \sum_{ 0 \neq \alpha_1 \leq \alpha} C_\alpha^{\alpha_1} ( \gamma v \cdot \nabla_x \partial_x^{\alpha_1} \phi \partial_x^{\alpha - \alpha_1 } g - \gamma \nabla_x \partial_x^{\alpha_1} \phi \cdot \nabla_v \partial_x^{\alpha - \alpha_1} g ) \,.
	  \end{split}
	\end{equation}
	Then, we take $L^2_{x,v}$-inner product in the above equation by dot with $ \partial_x^\alpha g $ and integrate by parts over $\R^3 \times \R^3$. We thereby have
	\begin{equation}\label{3.2.1}
	  \begin{aligned}
	     & \tfrac{1}{2} \tfrac{\d}{\d t} \| \partial_x^\alpha g \| + \tfrac{1}{\epsilon^2} \langle L \partial_x^\alpha g , \partial_x^\alpha g \rangle_{L^2_{x,v}} - \tfrac{\gamma}{\epsilon} \langle v \cdot \nabla_x \partial_x^\alpha \phi , \partial_x^\alpha g \rangle_{L^2_{x,v}} \\
	     & = \underbrace{ \tfrac{1}{\epsilon} \langle \partial_x^\alpha Q ( g , g ) , \partial_x^\alpha g \rangle_{L^2_{x,v}} }_{I_1} + \underbrace{ \gamma \langle v \cdot \nabla_x \phi \partial_x^\alpha g - \nabla_x \phi \nabla_v \partial_x^\alpha g , \partial_x^\alpha g \rangle_{L^2_{x,v}} }_{I_2} \\
	     & + \underbrace{ \gamma \sum_{0 \neq \alpha_1 \leq \alpha} C_{\alpha}^{\alpha_1} \langle  v \cdot \nabla_x \partial_x^{\alpha_1} \phi \partial_x^{\alpha-\alpha_1} g -  \nabla_x \partial_x^{\alpha_1} \phi \cdot \nabla_v \partial_x^{\alpha-\alpha_1} g , \partial_x^\alpha g \rangle_{L^2_{x,v}} }_{I_3} \,.
	  \end{aligned}
	\end{equation}
	First, Lemma \ref{Lmm-Coercivity-L} tells us that there is a constant $\delta > 0$ such that
	\begin{equation}
	  \begin{aligned}
	    \langle L \partial_x^\alpha g , \partial_x^\alpha g \rangle_{L^2_{x,v}} \geq \delta \| (I-P) \partial_x^\alpha g \|_{L^2_{x,v} (\nu)}^2 \,.
	  \end{aligned}
	\end{equation}
	Noticing that the local conservation law of mass is
	\begin{equation}
	  \begin{aligned}
	    \partial_t \l g, 1 \r_{L^2_v} + \tfrac{1}{\eps} \nabla_x \cdot \l g , v \r_{L^2_v} = 0 \,,
	  \end{aligned}
	\end{equation}
	we derive from the Poisson equation of $\phi$ in \eqref{VPB-g-drop-eps}, i.e., $ \Delta_x \phi = \gamma \l g , 1 \r_{L^2_v} $, that
	\begin{equation}
	  \begin{aligned}
	    & - \tfrac{\gamma}{\epsilon} \langle v \cdot \nabla_x \partial_x^\alpha \phi , \partial_x^\alpha g \rangle_{L^2_{x,v}} = \tfrac{\gamma}{\eps} \l \partial^\alpha_x \phi , \partial^\alpha_x \nabla_x \cdot \l g , v \r_{L^2_v} \r_{L^2_x} \\
	    & = - \gamma \l \partial^\alpha_x \phi , \partial_t \partial^\alpha_x \l g , 1 \r_{L^2_v} \r_{L^2_x} = - \l \partial^\alpha_x \phi , \partial_t \partial^\alpha_x \Delta_x \phi \r_{L^2_x} \\
	    & = \tfrac{1}{2} \tfrac{\d}{\d t} \| \nabla_x \partial^\alpha_x \phi \|^2_{L^2_x} \,.
	  \end{aligned}
	\end{equation}
	Consequently, we arrive at
	\begin{equation}\label{3.2.2}
	  \begin{aligned}
	    \tfrac{1}{2} \tfrac{\d}{\d t} ( \| \partial_x^\alpha g \|^2_{L^2_{x,v}} + \| \nabla_x \partial_x^\alpha \phi\|^2_{L^2_x} ) + \tfrac{\delta}{\epsilon^2} \| (I-P) \partial_x^\alpha g \|^2_{L^2_{x,v} (\nu)} \leq I_1 + I_2 + I_3
	  \end{aligned}
	\end{equation}
	for all $\alpha \in \mathbb{N}^3$ with $|\alpha| \leq N$.
	
	Next, let us estimate the terms $I_1$, $I_2$ and $I_3$ on the right-hand side of \eqref{3.2.1}. We split $g = P g + (I-P) g$, so that $ I_1 $ is decomposed into
	\begin{equation}
	  \begin{aligned}
	    I_1 \leq & \underbrace{ \tfrac{1}{\epsilon} \langle \partial_x^\alpha Q ( P g , P g ) , \partial_x^\alpha (I-P) g \rangle_{L^2_{x,v}} }_{I_{11}} + \underbrace{ \tfrac{1}{\epsilon} \langle \partial_x^\alpha Q ( P g , (I-P) g ) , \partial_x^\alpha (I-P) g \rangle_{L^2_{x,v}} }_{I_{12}} \\
	    + & \underbrace{ \tfrac{1}{\epsilon} \langle \partial_x^\alpha Q ( (I-P) g , P g ) , \partial_x^\alpha (I-P) g \rangle_{L^2_{x,v}} }_{I_{13}} + \underbrace{ \tfrac{1}{\epsilon} \langle \partial_x^\alpha Q ( (I-P) g , (I-P) g ) , \partial_x^\alpha (I-P) g \rangle_{L^2_{x,v}} }_{I_{14}} \,,
	  \end{aligned}
	\end{equation}
	where we also utilize the fact $Q (g, h) \in \mathcal{N}^\perp$. On the other hand, we plug $P g = a + b \cdot v + c |v|^2$ into the first term $I_{11}$ to get
	\begin{equation}
	  \begin{aligned}
	    & |I_{11}| \lesssim \tfrac{1}{\eps} \| \partial^\alpha_x Q ( P g, P g ) \|_{L^2_{x,v}} \| \partial_x^\alpha (I-P) g \|_{L^2_{x,v}} \\
	    \lesssim & \sum_{\alpha_1 \leq \alpha} \| \, | \partial^{\alpha_1}_x a | \, | \partial^{\alpha - \alpha_1}_x a| + | \partial^{\alpha_1}_x b | \, | \partial^{\alpha - \alpha_1}_x b| + | \partial^{\alpha_1}_x c | \, | \partial^{\alpha - \alpha_1}_x c| \|_{L^2_x} \| \partial_x^\alpha (I-P) g \|_{L^2_{x,v} (\nu)} \,.
	  \end{aligned}
	\end{equation}
	If $\alpha \neq 0$, the first factor is bounded by the calculus inequality
	\begin{equation*}
	  \begin{aligned}
	    \Big( \| \nabla_x a \|_{H^{N-1}_x} + \| \nabla_x b \|_{H^{N-1}_x} + \| \nabla_x c \|_{H^{N-1}_x} \Big) \Big( \| a \|_{H^N_x} + \| b \|_{H^N_x} + \| c \|_{H^N_x}  \Big) \\
	    \lesssim \| \nabla_x P g \|_{H^{N-1}_x L^2_v} \| P g \|_{H^N_x L^2_v} \lesssim \| \nabla_x P g \|_{H^{N-1}_x L^2_v} \| g \|_{H^N_x L^2_v} \,.
	  \end{aligned}
	\end{equation*}
	If $\alpha = 0$, the first factor is bounded by the H\"older inequality and the Sobolev inequality given in Lemma \ref{Lm-Sobolev-Inq}
	\begin{equation*}
	  \begin{aligned}
	    \| a^2 + |b|^2 + c^2 \|_{L^2_x} \lesssim & \big( \| a \|_{L^6_x} + \| b \|_{L^6_x} + \| c \|_{L^6_x} \big) \big( \| a \|_{L^3_x} + \| b \|_{L^3_x} + \| c \|_{L^3_x} \big) \\
	    \lesssim & \big( \| \nabla_x a \|_{L^2_x} + \| \nabla_x b \|_{L^2_x} + \| \nabla_x c \|_{L^2_x} \big) \big( \| a \|_{H^1_x} + \| b \|_{H^1_x} + \| c \|_{H^1_x} \big) \\
	    \lesssim & \| \nabla_x P g \|_{H^{N-1}_x L^2_v} \| g \|_{H^N_x L^2_v}
	  \end{aligned}
	\end{equation*}
	In summary, we get
	\begin{equation}\label{I11}
	  \begin{aligned}
	    | I_{11} | \lesssim \tfrac{1}{\eps} \| \nabla_x P g \|_{H^{N-1}_x L^2_v} \| g \|_{H^N_x L^2_v} \| \partial_x^\alpha (I-P) g \|_{L^2_{x,v} (\nu)} \lesssim \mathcal{E}_N^\frac{1}{2} (g, \phi) \mathcal{D}_{N, \eps} (g) \,,
	  \end{aligned}
	\end{equation}
	where the functionals $ \mathcal{E}_N (g, \phi) $ and $ \mathcal{D}_{N, \eps} (g) $ are defined in \eqref{Energy-E} and \eqref{Energy-D}, respectively. Furthermore, by employing the similar arguments in estimates \eqref{S4}, we easily have
	\begin{equation}\no
	  \begin{aligned}
	    | I_{12} | + |I_{13}| + |I_{14}| \lesssim & \tfrac{1}{\eps} \| g \|_{H^N_x L^2_v} \| (I-P) g \|^2_{H^N_x L^2_v (\nu)} \lesssim \eps \mathcal{E}_N^\frac{1}{2} (g, \phi) \mathcal{D}_{N, \eps} (g) \,.
	  \end{aligned}
	\end{equation}
	Consequently, we estimate that
	\begin{equation}\label{I-1}
	  \begin{aligned}
	    I_1 \leq |I_{11}| + | I_{12} | + |I_{13}| + |I_{14}| \lesssim \mathcal{E}_N^\frac{1}{2} (g, \phi) \mathcal{D}_{N, \eps} (g)
	  \end{aligned}
	\end{equation}
	for all $0 < \eps \leq 1$.
	One notices that integration by parts over $v \in \R^3$ and the decomposition $g = P g + (I-P) g$ imply
	\begin{equation}
	  \begin{aligned}
	    I_2 = & \tfrac{\gamma}{2} \l v \cdot \nabla_x \phi , |\partial^\alpha_x g|^2 \r_{L^2_{x,v}} \\
	    = & \underbrace{ \tfrac{\gamma}{2} \l v \cdot \nabla_x \phi , |\partial^\alpha_x P g|^2 \r_{L^2_{x,v}} }_{I_{21}} + \underbrace{ \tfrac{\gamma}{2} \l v \cdot \nabla_x \phi , |\partial^\alpha_x (I-P) g|^2 \r_{L^2_{x,v}} }_{I_{22}} \\
	    & + \underbrace{ \gamma \l v \cdot \nabla_x \phi , \partial^\alpha_x P g \partial^\alpha_x (I-P) g \r_{L^2_{x,v}} }_{I_{23}} \,.
	  \end{aligned}
	\end{equation}
	From Lemma \ref{Lmm-CF-nu} and the Sobolev embedding $H^2_x \hookrightarrow L^\infty_x$, we derive that
	\begin{equation}\label{I22}
	  \begin{aligned}
	    I_{22} \lesssim \| \nabla_x \phi \|_{L^\infty_x} \| \partial^\alpha_x (I-P) g \|^2_{L^2_{x,v} (\nu)} \lesssim \mathcal{E}_N^\frac{1}{2} (g, \phi) \mathcal{D}_{N, \eps} (g) \,.
	  \end{aligned}
	\end{equation}
	If $\alpha = 0$, the term $I_{23}$ is bounded by
	\begin{equation*}
	  \begin{aligned}
	    \| \nabla_x \phi \|_{L^3_x} \| v P g \|_{L^6_x L^2_v} \| (I-P) g \|_{L^2_{x,v}} \lesssim \| \nabla_x \phi \|_{H^1_x} \| \nabla_x ( a, b, c ) \|_{L^2_x} \| (I-P) g \|_{L^2_{x,v} (\nu)} \\
	    \lesssim \| \nabla_x \phi \|_{H^N_x} \| \nabla_x P g \|_{H^{N-1}_x L^2_v} \| (I-P) g \|_{H^N_x L^2_v (\nu)} \lesssim \mathcal{E}_N^\frac{1}{2} (g, \phi) \mathcal{D}_{N, \eps} (g) \,.
	  \end{aligned}
	\end{equation*}
	If $\alpha \neq 0$, the term $I_{23}$ is bounded by
	\begin{equation}\no
	  \begin{aligned}
	    & \| \nabla_x \phi \|_{L^\infty_x} \| v \partial^\alpha_x P g \|_{L^2_{x,v}} \| \partial^\alpha_x (I-P) g \|_{L^2_{x,v}} \\
	    & \lesssim \| \nabla_x \phi \|_{H^N_x} \| \nabla_x P g \|_{H^{N-1}_x L^2_v} \| (I-P) g \|_{H^N_x L^2_v (\nu)} \lesssim \mathcal{E}_N^\frac{1}{2} (g, \phi) \mathcal{D}_{N, \eps} (g) \,,
	  \end{aligned}
	\end{equation}
	where Lemma \ref{Lmm-CF-nu} and the Sobolev embedding $H^2_x \hookrightarrow L^\infty_x$ are also used. In summary, we have
	\begin{equation}\label{I23}
	  \begin{aligned}
	    I_{23} \lesssim \mathcal{E}_N^\frac{1}{2} (g, \phi) \mathcal{D}_{N, \eps} (g) \,.
	  \end{aligned}
	\end{equation}
	It remains to estimate the term $I_{21}$. If $\alpha \neq 0$, the analogous arguments in \eqref{I22} tells us that $I_{21}$ is bounded by
	\begin{equation}\no
	  \begin{aligned}
	    I_{21} \lesssim \| \nabla_x \phi \|_{H^N_x} \| \nabla_x P g \|_{H^{N-1}_x L^2_v}^2 \lesssim \mathcal{E}_N^\frac{1}{2} (g, \phi) \mathcal{D}_{N, \eps} (g) \,.
	  \end{aligned}
	\end{equation}
	If $\alpha = 0$, we use $P g = a + b \cdot v + c |v|^2$ and take integration with respect to $v \in \R^3$ to get
	\begin{equation*}
	  \begin{aligned}
	    I_{21} = & \tfrac{\gamma}{2} \l v \cdot \nabla_x \phi , ( a + b \cdot v + c |v|^2 )^2 \r_{L^2_{x,v}} = \gamma \l \nabla_x \phi \otimes b , ( a + c |v|^2 ) v \otimes v \r_{L^2_{x,v}} \\
	    = & \gamma \l \nabla_x \phi \otimes b , ( a + c |v|^2 ) ( A(v) + \tfrac{|v|^2}{3} I ) \r_{L^2_{x,v}}  = \gamma \l b \cdot \nabla_x \phi , a \l \tfrac{|v|^2}{3} , 1 \r_{L^2_v} + c \l \tfrac{|v|^4}{3} , 1 \r_{L^2_v} \r_{L^2_x} \\
	    = & \gamma \l \nabla_x \phi , b ( a + 5 c ) \r_{L^2_x} \,,
	  \end{aligned}
	\end{equation*}
	where we use $ \l a + c |v|^2 , A(v) \r_{L^2_v} = 0 $. Hence,
	\begin{equation}\label{I21-0}
	  \begin{aligned}
	    I_{21} = \gamma \l \nabla_x \phi , b ( a + 3 c ) \r_{L^2_x} + 2 \gamma \l \nabla_x \phi , b  c \r_{L^2_x} \,,
	  \end{aligned}
	\end{equation}
	where the first term is estimated by using Lemma \ref{Lm-Sobolev-Inq}
	\begin{equation}\label{I21-1}
	  \begin{aligned}
	    \gamma \l \nabla_x \phi , b ( a + 3 c ) \r_{L^2_x} \lesssim \| \nabla_x \phi \|_{L^3_x} \| b \|_{L^6_x} \| a + 3 c \|_{L^2_x} \lesssim \| \nabla_x \phi \|_{H^1_x} \| \nabla_x b \|_{L^2_x} \| a + 3 c \|_{L^2_x} \\
	    \lesssim \| \nabla_x \phi \|_{H^N_x} \| \nabla_x P g \|_{H^{N-1}_x L^2_v} \| \l g , 1 \r_{L^2_v} \|_{H^{N-1}_x} \lesssim \mathcal{E}_N^\frac{1}{2} (g, \phi) \mathcal{D}_{N, \eps} (g) \,,
	  \end{aligned}
	\end{equation}
	and the second term is bounded by
	\begin{equation}\label{I21-2}
	  \begin{aligned}
	    2 \gamma \l \nabla_x \phi , b  c \r_{L^2_x} \lesssim & \| \nabla_x \phi \|_{L^2_x} \| b \|_{L^6_x} \| c \|_{L^3} \lesssim \| \nabla_x \phi \|_{L^2_x} \| \nabla_x b \|_{L^2_x} \| c \|_{H^1} \\
	    \lesssim & \| c \|_{H^1_x} \| \nabla_x b \|_{L^2_x} \| a + 3c \|_{L^2_x} \\
	    \lesssim & \| P g \|_{H^N_x L^2_v} \| \l g , 1 \r_{L^2_v} \|_{H^{N-1}_x} \| \nabla_x P g \|_{H^{N-1}_x L^2_v} \\
	    \lesssim & \mathcal{E}_N^\frac{1}{2} (g, \phi) \mathcal{D}_{N, \eps} (g) \,.
	  \end{aligned}
	\end{equation}
	Here we make use of Lemma \ref{Lm-Sobolev-Inq} and the bound $\| \nabla_x \phi \|_{L^2_x} \lesssim \| \phi \|_{H^2_x} \lesssim \| a+3c \|_{L^2_x}$ derived from the Poisson equation $\Delta_x \phi = \gamma (a+3c)$. Plugging the estimates \eqref{I21-1} and \eqref{I21-2} into \eqref{I21-0} yields that $ I_{21} \lesssim \mathcal{E}_N^\frac{1}{2} (g, \phi) \mathcal{D}_{N, \eps} (g ) $ for the case $\alpha = 0$. In summary, we obtain that for all $|\alpha| \leq N$
	\begin{equation}\label{I21}
	  \begin{aligned}
	    I_{21} \lesssim \mathcal{E}_N^\frac{1}{2} (g, \phi) \mathcal{D}_{N, \eps} (g ) \,.
	  \end{aligned}
	\end{equation}
	Consequently, the bounds \eqref{I22}, \eqref{I23} and \eqref{I21} tell us
	\begin{equation}\label{I-2}
	  \begin{aligned}
	    I_2 \lesssim \mathcal{E}_N^\frac{1}{2} (g, \phi) \mathcal{D}_{N, \eps} (g ) \,.
	  \end{aligned}
	\end{equation}
	
	Notice that
	\begin{equation}\no
	  \begin{aligned}
	    I_3 = & \underbrace{ \gamma \sum_{0 \neq \alpha_1 < \alpha} C_\alpha^{\alpha_1} \l v \cdot \nabla_x \partial^{\alpha_1}_x \phi \partial^{\alpha - \alpha_1}_x g - \nabla_x \partial^{\alpha_1}_x \phi \cdot \nabla_v \partial^{\alpha - \alpha_1}_x g , \partial^\alpha_x g \r_{L^2_{x,v}} }_{I_{31}} \\
	    + & \underbrace{ \gamma \l v \cdot \nabla_x \partial^\alpha_x \phi g - \nabla_x \partial^\alpha_x \phi \cdot \nabla_v g , \partial^\alpha_x g \r_{L^2_{x,v}} }_{I_{32}} \,.
	  \end{aligned}
	\end{equation}
	It follows from Lemma \ref{Lm-Sobolev-Inq}, Lemma \ref{Lmm-CF-nu} and the decomposition $g = P g + (I-P) g$ that
	\begin{equation}\no
	  \begin{aligned}
	    I_{31} \lesssim & \sum_{0 \neq \alpha_1 < \alpha} \| \nabla_x \partial^{\alpha_1}_x \phi \|_{L^4_x} \| \partial^{\alpha-\alpha_1}_x g \|_{L^4_x L^2_v (\nu)} \| \partial^\alpha_x g \|_{L^2_{x,v} (\nu)} \\
	    + & \sum_{\alpha_1 < \alpha , |\alpha_1| = 1} \| \nabla_x \partial^{\alpha_1} \phi \|_{L^\infty_x} \| \nabla_v \partial^{\alpha-\alpha_1}_x g \|_{L^2_{x,v}} \| \partial^\alpha_x g \|_{L^2_{x,v}} \\
	    + & \sum_{\alpha_1 < \alpha , |\alpha_1| \geq 2} \| \nabla_x \partial^{\alpha_1} \phi \|_{L^4_x} \| \nabla_v \partial^{\alpha-\alpha_1}_x g \|_{L^4_x L^2_v} \| \partial^\alpha_x g \|_{L^2_{x,v}} \\
	    \lesssim & \| \nabla_x \phi \|_{H^N_x} \Big( \| \nabla_x g \|^2_{H^{N-1}_x L^2_v (\nu)} + \| \nabla_x \nabla_v g \|_{H^{N-2}_x L^2_v} \| \nabla_x g \|_{H^{N-1}_x L^2_v} \Big) \\
	    \lesssim & \| \nabla_x \phi \|_{H^N_x} \Big( \| (I-P) g \|^2_{H^N_{x,v} (\nu)} + \| \nabla_x P g \|^2_{H^{N-1}_x L^2_v} \Big) \\
	    \lesssim & \mathcal{E}_N^\frac{1}{2} (g, \phi) \mathcal{D}_{N,\eps} (g) \,,
	  \end{aligned}
	\end{equation}
	and
	  \begin{align*}
	    I_{32} \lesssim & \| \nabla_x \partial^\alpha_x \phi \|_{L^2_x} \| \partial^\alpha_x g \|_{L^2_{x,v} (\nu)} \Big( \| g \|_{L^\infty_x L^2_v (\nu)} + \| \nabla_v g \|_{L^\infty_x L^2_v} \Big) \\
	    \lesssim & \| \nabla_x \phi \|_{H^N_x} \| \nabla_x g \|_{H^{N-1}_x L^2_v (\nu)} \Big( \| P g \|_{L^\infty_x L^2_v} + \| (I-P) g \|_{H^2_{x,v} (\nu)} \Big) \\
	    \lesssim & \| \nabla_x \phi \|_{H^N_x} \| \nabla_x g \|_{H^{N-1}_x L^2_v (\nu)} \Big( \| \nabla_x P g \|_{H^1_x L^2_v} + \| (I-P) g \|_{H^2_{x,v} (\nu)} \Big) \\
	    \lesssim & \| \nabla_x \phi \|_{H^N_x} \Big( \| (I-P) g \|^2_{H^N_{x,v} (\nu)} + \| \nabla_x P g \|^2_{H^{N-1}_x L^2_v} \Big)\\
	    \lesssim & \mathcal{E}_N^\frac{1}{2} (g, \phi) \mathcal{D}_{N,\eps} (g) \,.
	  \end{align*}
	Consequently, we have
	\begin{equation}\label{I-3}
	  \begin{aligned}
	    I_3 = I_{31} + I_{32} \lesssim \mathcal{E}_N^\frac{1}{2} (g, \phi) \mathcal{D}_{N,\eps} (g) \,.
	  \end{aligned}
	\end{equation}
	We thereby know that plugging all estimates \eqref{I-1}, \eqref{I-2}, \eqref{I-3} into \eqref{3.2.2} and summing up for all $|\alpha| \leq N$ reduce to the bound \eqref{Spatial-Bnd}, and then complete the proof of Lemma \ref{Lmm-Spatial}.
\end{proof}

\subsection{Macroscopic energy estimates: fluid dissipations}

In this subsection, we will find a dissipative structure of the fluid part $P g$ by using the so-called micro-macro decomposition method for the VPB version, depending on the so-called {\em thirteen moments}, see \cite{GY-06} for instance. More precisely, one can obtain the evolution equation of each coefficient $(a,b,c)$ of $P g$, and the basis $\{e_k\}_{k=1}^{13} \subseteq L^2_v$ of the coefficients consisting of
\begin{equation}\label{e 13}
  \big\{ 1, v_i, v_i^2, v_i |v|^2 \ (i=1,2,3) ; v_i v_j \ (1 \leq i < j \leq 3) \big\} \subseteq L^2_v \,.
\end{equation}
Let  $\{e_k^*\}_{k=1}^{13}$  be a corresponding bi-orthogonal basis, i.e., a basis such that
\begin{equation*}
  \langle e_k^* , e_j^* \rangle_{L^2_v} = \delta_{jk} \,, \ j,k=1,2\cdots, 13 \,,
\end{equation*}
where $\delta_{jk} = 0$ if $j \neq k$ and $\delta_{jk} = 1$ if $j=k$. In fact, $e_k^*$ is a given linear combination of \eqref{e 13}. From \eqref{decomposition}, one thereby has the following macroscopic equations on coefficients $(a, b , c)$ of $P g$:
\begin{align}
  1 : & \qquad \epsilon \partial_t a =l_a + h_a + m_a \,, \label{Coef-Evl-a}\\
  v_i : & \qquad \epsilon \partial_t b_i + \partial_i a - \gamma \partial_i \phi = l_{bi} + h_{bi} + m_{bi} \label{Coef-Evl-b}\,, \\
  v_i^2 : & \qquad \epsilon \partial_t c + \partial_i b_i = l_i + h_i + m_i \,,\label{3.3.8} \\
  v_i v_j : & \qquad \partial_i b_j + \partial_j b_i = l_{ij} + h_{ij} + m_{ij} \quad (i \neq j) \,, \label{3.3.7} \\
  v_i |v|^2 : & \qquad \partial_i c = l_{ci} + h_{ci} + m_{ci} \,, \label{3.3.9}
\end{align}
where all terms on the right-hand side are the coefficients of $l$, $h$, $m$, defined in \eqref{l+h+m}, in terms of the basis \eqref{e 13}.

Our goal of this subsection is to devote ourselves finding a macroscopic dissipation. Similar to the case of the Boltzmann equation, the high order derivatives of the fluid coefficients $(a,b,c)$ are dissipative from the balance laws \eqref{1.7}, \eqref{Balance-b}, \eqref{1.9}, \eqref{Balance-c} and the coefficients equations of $a$, $b$, $c$ in \eqref{Coef-Evl-a}-\eqref{3.3.9}. However, $\l g , 1 \r_{L^2_v} = (a+3c)$ will be a further damping structure in VPB system, because of the Poisson equation \eqref{1.9}, namely, $\Delta_x \phi = \l g , 1 \r_{L^2_v} = (a+3c)$. More precisely, we will give the following lemma.

\begin{lemma}\label{Lmm-MM-Decomp}
	Assume $g (t,x,v)$ is the solution to the perturbed VPB system \eqref{VPB-g}-\eqref{IC-g} constructed in Lemma \ref{Lmm-Local}. Then
	\begin{equation}\label{MM-Inq}
	  \begin{split}
	    \eps \tfrac{\d}{\d t} E^{int}_N (g) + \| \nabla_x b \|^2_{H^{N-1}_x} + \| \nabla_x c \|^2_{H^{N-1}_x} + \| \nabla_x (a+3c) \|^2_{H^{N-1}_x} + \| a+3c \|^2_{H^{N-1}_x}  \\
	    \lesssim \tfrac{1}{\eps^2} \| (I-P) g \|^2_{H^N_x L^2_v (\nu)} + \mathcal{E}_N^\frac{1}{2} (g, \phi) \mathcal{D}_{N, \eps} (g)
	  \end{split}
	\end{equation}
	for all $0 < \eps \leq 1$ and $N \geq 3$, where the so-called interactive energy functional $E_N^{int} (g)$ is defined as
	\begin{equation}\label{Interactive-Energy}
	  \begin{aligned}
	    E_N^{int} (g) = \sum_{|\alpha| \leq N-1} \Bigg\{ & 32 \sum_{i=1}^3 \l \partial_i \partial^\alpha_x (I-P) g , \partial^\alpha_x c \zeta_i (v) \r_{L^2_{x,v}} - 2 \l \partial^\alpha_x \nabla_x \cdot b , \partial^\alpha_x (a+3c) \r_{L^2_x} \\
	    & + 32 \sum_{i,j=1}^3 \l \partial_j \partial^\alpha_x (I-P) g , \partial^\alpha_x c \zeta_{ij} (v) \r_{L^2_{x,v}} \Bigg\} \,.
	  \end{aligned}
	\end{equation}
	Here $\zeta_i (v)$ and $\zeta_{ij} (v)$ are some linear combinations of the basis \eqref{e 13}.
\end{lemma}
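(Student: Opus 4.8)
The plan is to run the \emph{thirteen-moment} micro--macro argument of \cite{GY-06}: apply $\partial_x^\alpha$ with $|\alpha|\le N-1$ to the macroscopic coefficient equations \eqref{Coef-Evl-a}--\eqref{3.3.9} and the balance laws \eqref{1.7}--\eqref{Balance-c}, take $L^2_x$ (or $L^2_{x,v}$) pairings against carefully chosen test quantities, and organize the result into a time derivative of the interactive functional $E_N^{int}(g)$ plus the advertised dissipations plus remainders. Apart from the $h$- and $m$-terms (which are quadratic or cubic and carry no $\eps$-singularity worse than those already in $\mathcal D_{N,\eps}$), the only substantively new ingredient compared with the Boltzmann case is the Poisson relation $\Delta_x\phi=\gamma(a+3c)$ in \eqref{1.9}, which converts one of the elliptic estimates into an elliptic-plus-zeroth-order one and thereby yields the extra damping $\|a+3c\|^2_{H^{N-1}_x}$.

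\textbf{(2) The dissipations $\|\nabla_x c\|^2_{H^{N-1}_x}$ and $\|\nabla_x b\|^2_{H^{N-1}_x}$.} From the $v_i|v|^2$ equation \eqref{3.3.9}, $\partial_x^\alpha\partial_i c=\partial_x^\alpha(l_{ci}+h_{ci}+m_{ci})$; pairing with $\partial_i\partial_x^\alpha c$ in $L^2_x$ and summing over $i$ and $|\alpha|\le N-1$ produces $\|\nabla_x c\|^2_{H^{N-1}_x}$ on the left. On the right, $l_{ci}$ is a fixed velocity moment of $l=-\eps\partial_t(I-P)g-v\cdot\nabla_x(I-P)g-\tfrac1\eps L(I-P)g$ (see \eqref{l+h+m}): the two $\partial_t$-free terms are bounded, via Young's inequality and Lemma \ref{Lmm-CF-nu}, by a small constant times $\|\nabla_x c\|^2_{H^{N-1}_x}$ plus $C\tfrac1{\eps^2}\|(I-P)g\|^2_{H^N_xL^2_v(\nu)}$, while the $-\eps\partial_t(I-P)g$ term is integrated by parts in $t$, which is exactly what generates the pieces $32\sum_i\langle\partial_i\partial_x^\alpha(I-P)g,\partial_x^\alpha c\,\zeta_i(v)\rangle_{L^2_{x,v}}$ of $\eps\tfrac{\d}{\d t}E_N^{int}(g)$, together with a remainder in which $\eps\partial_t c$ is eliminated via \eqref{3.3.8}; that remainder is a sum of products of dissipative or $\eps$-small quantities, hence $\lesssim\mathcal E_N^{1/2}(g,\phi)\mathcal D_{N,\eps}(g)$. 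The collisional coefficients $h_{ci}$ of $h=Q(g,g)$ are estimated by Lemma \ref{Lmm-Q}, and $m_{ci}$ of $m=\gamma\eps(v\cdot\nabla_x\phi\,g-\nabla_x\phi\cdot\nabla_vg)$ carry an explicit factor $\eps$; combined with Lemma \ref{Lm-Sobolev-Inq} and the Sobolev embeddings $H^2_x\hookrightarrow L^\infty_x$, $H^1_x\hookrightarrow L^4_x$, both are $\lesssim\mathcal E_N^{1/2}(g,\phi)\mathcal D_{N,\eps}(g)$. The bound for $\|\nabla_x b\|^2_{H^{N-1}_x}$ is obtained the same way from the $v_iv_j$ equations \eqref{3.3.7} ($i\ne j$) and the $v_i^2$ equations \eqref{3.3.8}, which together control the full symmetric gradient $\partial_ib_j+\partial_jb_i$ modulo an $\eps\partial_tc$ term (absorbed into the $\sum_{i,j}\langle\partial_j\partial_x^\alpha(I-P)g,\partial_x^\alpha c\,\zeta_{ij}(v)\rangle_{L^2_{x,v}}$ part of $\eps\tfrac{\d}{\d t}E_N^{int}$) and $l,h,m$-coefficients; the elementary identity $\|\nabla_x b\|^2_{L^2_x}\le\tfrac12\sum_{i,j}\|\partial_ib_j+\partial_jb_i\|^2_{L^2_x}$ on $\R^3$ then upgrades this to $\|\nabla_x b\|^2_{H^{N-1}_x}$.

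\textbf{(3) The damping $\|a+3c\|^2_{H^{N-1}_x}$ and the dissipation $\|\nabla_x(a+3c)\|^2_{H^{N-1}_x}$.} These come from differentiating in time the last piece $-2\langle\partial_x^\alpha\nabla_x\cdot b,\partial_x^\alpha(a+3c)\rangle_{L^2_x}$ of $E_N^{int}(g)$. The factor $\eps\partial_t$ falling on $(a+3c)$ is rewritten, using the mass law \eqref{1.7}, as $-\nabla_x\cdot b$, which contributes $2\|\partial_x^\alpha\nabla_x\cdot b\|^2_{L^2_x}\ge0$; the factor $\eps\partial_t$ falling on $b$ is rewritten, using the momentum law \eqref{Balance-b} and then \eqref{1.9}, as $\eps\nabla_x\!\cdot\!\partial_t b=-\Delta_x(a+5c)-\nabla_x\!\cdot\!\langle v\cdot\nabla_x(I-P)g,v\rangle_{L^2_v}+\gamma^2(a+3c)+\gamma\eps\nabla_x\!\cdot\!\big((a+3c)\nabla_x\phi\big)$, so that pairing with $\partial_x^\alpha(a+3c)$ and integrating by parts yields $-\|\nabla_x(a+3c)\|^2_{H^{N-1}_x}-\gamma^2\|a+3c\|^2_{H^{N-1}_x}$ up to: the cross term $-4\langle\nabla_x c,\nabla_x(a+3c)\rangle$, split by Young's inequality between the $\nabla_x(a+3c)$ and $\nabla_x c$ dissipations (this is where the large constant $32$ versus $2$ in \eqref{Interactive-Energy} enters the bookkeeping); the microscopic term $-2\langle\langle v\cdot\nabla_x(I-P)g,v\rangle_{L^2_v},\nabla_x(a+3c)\rangle$, bounded by $\delta\|\nabla_x(a+3c)\|^2_{H^{N-1}_x}+C_\delta\tfrac1{\eps^2}\|(I-P)g\|^2_{H^N_xL^2_v(\nu)}$ since $\eps\le1$; and the cubic term $\gamma\eps\langle\nabla_x\!\cdot\!((a+3c)\nabla_x\phi),a+3c\rangle\lesssim\mathcal E_N^{1/2}(g,\phi)\mathcal D_{N,\eps}(g)$. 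For $|\alpha|\ge1$ one first commutes $\partial_x^\alpha$ through the nonlinear terms by the Leibniz rule, generating only lower-order products treated as in step (2).

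\textbf{(4) Assembling, and the main obstacle.} Adding the three families of identities with the constants prescribed by \eqref{Interactive-Energy} cancels the dissipative cross terms and leaves a remainder controlled by $\tfrac1{\eps^2}\|(I-P)g\|^2_{H^N_xL^2_v(\nu)}+\mathcal E_N^{1/2}(g,\phi)\mathcal D_{N,\eps}(g)$, which is \eqref{MM-Inq}; the term $2\|\nabla_x\cdot b\|^2$ produced in step (3) is dominated by $\|\nabla_x b\|^2_{H^{N-1}_x}$ and absorbed by taking the $\nabla_x b$-estimate with a sufficiently large weight. Finally, Cauchy--Schwarz gives $|E_N^{int}(g)|\lesssim\|(I-P)g\|_{H^N_{x,v}}\big(\|\nabla_x b\|_{H^{N-1}_x}+\|\nabla_x c\|_{H^{N-1}_x}\big)+\|\nabla_x b\|_{H^{N-1}_x}\|a+3c\|_{H^{N-1}_x}\lesssim\mathcal E_N(g,\phi)$, since $\|\nabla_x b\|_{H^{N-1}_x}$, $\|\nabla_x c\|_{H^{N-1}_x}$, $\|a+3c\|_{H^{N-1}_x}$ are all $\lesssim\|\nabla_x Pg\|_{H^{N-1}_xL^2_v}\lesssim\|g\|_{H^N_xL^2_v}$ — this is the smallness used in Remark \ref{Rmk-MM}. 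I expect the main obstacle to be precisely this constant bookkeeping: one must choose the linear combination of all the testing identities (equivalently the coefficients in $E_N^{int}$) so that, after every Young inequality above, each of $\|\nabla_x b\|^2$, $\|\nabla_x c\|^2$, $\|\nabla_x(a+3c)\|^2$, $\|a+3c\|^2$ survives on the left with a strictly positive coefficient, while correctly separating the genuine time-derivative contributions to $\eps\tfrac{\d}{\d t}E_N^{int}(g)$ from the quadratic remainders; the appearance of the full (non-$\eps$-small) quantity $\tfrac1{\eps^2}\|(I-P)g\|^2_{H^N_xL^2_v(\nu)}$ on the right is harmless because it is already controlled with the large coefficient $\delta/\eps^2$ by the spatial estimate \eqref{Spatial-Bnd}.
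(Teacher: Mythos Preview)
Your proposal is correct and follows essentially the same thirteen-moment route as the paper: the $(a+3c)$-dissipation and the extra damping come from the divergence of the momentum balance \eqref{Balance-b} combined with the Poisson relation \eqref{1.9} and paired with $\partial_x^\alpha(a+3c)$ (your step (3) is exactly the paper's treatment of $R_1$--$R_4$ around \eqref{R1+R2+R3+R4}), the $c$-dissipation comes from \eqref{3.3.9} paired with $\partial_i\partial_x^\alpha c$, and the time-derivative pieces of $E_N^{int}$ arise from the $-\eps\partial_t(I-P)g$ part of $l$ in \eqref{l+h+m} together with the cross term $\langle\nabla_x\cdot b,a+3c\rangle$.

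The one place where you deviate slightly is the $\|\nabla_x b\|^2_{H^{N-1}_x}$ estimate. The paper does \emph{not} use the Korn-type identity $\|\nabla_x b\|^2_{L^2_x}\le\tfrac12\sum_{i,j}\|\partial_ib_j+\partial_jb_i\|^2_{L^2_x}$; instead it algebraically combines \eqref{3.3.8}--\eqref{3.3.7} with the balance law \eqref{Balance-c} to derive an explicit elliptic equation $-\Delta_x\partial_x^\alpha b_i-\tfrac13\partial_i\partial_x^\alpha\nabla_x\!\cdot\! b=\cdots$ (this is \eqref{3.3.5}) and then pairs with $\partial_x^\alpha b_i$. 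In that route the $\eps\partial_t c$ term is eliminated \emph{algebraically} via \eqref{Balance-c} rather than by integration by parts in $t$, so no additional macroscopic--macroscopic time-derivative term appears in $E_N^{int}$ from the $b$-block; the only time-derivative contribution there is $\eps\tfrac{\d}{\d t}\sum_{i,j}\langle\partial_j\partial_x^\alpha(I-P)g,\partial_x^\alpha b_i\,\zeta_{ij}(v)\rangle$ coming from the $-\eps\partial_t(I-P)g$ piece of $l$. (The statement \eqref{Interactive-Energy} has $\partial_x^\alpha c$ rather than $\partial_x^\alpha b_i$ in the third sum, which appears to be a typo; cf.\ \eqref{3.3.15}.) Your symmetric-gradient route also works, but you should either eliminate $\eps\partial_t c$ via \eqref{Balance-c} as the paper does, or be explicit that your $E_N^{int}$ then acquires an extra term of the form $\langle c,\nabla_x\!\cdot\! b\rangle$ and check that it is still bounded by $\mathcal E_N(g,\phi)$.
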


\begin{remark}\label{Rmk-MM}
	One notices that
	\begin{equation}\label{Interactive-Energy-Bnd}
	  \begin{aligned}
	    E_N^{int} (g) \lesssim \| g \|^2_{H^N_x L^2_v} \lesssim \mathcal{E}_N (g, \phi)
	  \end{aligned}
	\end{equation}
	and
	\begin{equation}\no
	  \begin{aligned}
	    \| \nabla_x b \|^2_{H^{N-1}_x} + \| \nabla_x c \|^2_{H^{N-1}_x} + \| \nabla_x (a+3c) \|^2_{H^{N-1}_x} + \| a+3c \|^2_{H^{N-1}_x} \\
	    \thicksim \| \nabla_x P g \|^2_{H^{N-1}_x L^2_v} + \| \l g , 1 \r_{L^2_v} \|^2_{H^{N-1}_x} \,.
	  \end{aligned}
	\end{equation}
	Then the inequality \eqref{MM-Inq} can be simplified as
	\begin{equation}\label{MM-Inq-Simple}
	  \begin{aligned}
	    c_0 \eps \tfrac{\d}{\d t} E^{int}_N (g) + \| \nabla_x P g \|^2_{H^{N-1}_x L^2_v} + \| \l g , 1 \r_{L^2_v} \|^2_{H^{N-1}_x} \\
	    \lesssim \tfrac{1}{\eps^2} \| (I-P) g \|^2_{H^N_x L^2_v (\nu)} + \mathcal{E}_N^\frac{1}{2} (g, \phi) \mathcal{D}_{N, \eps} (g)
	  \end{aligned}
	\end{equation}
	for some constant $c_0 > 0$, independent of $\eps$.
\end{remark}

\begin{proof}[Proof of Lemma \ref{Lmm-MM-Decomp}]
	From taking divergence operator $\nabla_x \cdot $ on the balance law \eqref{Balance-b} for $b$ and replacing $\Delta_x \phi$ by $\gamma (a+3c)$ via using the Poisson equation \eqref{1.9}, we deduce that
    \begin{equation}\label{3.3.1}
       \begin{aligned}
         - \Delta_x ( a+3c ) + \gamma^2 (a+3c) = & \eps \partial_t \nabla_x \cdot b - \gamma \eps \nabla_x \cdot [ \nabla_x \phi (a+3c) ] \\
         + & 2 \Delta_x c + \nabla_x \cdot \l v \cdot \nabla_x (I-P) g , v \r_{L^2_v} \,.
       \end{aligned}
    \end{equation}
    For any multi-index $\alpha \in \mathbb{N}^3$ with $  |\alpha| \leq N-1$, from applying the derivative operator $\partial_x^\alpha$ to the equation \eqref{3.3.1}, multiplying by $\partial^\alpha_x (a+3c)$ and integrating by parts over $x \in \R^3$, one deduces  that
    \begin{equation}\label{R1+R2+R3+R4}
      \begin{aligned}
        & \| \nabla_x \partial_x^\alpha (a+3c) \|^2_{L_x^2} + \gamma^2 \| \partial_x^\alpha (a+3c) \|^2_{L^2_x} \\
        = & \underbrace{ \l \epsilon \partial_t \nabla_x \cdot \partial_x^\alpha b , \partial_x^\alpha (a+3c) \r_{L^2_x} }_{R_1} + \underbrace{ 2 \l \Delta_x \partial_x^\alpha c , \partial_x^\alpha (a+3c) \r_{L^2_x} }_{R_2} \\
        & + \underbrace{ \l \nabla_x \partial_x^\alpha \cdot \langle v \cdot \nabla_x (I-P) g , v \rangle_{L^2_v} , \partial_x^\alpha (a+3c) \r_{L^2_x} }_{R_3} \\
        & + \underbrace{ \gamma \epsilon \l \partial_x^\alpha \nabla_x \cdot [ \nabla_x \phi (a+3c) ] , \partial_x^\alpha (a+3c) \r_{L^2_x} }_{R_4} \,.
      \end{aligned}
    \end{equation}
    For $R_1$, it follows from the balance law \eqref{1.7} for $a+3c$ and integration by parts over $x \in \R^3$ that
    \begin{equation}\label{R1}
      \begin{split}
         R_1 & = \tfrac{\d}{\d t} \l \epsilon \nabla_x \cdot \partial_x^\alpha b , \partial_x^\alpha (a+3c) \r_{L^2_x} - \l \partial_x^\alpha \nabla_x \cdot b , \epsilon \partial_x^\alpha \partial_t (a+3c) \r_{L^2_x} \\
         & = \epsilon \tfrac{\d}{\d t} \l \nabla_x \cdot \partial_x^\alpha b , \partial_x^\alpha (a+3c) \r_{L^2_x} + \l \partial_x^\alpha \nabla_x \cdot b , \partial_x^\alpha \nabla_x \cdot b \r_{L^2_x} \\
         & \leq \epsilon \tfrac{\d}{\d t} \l \nabla_x \cdot \partial_x^\alpha b , \partial_x^\alpha (a+3c) \r_{L^2_x} + \| \partial_x^\alpha \nabla_x b \|^2_{L^2_x} \,.
      \end{split}
    \end{equation}
    For $R_2$, $R_3$, by applying integration by parts and the H\"older inequality, one has
    \begin{equation}\label{R2}
      \begin{split}
        R_2 \leq 2 \| \nabla_x \partial_x ^\alpha c \|^2_{L^2_x} \| \nabla_x \partial_x^\alpha (a+3c) \|^2_{L^2_x} \leq 4 \| \nabla_x \partial_x^\alpha c \|^2_{L^2_x} + \tfrac{1}{8} \| \nabla_x \partial^\alpha_x (a+3c)\|^2_{L^2_x} \,,
      \end{split}
    \end{equation}
    and
    \begin{equation}\label{R3}
      \begin{split}
        R_3 & = \l \partial_x^\alpha \langle v \cdot \nabla_x (I-P) g , v \rangle_{L^2_v} , \nabla_x \partial _x^\alpha (a+3c) \r_{L^2_x} \\
        & \leq \| \nabla_x \partial^\alpha_x (I-P) g \|_{L^2_{x,v}} \| \nabla_x \partial^\alpha_x (a + 3c) \|_{L^2_x} \l |v|^4 , 1 \r_{L^2_v}^\frac{1}{2} \\
        & = \sqrt{15} \| \nabla_x \partial_x^\alpha (a+3c) \|_{L^2_{x,v}} \| \nabla_x \partial_x^\alpha (I-P) g \|_{L^2_x} \\
        & \leq 30 \| \nabla_x \partial_x^\alpha (I-P) g \|^2_{L^2_{x,v}} + \tfrac{1}{8} \| \nabla_x \partial_x^\alpha (a+3c) \|^2_{L^2_x} \,.
      \end{split}
    \end{equation}
    Since $|\alpha| \leq N-1$, the term $R_4$ can be estimated by using the calculus inequality
    \begin{equation}\label{R4}
      \begin{split}
        R_4 = & - \gamma \epsilon \l \partial_x^\alpha [ \nabla_x \phi (a+3c) ] , \nabla_x \partial_x^\alpha (a+3c) \r_{L^2_x} \\
        \lesssim & \eps \| \partial_x^\alpha [ \nabla_x \phi (a+3c) ] \|_{L^2_x} \| \nabla_x \partial_x^\alpha (a+3c) \|_{L^2_x} \\
        \lesssim & \| \nabla_x \phi \|_{H^N_x} \| a+3c \|_{H^{N-1}_x} \| \nabla_x (a,c) \|_{H^{N-1}_x} \\
        \lesssim & \eps \| \nabla_x \phi \|_{H^N_x} \| \l g , 1 \r_{L^2_v} \|_{H^{N-1}_x} \| \nabla_x P g \|_{H^{N-1}_x L^2_v} \\
        \lesssim & \eps \mathcal{E}_N^\frac{1}{2} (g, \phi) \mathcal{D}_{N,\eps} (g) \,.
      \end{split}
    \end{equation}
    Plugging the bounds \eqref{R1}, \eqref{R2}, \eqref{R3} and \eqref{R4} into \eqref{R1+R2+R3+R4} tells us that
    \begin{equation}\label{3.3.16}
      \begin{aligned}
        & - \eps \tfrac{\d}{\d t} \l \nabla_x \cdot \partial^\alpha_x b , \partial^\alpha_x (a+3c) \r_{L^2_x} + \tfrac{3}{4} \| \nabla_x \partial_x^\alpha (a+3c) \|^2_{L^2_x} + \gamma^2 \| \partial_x^\alpha (a+3c) \|^2_{L^2_x} \\
        & - \| \nabla_x \partial^\alpha_x b \|^2_{L^2_x} - 4 \| \nabla_x \partial^\alpha_x c \|^2_{L^2_x} - 30 \| \nabla_x \partial^\alpha_x (I-P) g \|^2_{L^2_{x,v}} \\
        & \lesssim \eps \mathcal{E}_N^\frac{1}{2} (g, \phi) \mathcal{D}_{N,\eps} (g)
      \end{aligned}
    \end{equation}
    holds for all $|\alpha| \leq N-1$.

    For all multi-indexes $\alpha \in \mathbb{N}^3$ satisfying $|\alpha| \leq N-1$, we derive from the $b$-equation \eqref{3.3.7} and $c$-evolution \eqref{3.3.8} that for any fixed index $i \in \{ 1,2,3 \}$
    \begin{equation}
      \begin{split}
        - \Delta_x \partial_x^\alpha b_i = & - \sum_{j \neq i} \partial_j \partial_j \partial_x^\alpha b_i - \partial_i \partial_i \partial_x^\alpha b_i \\
        = & \sum_{j \neq i} \partial_j \partial_x^\alpha ( \partial_i b_j - l_{ij} - h_{ij} - m_{ij} ) + \partial_i \partial_x^\alpha b_i ( \eps \partial_t c - l_i - h_i - m_i ) \\
        = & \sum_{j \neq i} \partial_i \partial_x^\alpha ( - \eps \partial_t c + l_j + h_j + m_j ) - \sum_{j \neq i} \partial_j \partial_x^\alpha ( l_{ij} + h_{ij} + m_{ij} ) \\
        & + \partial_i \partial^\alpha_x ( \eps \partial_t c - l_i - h_i - m_i ) \\
        = & - \eps \partial_t \partial_i \partial^\alpha_x c - \sum_{j \neq i} \partial_j \partial_x^\alpha ( l_{ij} + h_{ij} + m_{ij} ) \\
        & + \partial_i \partial^\alpha_x \Big[ \sum_{j \neq i} ( l_j + h_j + m_j ) - (l_i + h_i + m_i) \Big] \,.
      \end{split}
    \end{equation}
    Since $l_i$, $l_{ij}$, $h_i$, $h_{ij}$ and $m_i$, $m_{ij}$ are the coefficients of $l$, $h$ and $m$, respectively, there is a certain linear combination $\zeta_{ij} (v)$ of the basis \eqref{e 13} such that
    \begin{equation}
      \begin{aligned}
        \partial_i \partial^\alpha_x \Big[ \sum_{j \neq i} ( l_j + h_j + m_j ) - (l_i + h_i + m_i) \Big] - \sum_{j \neq i} \partial_j \partial_x^\alpha ( l_{ij} + h_{ij} + m_{ij} ) \\
        = \sum_{j=1}^3 \partial_j \partial^\alpha_x \l l + h + m , \zeta_{ij} (v) \r_{L^2_v} \,.
      \end{aligned}
    \end{equation}
    Combining the balance law \eqref{Balance-c} for $c$, we know that
    \begin{equation}\label{3.3.5}
      \begin{aligned}
        - \Delta_x \partial^\alpha_x b_i - \tfrac{1}{3} \partial_i \partial^\alpha_x \nabla_x \cdot b = &  \partial_i \partial^\alpha_x \Big( - \tfrac{\gamma}{3} \eps b \cdot \nabla_x \phi + \tfrac{1}{6} \l v \cdot \nabla_x (I-P) g , |v|^2 \r_{L^2_v} \Big) \\
        & + \sum_{j=1}^3 \partial_j \partial^\alpha_x \l l + h + m , \zeta_{ij} (v) \r_{L^2_v}
      \end{aligned}
    \end{equation}
    for any fixed index $i \in \{ 1,2,3 \}$ and all $\alpha \in \mathbb{N}^3$ with $|\alpha| \leq N-1$. We multiply \eqref{3.3.5} by $\partial_x^\alpha b_i$, sum up for the index $i$ and integrate over $x \in \mathbb{R}^3$. We thereby have
    \begin{equation}\label{T1+T2+T3+T4+T5}
      \begin{aligned}
        & \| \nabla_x \partial^\alpha_x b \|^2_{L^2_x} + \tfrac{1}{3} \| \partial^\alpha_x \nabla_x \cdot b \|^2_{L^2_x} \\
        = & \underbrace{ - \sum_{i,j=1}^3 \l \partial^\alpha_x l , \partial_j \partial^\alpha_x b_i \zeta_{ij} (v) \r_{L^2_{x,v}} }_{T_1} \ \underbrace{ - \sum_{i,j=1}^3 \l \partial^\alpha_x h , \partial_j \partial^\alpha_x b_i \zeta_{ij} (v) \r_{L^2_{x,v}} }_{T_2} \\
        & \underbrace{ - \sum_{i,j=1}^3 \l \partial^\alpha_x m , \partial_j \partial^\alpha_x b_i \zeta_{ij} (v) \r_{L^2_{x,v}} }_{T_3} + \underbrace{ \tfrac{\gamma}{3} \eps \l \partial^\alpha_x ( b \cdot \nabla_x \phi ) , \partial^\alpha_x \nabla_x \cdot b \r_{L^2_x} }_{T_4} \\
        & \underbrace{ - \tfrac{1}{6} \l v \cdot \nabla_x \partial^\alpha_x (I-P) g , |v|^2 \partial^\alpha_x \nabla_x \cdot b \r_{L^2_{x,v}} }_{T_5}
      \end{aligned}
    \end{equation}
    for $|\alpha| \leq N-1$.

    We next estimate each term $T_i\ (1\leq i\leq 5)$ in \eqref{T1+T2+T3+T4+T5}. Recalling the definition of $l$ in \eqref{l+h+m}, we have
    \begin{equation}\label{T1=T11+T12+T13}
      \begin{aligned}
        T_1 = & - \eps \tfrac{\d}{\d t} \sum_{i,j=1}^3 \l \partial_j \partial^\alpha_x (I-P) g , \partial^\alpha_x b_i \zeta_{ij} (v) \r_{L^2_{x,v}} \\
        & + \underbrace{ \sum_{i,j=1}^3 \l \partial_j \partial^\alpha_x (I-P) g , \eps \partial_t \partial^\alpha_x b_i \zeta_{ij} (v) \r_{L^2_{x,v}} }_{T_{11}} \\
        & + \underbrace{ \sum_{i,j=1}^3 \l v \cdot \nabla_x \partial^\alpha_x (I-P) g , \partial_j \partial^\alpha_x b_i \zeta_{ij} (v) \r_{L^2_{x,v}} }_{T_{12}} \\
        & + \underbrace{ \sum_{i,j=1}^3 \l \tfrac{1}{\eps} L \partial^\alpha_x (I-P) g , \partial_j \partial^\alpha_x b_i \zeta_{ij} (v) \r_{L^2_{x,v}} }_{T_{13}} \,.
      \end{aligned}
    \end{equation}
    It follows from plugging the balance law \eqref{Balance-b} into the term $T_{11}$ that
    \begin{equation}\no
      \begin{aligned}
        T_{11} = & \underbrace{ \sum_{i,j=1}^3 \l \partial_j \partial^\alpha_x (I-P) g , \gamma \partial^\alpha_x \partial_i \phi \zeta_{ij} (v) \r_{L^2_{x,v}} }_{T_{111}} \\
        & \underbrace{ - \sum_{i,j=1}^3 \l \partial_j \partial^\alpha_x (I-P) g , \partial_i \partial^\alpha_x (a+5c) \zeta_{ij} (v) \r_{L^2_{x,v}} }_{T_{112}} \\
        & \underbrace{ - \sum_{i,j=1}^3 \l \partial_j \partial^\alpha_x (I-P) g , \partial^\alpha_x \l v \cdot \nabla_x (I-P) g , v_i \r_{L^2_v} \zeta_{ij} (v) \r_{L^2_{x,v}} }_{T_{113}} \\
        & + \underbrace{ \sum_{i,j=1}^3 \l \partial_j \partial^\alpha_x (I-P) g , \partial^\alpha_x [ \gamma (a+3c) \partial_i \phi ] \zeta_{ij} (v) \r_{L^2_{x,v}} }_{T_{114}} \,.
      \end{aligned}
    \end{equation}
    Then the term $I_{111}$ is bounded by
    \begin{equation}\no
      \begin{aligned}
        T_{111} \lesssim \| \nabla_x \partial^\alpha_x (I-P) g \|_{L^2_{x,v}} \| \nabla_x \partial^\alpha_x \phi \|_{L^2_x} \lesssim \| (I-P) g \|_{H^N_x L^2_v (\nu)} \| \partial^\alpha_x (a+3c) \|_{L^2_x} \,,
      \end{aligned}
    \end{equation}
    where we make use of Lemma \ref{Lmm-CF-nu} and the estimate $\| \nabla_x \partial^\alpha_x \phi \|_{L^2_x} \lesssim \| \partial^\alpha_x \phi \|_{H^2_x} \lesssim \| \partial^\alpha_x (a+3c) \|_{L^2_x}$ derived from the Poisson equation $\Delta_x \phi = \gamma ( a + 3 c )$. For the term $I_{112}$, we estimate that
    \begin{equation}\no
      \begin{aligned}
        T_{112} \lesssim & \| \nabla_x \partial^\alpha_x  (I-P) g \|_{L^2_{x,v}} \big( \| \nabla_x \partial^\alpha_x (a+3c) \|_{L^2_x} + \| \nabla_x \partial^\alpha_x c \|_{L^2_x} \big) \\
        \lesssim & \| (I-P) g \|_{H^N_x L^2_v (\nu)} \big( \| \nabla_x \partial^\alpha_x (a+3c) \|_{L^2_x} + \| \nabla_x \partial^\alpha_x c \|_{L^2_x} \big) \,.
      \end{aligned}
    \end{equation}
    The terms $I_{113}$ and $I_{114}$ are bounded by
    \begin{equation}\no
      \begin{aligned}
        T_{113} \lesssim \| \nabla_x \partial^\alpha_x (I-P) g \|^2_{L^2_{x,v} } \lesssim \| (I-P) g \|^2_{H^N_x L^2_v (\nu)} \,,
      \end{aligned}
    \end{equation}
    and
    \begin{equation}\no
      \begin{aligned}
        T_{114} \lesssim & \| \nabla_x \partial^\alpha_x (I-P) g \|_{L^2_{x,v}} \| \partial^\alpha_x [ (a+3c) \nabla_x \phi ] \|_{L^2_x} \\
        \lesssim & \| (I-P) g \|_{H^N_x L^2_v (\nu)} \| (a+3c) \|_{H^{N-1}_x} \| \nabla_x \phi \|_{H^N_x}  \lesssim \eps \mathcal{E}_N^\frac{1}{2} (g, \phi) \mathcal{D}_{N,\eps} (g) \,.
      \end{aligned}
    \end{equation}
    In summary, we have
    \begin{equation}\label{T11}
      \begin{aligned}
        T_{11} \lesssim & \| (I-P) g \|_{H^N_x L^2_v (\nu)} \Big( \| \nabla_x \partial^\alpha_x (a+3c) \|_{L^2_x} + \| \partial^\alpha_x (a+3c) \|_{L^2_x} + \| \nabla_x \partial^\alpha_x c \|_{L^2_x} \Big) \\
        \lesssim & \| (I-P) g \|_{H^N_x L^2_v (\nu)}^2 + \eps \mathcal{E}_N^\frac{1}{2} (g, \phi) \mathcal{D}_{N,\eps} (g) \,.
      \end{aligned}
    \end{equation}
    Furthermore, one can easily estimate that
    \begin{equation}\label{T12}
      \begin{aligned}
        T_{12} \lesssim \| \nabla_x \partial^\alpha_x (I-P) g \|_{L^2_{x,v}} \| \nabla_x \partial^\alpha_x b \|_{L^2_x} \lesssim \| (I-P) g \|_{H^N_x L^2_v (\nu)} \| \nabla_x \partial^\alpha_x b \|_{L^2_x} \,,
      \end{aligned}
    \end{equation}
    and
    \begin{equation}\label{T13}
      \begin{aligned}
        T_{13} = & \tfrac{1}{\eps} \sum_{i,j=1}^3 \l \partial^\alpha_x (I-P) g , \partial_j \partial^\alpha_x b_i L \zeta_{ij} (v) \r_{L^2_{x,v}} \\
        \lesssim & \tfrac{1}{\eps} \| \partial^\alpha_x (I-P) g \|_{L^2_{x,v}} \| \nabla_x \partial^\alpha_x b \|_{L^2_x} \lesssim \tfrac{1}{\eps} \| (I-P) g \|_{H^N_x L^2_v (\nu)} \| \nabla_x \partial^\alpha_x b \|_{L^2_x} \,,
      \end{aligned}
    \end{equation}
    where we use the self-adjoint property of the linearized Boltzmann collision operator $L$. From plugging the bounds \eqref{T11}, \eqref{T12} and \eqref{T13} into the equality \eqref{T1=T11+T12+T13}, one deduces that
    \begin{equation}\label{T1}
      \begin{aligned}
        T_1 & + \eps \tfrac{\d}{\d t} \sum_{i,j=1}^3 \l \partial_j \partial^\alpha_x (I-P) g , \partial^\alpha_x b_i \zeta_{ij} (v) \r_{L^2_{x,v}} \\
        \lesssim & \| (I-P) g \|_{H^N_x L^2_v (\nu)} \Big( \| \nabla_x \partial^\alpha_x (a+3c) \|_{L^2_x} + \| \partial^\alpha_x (a+3c) \|_{L^2_x} + \| \nabla_x \partial^\alpha_x c \|_{L^2_x} \Big) \\
        + & \| (I-P) g \|_{H^N_x L^2_v (\nu)}^2 + \tfrac{1}{\eps} \| (I-P) g \|_{H^N_x L^2_v (\nu)} \| \nabla_x \partial^\alpha_x b \|_{L^2_x} + \eps \mathcal{E}_N^\frac{1}{2} (g, \phi) \mathcal{D}_{N,\eps} (g)
      \end{aligned}
    \end{equation}
    for all $0 < \eps \leq 1$ and $|\alpha| \leq N-1$.

    Recalling the definition of $h$ in \eqref{l+h+m}, we derive from the similar estimates in \eqref{I-1} that
    \begin{equation}\label{T2}
      \begin{aligned}
        T_2 \lesssim & \| g \|_{H^N_x L^2_v} \| \nabla_x \partial^\alpha_x b \|_{L^2_x} \Big( \| (I-P) g \|_{H^N_x L^2_v (\nu)} + \| \nabla_x P g \|_{H^{N-1}_x L^2_v} \Big) \\
        \lesssim & \| g \|_{H^N_x L^2_v} \Big( \| (I-P) g \|_{H^N_x L^2_v (\nu)}^2 + \| \nabla_x P g \|_{H^{N-1}_x L^2_v}^2 \Big) \lesssim \mathcal{E}_N^\frac{1}{2} (g, \phi) \mathcal{D}_{N,\eps} (g) \,.
      \end{aligned}
    \end{equation}

    We next estimate the term $T_3$. By definition of $m$ in \eqref{l+h+m}, we have
    \begin{equation}\label{T3}
      \begin{aligned}
        T_3 = & - \gamma \eps \sum_{i,j=1}^3 \l \partial^\alpha_x ( v \cdot \nabla_x \phi - \nabla_x \phi \cdot \nabla_v g ) , \partial_j \partial^\alpha_x b_i \zeta_{ij} (v) \r_{L^2_{x,v}} \\
        \lesssim & \eps \| \nabla_x \partial^\alpha_x  b \|_{L^2_x} \| \partial^\alpha_x ( v \cdot \nabla_x \phi g - \nabla_x \phi \cdot \nabla_v g ) \|_{L^2_{x,v}} \\
        \lesssim & \eps \| \nabla_x \partial^\alpha_x  b \|_{L^2_x} \| \nabla_x \phi \|_{H^{N-1}_x} \| g \|_{H^N_{x,v} (\nu)} \\
        \lesssim & \eps \| \nabla_x \phi \|_{H^{N-1}_x} \| a + 3 c \|_{H^{N-1}_x} \Big( \| P g \|_{H^N_x L^2_v} + \| (I-P) g \|_{H^N_{x,v} (\nu)} \Big) \\
        \lesssim & \eps \mathcal{E}_N^\frac{1}{2} (g, \phi) \mathcal{D}_{N,\eps} (g) \,,
      \end{aligned}
    \end{equation}
    where we use the bound $\| \nabla_x \phi \|_{H^{N-1}_x} \lesssim \|a+3c\|_{H^{N-1}_x}$ derived from the Poisson equation $\Delta_x \phi = \gamma (a+3c)$. Similarly in \eqref{T3}, the term $T_4$ can be bounded by
    \begin{equation}\label{T4}
      \begin{aligned}
        T_4 \lesssim & \eps \| b \|_{H^{N-1}_x} \| \nabla_x \phi \|_{H^{N-1}_x} \| \nabla_x b \|_{H^{N-1}_x} \lesssim \eps \| b \|_{H^N_x} \| a+3c \|_{H^{N-1}_x} \| \nabla_x b \|_{H^{N-1}_x} \\
        \lesssim & \eps \| g \|_{H^N_x L^2_v} \| \l g,1 \r_{L^2_v} \|_{H^{N-1}_x} \| \nabla_x P g \|_{H^{N-1}_x L^2_v} \lesssim \eps \mathcal{E}_N^\frac{1}{2} (g, \phi) \mathcal{D}_{N, \eps} (g) \,.
      \end{aligned}
    \end{equation}
    Furthermore, the term $T_5$ is easily to be estimated that
    \begin{equation}\label{T5}
      \begin{aligned}
        T_5 \lesssim & \| \partial^\alpha_x \nabla_x \cdot b \|_{L^2_x} \| \nabla_x \partial^\alpha_x (I-P) g \|_{L^2_{x,v}} \lesssim \| (I-P) g \|_{H^N_x L^2_v (\nu)} \| \nabla_x \partial^\alpha_x b \|_{L^2_x} \,.
      \end{aligned}
    \end{equation}
    Finally, we substitute the inequalities \eqref{T1}, \eqref{T2}, \eqref{T3}, \eqref{T4} and \eqref{T5} into \eqref{T1+T2+T3+T4+T5}. The Young's inequality tells us that
    \begin{equation}\label{3.3.15}
      \begin{aligned}
        & \tfrac{1}{2} \| \nabla_x \partial^\alpha_x b \|_{L^2_x}^2 + \tfrac{1}{3} \| \partial^\alpha_x \nabla_x \cdot b \|^2_{L^2_x} + \eps \tfrac{\d}{\d t} \sum_{i,j=1}^3 \l \partial_j \partial^\alpha_x (I-P) g , \partial^\alpha_x b_i \zeta_{ij} (v) \r_{L^2_{x,v}} \\
        \lesssim & \| (I-P) g \|_{H^N_x L^2_v (\nu)} \Big( \| \nabla_x \partial^\alpha_x (a+3c) \|_{L^2_x} + \| \partial^\alpha_x (a+3c) \|_{L^2_x} + \| \nabla_x \partial^\alpha_x c \|_{L^2_x} \Big) \\
        + & \tfrac{1}{\eps^2} \| (I-P) g \|^2_{H^N_x L^2_v (\nu)} + \mathcal{E}_N^\frac{1}{2} (g, \phi) \mathcal{D}_{N, \eps} (g)
      \end{aligned}
    \end{equation}
    for all $0 < \eps \leq 1$ and $|\alpha| \leq N-1$.

    It remains to estimate the norm $\| \nabla_x \partial^\alpha_x c \|_{L^2_x}$ for all $|\alpha| \leq N-1$. From \eqref{3.3.9}, we have
    \begin{equation}\label{3.3.12}
      \begin{aligned}
        - \Delta_x \partial^\alpha_x c = - \sum_{i=1}^3 \partial_i \partial^\alpha_x \big(  l_{ci} + h_{ci} + m_{ci} \big) = \sum_{i=1}^3 \partial_i \partial^\alpha_x \l l + h + m , \zeta_i (v) \r_{L^2_v}
      \end{aligned}
    \end{equation}
    for some certain linear combinations $\zeta_i (v)$ of the basis in \eqref{e 13}. Multiplying \eqref{3.3.12} by $\nabla_x\partial_x^\alpha c$ yields
    \begin{equation}\label{M1+M2+M3}
      \begin{split}
        \| \nabla_x \partial_x^\alpha c \|^2_{L^2_x} = & \underbrace{ - \sum_{i=1}^3 \l \partial^\alpha_x l , \partial_i \partial^\alpha_x c \zeta_i (v) \r_{L^2_{x,v}} }_{M_1} \ \underbrace{ - \sum_{i=1}^3 \l \partial^\alpha_x h , \partial_i \partial^\alpha_x c \zeta_i (v) \r_{L^2_{x,v}} }_{M_2} \\
        & \underbrace{ - \sum_{i=1}^3 \l \partial^\alpha_x m , \partial_i \partial^\alpha_x c \zeta_i (v) \r_{L^2_{x,v}} }_{M_3}
      \end{split}
    \end{equation}
    for all $|\alpha| \leq N-1$.

    We next estimate each term in \eqref{M1+M2+M3} as follows. For $M_1$, by plugging $l$ defined in \eqref{l+h+m}, we have
    \begin{equation}
      \begin{aligned}
        M_1 = & - \eps \tfrac{\d}{\d t} \sum_{i=1}^3  \l \partial_i \partial^\alpha_x (I-P) g , \ \partial^\alpha_x c \zeta_i (v) \r_{L^2_{x,v}} \\
        + & \sum_{i=1}^3 \l \partial_i \partial^\alpha_x (I-P) g , \tfrac{1}{3} \partial^\alpha_x \nabla_x \cdot b \zeta_i (v) - \tfrac{\gamma}{3} \eps \partial^\alpha_x ( b \cdot \nabla_x \phi ) \zeta_i (v) \r_{L^2_{x,v}} \\
        + & \sum_{i=1}^3 \l \partial_i \partial^\alpha_x (I-P) g , \tfrac{1}{6} \partial^\alpha_x \l v \cdot \nabla_x (I-P) g , |v|^2 \r_{L^2_v} \zeta_i (v) \r_{L^2_{x,v}} \\
        + & \sum_{i=1}^3 \l v \cdot \nabla_x \partial^\alpha_x (I-P) g + \tfrac{1}{\eps} L \partial^\alpha_x (I-P) g , \partial_i \partial^\alpha_x c \zeta_i (v) \r_{L^2_{x,v}} \,.
      \end{aligned}
    \end{equation}
    Then, by employing the analogous arguments in \eqref{T1}, \eqref{T2} and \eqref{T3}, one can estimate that
    \begin{equation}
      \begin{aligned}
        & M_1 + \eps \tfrac{\d}{\d t} \sum_{i=1}^3  \l \partial_i \partial^\alpha_x (I-P) g , \ \partial^\alpha_x c \zeta_i (v) \r_{L^2_{x,v}} \\
        \lesssim & \tfrac{1}{\eps} \| (I-P) g \|_{H^N_x L^2_v (\nu)} \| \nabla_x \partial^\alpha_x c \|_{L^2_x} + \| (I-P) g \|_{H^N_x L^2_v (\nu)}^2 \\
        + & \| (I-P) g \|_{H^N_x L^2_v (\nu)} \| \nabla_x \partial^\alpha_x b \|_{L^2_x} + \mathcal{E}^\frac{1}{2} (g, \phi) \mathcal{D}_{N,\eps} (g) \,,
      \end{aligned}
    \end{equation}
    and
    \begin{equation}
      \begin{aligned}
        M_2 + M_3 \lesssim \mathcal{E}^\frac{1}{2} (g, \phi) \mathcal{D}_{N,\eps} (g) \,.
      \end{aligned}
    \end{equation}
    Consequently, we have
    \begin{equation}\label{3.3.14}
      \begin{split}
        & \tfrac{1}{2} \| \nabla_x \partial_x^\alpha c \|^2_{L^2_x} + \eps \tfrac{\d}{\d t} \sum_{i=1}^3  \l \partial_i \partial^\alpha_x (I-P) g , \ \partial^\alpha_x c \zeta_i (v) \r_{L^2_{x,v}} \\
        & \lesssim \| (I-P) g \|_{H^N_x L^2_v (\nu)} \| \nabla_x \partial^\alpha_x b \|_{L^2_x} + \tfrac{1}{\eps^2} \| (I-P) g \|_{H^N_x L^2_v (\nu)}^2 + \mathcal{E}^\frac{1}{2} (g, \phi) \mathcal{D}_{N,\eps} (g)
      \end{split}
    \end{equation}
    for all $0 < \eps \leq 1$ and $|\alpha| \leq N-1$.

    From adding the bounds \eqref{3.3.15}, \eqref{3.3.14} to $\tfrac{1}{16}$ times of \eqref{3.3.16}, summing up for $|\alpha| \leq N-1$ and the Young's inequality, we deduce that
    \begin{equation}
      \begin{aligned}
        \tfrac{\eps}{32} \tfrac{\d}{\d t} E^{int}_N (g) + \tfrac{1}{32} \Big( \| \nabla_x b \|^2_{H^{N-1}_x} + \| \nabla_x c \|^2_{H^{N-1}_x} + \| \nabla_x (a+3c) \|^2_{H^{N-1}_x} + \| a+3c \|^2_{H^{N-1}_x} \Big) \\
        \lesssim \tfrac{1}{\eps^2} \| (I-P) g \|^2_{H^N_x L^2_v (\nu)} + \mathcal{E}_N^\frac{1}{2} (g, \phi) \mathcal{D}_{N, \eps} (g)
      \end{aligned}
    \end{equation}
    for all $0 < \eps \leq 1$ and $N \geq 4$, where the so-called interactive energy functional $E^{int}_N (g)$ is given in \eqref{Interactive-Energy}. Thus the proof of Lemma \ref{Lmm-MM-Decomp} is finished.
\end{proof}

\subsection{Energy estimates for $(x,v)$-mixed derivatives}

In this subsection, our goal is to derive a closed energy estimate. Recalling the estimates obtained in the previous two subsections, we only require to estimate the energy of $(x,v)$-mixed derivatives of the kinetic part $(I-P) g$. We so first give the following lemma.

\begin{lemma}\label{Lmm-Mixed-Est}
	Assume that $g (t,x,v)$ is the solution to the perturbed VPB system \eqref{VPB-g}-\eqref{IC-g} constructed in Lemma \ref{Lmm-Local}. Let $N \geq 3$ be any fixed integer. For any given $0 \leq k \leq N$ and $|\beta| \leq k$, there are positive constants $\varrho_k$, $\varrho_k^*$, $C_{|\beta|}$, $r_k$, $r_k^*$, $ \xi_k $ and $\eta_0$, independent of $\eps$, such that
	\begin{equation}\label{Mixed-Inq-Closed}
	  \begin{aligned}
	    \tfrac{1}{2} \tfrac{\d}{\d t} & \Bigg( \varrho_k \| g \|^2_{H^N_x L^2_v} + \varrho_k \| \nabla_x \phi \|^2_{H^N_x} + \varrho_k^* \eta E_N^{int} (g) + \sum_{\substack{|\alpha| + |\beta| \leq N \\ |\beta| \leq k}} C_{|\beta|} \eta^2 \| \partial^\alpha_x \partial^\beta_v (I-P) g \|^2_{L^2_{x,v}} \Bigg) \\
	    & + \tfrac{r_k}{\eps^2} \| (I-P) g \|^2_{H^N_x L^2_v (\nu)} + r_k^* \eta \| \nabla_x P g \|^2_{H^{N-1}_x L^2_v} + r_k^* \eta \| \l g ,1 \r_{L^2_v} \|^2_{H^{N-1}_x} \\
	    & + \tfrac{\xi_k}{\eps^2} \eta^2 \sum_{\substack{|\alpha| + |\beta| \leq N \\ |\beta| \leq k}} \| \partial^\alpha_x \partial^\beta_v (I-P) g \|^2_{L^2_{x,v} (\nu)} \lesssim \mathcal{E}_N^\frac{1}{2} (g, \phi) \mathcal{D}_{N, \eps} (g)
	  \end{aligned}
	\end{equation}
\end{lemma}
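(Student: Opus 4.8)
The plan is to combine the three partial estimates from Lemmas \ref{Lmm-Spatial}, \ref{Lmm-MM-Decomp}, and from the mixed-derivative microscopic evolution \eqref{2.8}-type equation, and to close the energy inequality by an induction on the order $k$ of $v$-derivatives, exactly as described in the ``Difficulties and ideas'' subsection. I would first derive, for any fixed $\alpha,\beta\in\mathbb{N}^3$ with $\beta\neq 0$ and $|\alpha|+|\beta|\le N$, the differential inequality obtained by applying $(I-P)$ to the $g$-equation \eqref{VPB-g-drop-eps}, then $\partial^\alpha_x\partial^\beta_v$, and finally taking the $L^2_{x,v}$-inner product with $\partial^\alpha_x\partial^\beta_v(I-P)g$. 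This is the analogue of \eqref{2.9} but for the true nonlinear system rather than the iteration scheme; the term-by-term bookkeeping is identical in spirit to the $S_1,\dots,S_7$ estimates, so each term is $\lesssim \mathcal{E}_N^{1/2}(g,\phi)\mathcal{D}_{N,\eps}(g)$ plus the coercivity defect $\tfrac{\delta_2}{\eps^2}\sum_{\beta'<\beta}\|\partial^\alpha_x\partial^{\beta'}_v(I-P)g\|^2_{L^2_{x,v}}$ coming from Lemma \ref{Lmm-Coercivity-L}, together with the benign cross term $\tfrac1\eps\|(I-P)g\|_{H^N_xL^2_v(\nu)}\|(I-P)g\|_{\widetilde H^N_{x,v}(\nu)}$ and $\tfrac1\eps\|g\|_{H^N_xL^2_v}\|(I-P)g\|^2$ type products, all of which are swallowed by $\tfrac{r_k}{\eps^2}\|(I-P)g\|^2_{H^N_xL^2_v(\nu)}$ and the mixed dissipation after Young's inequality (using $0<\eps\le 1$ so that $\tfrac1\eps\le\tfrac1{\eps^2}$).

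Next I would run the induction. For $k=0$ the claim is precisely the combination of Lemma \ref{Lmm-Spatial} (multiplied by a large constant $\varrho_0$) with $\eta$ times \eqref{MM-Inq-Simple} from Remark \ref{Rmk-MM}: choosing $\varrho_0$ large and then $\eta$ small, the $\tfrac1{\eps^2}\|(I-P)g\|^2_{H^N_xL^2_v(\nu)}$ on the right of \eqref{MM-Inq-Simple} is dominated by the kinetic dissipation $\tfrac{\delta}{\eps^2}\varrho_0\|(I-P)g\|^2_{H^N_xL^2_v(\nu)}$ from \eqref{Spatial-Bnd}, leaving the fluid dissipation $r_0^*\eta(\|\nabla_xPg\|^2_{H^{N-1}_xL^2_v}+\|\l g,1\r_{L^2_v}\|^2_{H^{N-1}_x})$ on the left; the interactive energy is absorbed into the Lyapunov functional using \eqref{Interactive-Energy-Bnd} and the fact that $\eta$ is small. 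For the inductive step, suppose \eqref{Mixed-Inq-Closed} holds at level $k-1$; I add to it $C_k\eta^2$ times the sum over $|\alpha|+|\beta|\le N$, $|\beta|=k$ of the mixed differential inequality derived above. The only dangerous new term is the coercivity defect $\tfrac{\delta_2}{\eps^2}\sum_{\beta'<\beta}\|\partial^\alpha_x\partial^{\beta'}_v(I-P)g\|^2_{L^2_{x,v}}$, but since $|\beta'|<|\beta|=k$ these norms are already controlled by the term $\tfrac{\xi_{k-1}}{\eps^2}\eta^2\sum_{|\beta|\le k-1}\|\partial^\alpha_x\partial^\beta_v(I-P)g\|^2_{L^2_{x,v}(\nu)}$ in the level-$(k-1)$ dissipation; choosing $C_k$ small relative to $\xi_{k-1}$ (equivalently inserting a fresh small factor at each level) makes this absorption legitimate. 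Finally one renames the resulting constants $\varrho_k,\varrho_k^*,C_{|\beta|},r_k,r_k^*,\xi_k$ and notes that $\eta$, once fixed at step $k=0$, is never changed; taking $k=N$ at the end yields the stated inequality with the full $\widetilde H^N_{x,v}(\nu)$ dissipation, since the sum over $1\le|\beta|\le N$ of $\|\partial^\alpha_x\partial^\beta_v(I-P)g\|^2_{L^2_{x,v}(\nu)}$ is exactly $\|(I-P)g\|^2_{\widetilde H^N_{x,v}(\nu)}$.

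The main obstacle is the bookkeeping of constants in the induction: one must verify that at each level the newly generated two-order-singular defect $\tfrac1{\eps^2}\sum_{\beta'<\beta}\|\cdots\|^2$ genuinely has $v$-order strictly below $k$, so that it lands in the previously-established dissipation and not in the quantity being estimated; this forces the scheme of successively smaller coefficients $C_k$ and the ordering ``spatial estimate first, then macroscopic, then mixed by induction on $|\beta|$.'' A secondary point requiring care is that the macroscopic estimate \eqref{MM-Inq} contains $\|\nabla_v(I-P)g\|_{H^{N-1}_xL^2_v}$-type contributions only implicitly through $\mathcal{D}_{N,\eps}$ and $\mathcal{E}_N$, so one must keep $\eta$ small enough that the fluid-dissipation coefficient $r_k^*\eta$ stays positive after all absorptions, while $\varrho_k$ stays large enough that $\tfrac{r_k}{\eps^2}\|(I-P)g\|^2_{H^N_xL^2_v(\nu)}$ has positive coefficient; since these are finitely many inequalities with $N$ fixed, a single choice of $(\varrho_k,\eta,C_k,\dots)$ works, and the argument closes.
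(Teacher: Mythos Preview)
Your proposal is correct and follows essentially the same route as the paper: apply $(I-P)$ to the $g$-equation, take $\partial^\alpha_x\partial^\beta_v$ and the $L^2_{x,v}$-inner product to obtain the analogue of \eqref{W1+W2+W3+W4+W5+W6+W7}, bound the seven resulting terms (the paper's $W_1$--$W_7$, which parallel your $S_1$--$S_7$), combine with \eqref{Spatial-Bnd} and $\eta$ times \eqref{MM-Inq-Simple} plus $\eta^2$ times the mixed inequality, and then induct on $|\beta|=k$ to absorb the coercivity defect $\tfrac{1}{\eps^2}\sum_{\beta'<\beta}\|\partial^\alpha_x\partial^{\beta'}_v(I-P)g\|^2$ into the lower-order dissipation. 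In fact you spell out the induction mechanism (decreasing constants $C_k$ to absorb the defect by the level-$(k-1)$ dissipation) more explicitly than the paper, which simply writes ``For simplicity, we omit the details of the induction.''
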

for all $0 < \eps \leq 1$ and $0 < \eta \leq \eta_0$.

\begin{remark}\label{Rmk-Mixed}
	The small $\eta \in (0, \eta_0 ]$ mentioned in Lemma \ref{Lmm-Mixed-Est} will be chosen smaller later, such that the unsigned interactive energy functional $\varrho_k^* \eta E_N^{int} (g)$ can be dominated by $\varrho_k \| g \|^2_{H^N_x L^2_v}$ due to the bound \eqref{Interactive-Energy-Bnd} in Remark \ref{Rmk-MM}. More precisely, we introduce the following instant energy functional
	\begin{equation}
	  \begin{aligned}
	    \mathbb{E}_{N, \eta} (g, \phi) = & \varrho_N \| g \|^2_{H^N_x L^2_v} + \varrho_N \| \nabla_x \phi \|^2_{H^N_x} + \varrho_N^* \eta E_N^{int} (g) \\
	    & + \sum_{\substack{|\alpha| + |\beta| \leq N \\ |\beta| \leq N}} C_{|\beta|} \eta^2 \| \partial^\alpha_x \partial^\beta_v (I-P) g \|^2_{L^2_{x,v}}
	  \end{aligned}
	\end{equation}
	and the instant energy dissipative rate functional
	\begin{equation}
	  \begin{aligned}
	    \mathbb{D}_{N, \eps, \eta} (g) = & \tfrac{r_N}{\eps^2} \| (I-P) g \|^2_{H^N_x L^2_v (\nu)} + r_N^* \eta \| \nabla_x P g \|^2_{H^{N-1}_x L^2_v} + r_N^* \eta \| \l g ,1 \r_{L^2_v} \|^2_{H^{N-1}_x} \\
	    & + \tfrac{\xi_k}{\eps^2} \eta^2 \sum_{\substack{|\alpha| + |\beta| \leq N \\ |\beta| \leq N}} \| \partial^\alpha_x \partial^\beta_v (I-P) g \|^2_{L^2_{x,v} (\nu)} \,.
	  \end{aligned}
	\end{equation}
	It is easy to verify that there is a small $\eta_1 \in (0, \eta_0 ]$, independent of $\eps$, such that
	\begin{equation}\label{Equivalences-Energies}
	  \begin{aligned}
	    \mathbb{E}_{N, \eta_1} (g, \phi) \thicksim \mathcal{E}_N (g, \phi) \,, \quad \mathbb{D}_{N, \eps, \eta_1} (g) \thicksim \mathcal{D}_{N, \eps} (g) \,.
	  \end{aligned}
	\end{equation}
	Consequently, by letting $k = N$ in the inequality \eqref{Mixed-Inq-Closed}, we have
	\begin{equation}\label{Closed-Energy-Inq}
	  \begin{aligned}
	    \tfrac{1}{2} \tfrac{\d}{\d t} \mathbb{E}_{N, \eta_1} (g, \phi) + \mathbb{D}_{N, \eps, \eta_1} (g) \leq C \mathbb{E}^\frac{1}{2}_{N, \eta_1} (g, \phi) \mathbb{D}_{N, \eps, \eta_1} (g)
	  \end{aligned}
	\end{equation}
	for some positive constant $C > 0$, independent of $\eps$, and for all $0 < \eps \leq 1$.
\end{remark}

\begin{proof}[Proof of Lemma \ref{Lmm-Mixed-Est}]

Employing the microscopic projection $\{I-P\}$ to \eqref{VPB-g-drop-eps}, one obtains
\begin{equation}\label{3.4.2}
  \begin{split}
    & \partial_t (I-P) g + \tfrac{1}{\eps} (I-P) (v \cdot \nabla_x (I-P) g ) + \tfrac{1}{\eps^2} L (I-P) g \\
    & + \gamma (I-P) \big[ \nabla_x \phi \cdot \nabla_v (I-P) g - v \cdot \nabla_x \phi (I-P) g \big] \\
    & = \tfrac{1}{\eps} Q(g, g) + \gamma (I-P) ( v \cdot \nabla_x \phi P g ) - \tfrac{1}{\eps} (I-P) ( v \cdot \nabla_x P g ) \,,
  \end{split}
\end{equation}
where we make use of the decomposition $g = P g + (I-P) g$ and the relations $ (I-P) (v \cdot \nabla_x \phi) = (I-P) (\nabla_x \cdot \nabla_v P g) = 0 $. For all multi-indexes $\alpha$, $\beta \in \mathbb{N}^3$ with $|\alpha| + |\beta| \leq N$ and $\beta \neq 0$, we apply the mixed derivatives operator $\partial_x^\alpha\partial_v^\beta$ to \eqref{3.4.2}, take $L^2_{x,v}$-inner product via multiplying by $\partial^\alpha_x \partial^\beta_v (I-P) g$ and integrate by parts over $(x,v) \in \R^3 \times \R^3$. Then, from Lemma \ref{Lmm-Coercivity-L} and the fact $\partial^{\beta_1}_v v = 0$ for $|\beta_1| \geq 2$, we deduce that there are two positive constants $\delta_1$ and $\delta_2$, independent of $\eps$, such that
  \begin{align}\label{W1+W2+W3+W4+W5+W6+W7}
    \no & \tfrac{1}{2} \tfrac{\d}{\d t} \| \partial_x^\alpha \partial_v^\beta (I-P) g \|^2_{L^2_{x,v}} + \tfrac{\delta_1}{\epsilon^2} \| \partial_x^\alpha \partial_v^\beta (I-P) g \|_{L^2_{x,v}(\nu)}^2 - \tfrac{\delta_2}{\eps^2} \sum_{\beta' < \beta} \| \partial_x^\alpha \partial_v^{\beta'} (I-P) g  \|_{L^2_{x,v}}^2 \\
    \no & \leq \underbrace{ \tfrac{1}{\epsilon} \sum_{ | \beta_1 | = 1} C_\beta^{\beta_1} \langle \partial_v^{\beta_1} v \cdot \nabla_x \partial_x^\alpha \partial_v^{\beta-\beta_1} (I-P) g , \partial_x^\alpha \partial_v^\beta (I-P) g \rangle_{L^2_{x,v}} }_{W_1} \\
    \no & \underbrace{ - \gamma \langle \partial_x^\alpha \partial_v^\beta (I-P) ( \nabla_x \phi^n \cdot \nabla_v (I-P) g ) , \partial_x^\alpha \partial_v^\beta (I-P) g \rangle_{L^2_{x,v}} }_{W_2} \\
    \no & + \underbrace{ \gamma \langle \partial_x^\alpha \partial_v^\beta (I-P) ( v \cdot \nabla_x \phi^n (I-P) g ) , \partial_x^\alpha \partial_v^\beta (I-P) g  \rangle_{L^2_{x,v}} }_{W_3} \\
    \no & + \underbrace{  \tfrac{1}{\epsilon} \langle \partial_x^\alpha \partial_v^\beta Q ( g , g ) , \partial_x^\alpha \partial_v^\beta (I-P) g  \rangle_{L^2_{x,v}} }_{W_4} \\
    \no & + \underbrace{ \gamma \langle \partial_x^\alpha \partial_v^\beta (I-P) ( v \cdot \nabla_x \phi P g ) , \partial_x^\alpha \partial_v^\beta (I-P) g \rangle_{L^2_{x,v}} }_{W_5} \\
    \no & \underbrace{ - \tfrac{1}{\eps} \langle \partial_x^\alpha \partial_v^\beta (I-P) ( v \cdot \nabla_x P g ), \partial_x^\alpha \partial_v^\beta (I-P) g \rangle_{L^2_{x,v}} }_{W_6} \\
    & + \underbrace{ \tfrac{1}{\eps} \langle \partial_x^\alpha \partial_v^\beta P ( v \cdot \nabla_x (I-P) g ), \partial_x^\alpha \partial_v^\beta (I-P) g \rangle_{L^2_{x,v}} }_{W_7} \,,
  \end{align}

By the similar arguments in \eqref{S1}, we have
\begin{equation}\label{W1}
  \begin{aligned}
    W_1 \lesssim \tfrac{1}{\eps} \sum_{\beta' < \beta} \| \nabla_x \partial^\alpha_x \partial^{\beta'}_v (I-P) g \|_{L^2_{x,v} (\nu)} \| \partial^\alpha_x \partial^\beta_v (I-P) g \|_{L^2_{x,v} (\nu) } \,.
  \end{aligned}
\end{equation}
The derivations of the bounds \eqref{S2}, \eqref{S3} and Lemma \ref{Lmm-CF-nu} tell us that
\begin{equation}\label{W2+W3}
  \begin{aligned}
    W_2 + W_3 \lesssim \| \nabla_x \phi \|_{H^N_x} \big( \| (I-P) g \|^2_{\widetilde{H}^N_{x,v} (\nu)} + \| (I-P) g \|^2_{H^N_x L^2_v (\nu)} \big) \lesssim \eps^2 \mathcal{E}_N^\frac{1}{2} (g, \phi) \mathcal{D}_{N,\eps} (g) \,.
  \end{aligned}
\end{equation}
We plug the splitting $g = P g + (I-P) g$ into the term $W_4$, namely,
\begin{equation}\no
  \begin{aligned}
    W_4 = & \tfrac{1}{\eps} \l \partial^\alpha_x \partial^\beta_v Q (P g , P g) , \partial^\alpha_x \partial^\beta_v (I-P) g \r_{L^2_{x,v}} + \tfrac{1}{\eps} \l \partial^\alpha_x \partial^\beta_v Q (P g , (I-P) g)  , \partial^\alpha_x \partial^\beta_v (I-P) g \r_{L^2_{x,v}} \\
    + & \tfrac{1}{\eps} \l \partial^\alpha_x \partial^\beta_v Q ((I-P) g , P g) , \partial^\alpha_x \partial^\beta_v (I-P) g \r_{L^2_{x,v}} \\
    + & \tfrac{1}{\eps} \l \partial^\alpha_x \partial^\beta_v Q ((I-P) g , (I-P) g) , \partial^\alpha_x \partial^\beta_v (I-P) g \r_{L^2_{x,v}}  \,,
  \end{aligned}
\end{equation}
where the last three terms can be bounded by Lemma \ref{Lmm-CF-nu} and \ref{Lmm-Q}
\begin{equation}\no
  \begin{aligned}
    \tfrac{1}{\eps} \| g \|_{H^N_{x,v}} \| (I-P) g \|^2_{H^N_{x,v}(\nu)} \lesssim \eps \mathcal{E}_N^\frac{1}{2} (g, \phi) \mathcal{D}_{N,\eps} (g) \,,
  \end{aligned}
\end{equation}
and the first term can be bounded by using the similar estimates of \eqref{I11}
\begin{equation}\no
  \begin{aligned}
    \tfrac{1}{\eps} \| g \|_{H^N_x L^2_v} \| \nabla_x P g \|_{H^{N-1}_x L^2_v} \| (I-P) g \|_{H^N_{x,v} (\nu)} \lesssim \mathcal{E}_N^\frac{1}{2} (g, \phi) \mathcal{D}_{N,\eps} (g) \,.
  \end{aligned}
\end{equation}
Consequently, we obtain
\begin{equation}\label{W4}
  \begin{aligned}
    W_4 \lesssim \mathcal{E}_N^\frac{1}{2} (g, \phi) \mathcal{D}_{N,\eps} (g)
  \end{aligned}
\end{equation}
for all $0 < \eps \leq 1$. Furthermore, by the similar arguments in the estimate \eqref{S5}, we have
\begin{equation}\label{W5}
  \begin{aligned}
    W_5 \lesssim & \| g \|_{H^N_x L^2_v} \| \nabla_x \phi \|_{H^N_x} \| (I-P) g \|_{\widetilde{H}^N_{x,v}} \\
    \lesssim & \| g \|_{H^N_x L^2_v} \| \l g , 1 \r_{L^2_v} \|_{H^{N-1}_x} \| (I-P) g \|_{H^N_{x,v} (\nu)} \\
    \lesssim & \eps \mathcal{E}_N^\frac{1}{\eps} (g, \phi) \mathcal{D}_{N,\eps} (g) \,,
  \end{aligned}
\end{equation}
where we also use Lemma \ref{Lmm-CF-nu} and the bound $\| \nabla_x \phi \|_{H^N_x} \lesssim \| \phi \|_{H^{N+1}_x} \lesssim \| \l g, 1 \r_{L^2_v} \|_{H^{N-1}_x}$ derived from the Poisson equation $\Delta_x \phi = \gamma \l g , 1 \r_{L^2_v}$. Substituting $P  g = a + b \cdot v + c |v|^2$ into the term $W_6$, one easily has
\begin{equation}\label{W6}
  \begin{aligned}
    W_6 = & - \tfrac{1}{\eps} \l \partial^\beta_v A (v) : \nabla_x \partial^\alpha_x b + \partial^\beta_v B (v) \cdot \nabla_x \partial^\alpha_x c , \partial^\alpha_x \partial^\beta_v (I-P) g \r_{L^2_{x,v}} \\
    \lesssim & \tfrac{1}{\eps} \| \nabla_x (b,c) \|_{H^{N-1}_x} \| \partial^\alpha_x \partial^\beta_v (I-P) g \|_{L^2_{x,v}} \\
    \lesssim & \tfrac{1}{\eps} \| \nabla_x P g \|_{H^{N-1}_x L^2_v} \| \partial^\alpha_x \partial^\beta_v (I-P) g \|_{L^2_{x,v} (\nu)} \,,
  \end{aligned}
\end{equation}
where the inequalities are derived from the H\"older inequality and Lemma \ref{Lmm-CF-nu}. It is easily derived from the same estimates of $S_7$ as in \eqref{S7} that the term $W_7$ can be bounded by
\begin{equation}\label{W7}
  \begin{aligned}
    W_7 \lesssim \tfrac{1}{\eps} \| (I-P) g \|_{H^N_x L^2_v (\nu)} \| \partial^\alpha_x \partial^\beta_v (I-P) g \|_{L^2_{x,v} (\nu)} \,.
  \end{aligned}
\end{equation}
From plugging the bounds \eqref{W1}, \eqref{W2+W3}, \eqref{W4}, \eqref{W5}, \eqref{W6} and \eqref{W7} into the inequality \eqref{W1+W2+W3+W4+W5+W6+W7} and the Young's inequality, we deduce that
\begin{equation}\label{Mixed-Inq-1}
  \begin{aligned}
    & \tfrac{1}{2} \tfrac{\d}{\d t} \| \partial_x^\alpha \partial_v^\beta (I-P) g \|^2_{L^2_{x,v}} + \tfrac{\delta_1}{2 \epsilon^2} \| \partial_x^\alpha \partial_v^\beta (I-P) g \|_{L^2_{x,v}(\nu)}^2 \\
    & \lesssim \tfrac{1}{\eps^2} \sum_{\beta' < \beta} \| \partial_x^\alpha \partial_v^{\beta'} (I-P) g  \|_{L^2_{x,v} (\nu)}^2 + \| \nabla_x P g \|^2_{H^{N-1}_x L^2_v} \\
    & + \tfrac{1}{\eps^2} \| (I-P) g \|^2_{H^N_x L^2_v (\nu)} + \mathcal{E}_N^\frac{1}{2} (g, \phi) \mathcal{D}_{N,\eps} (g)
  \end{aligned}
\end{equation}
for all $|\alpha| + |\beta| \leq N$ with $\beta \neq 0$ and for all $0 < \eps \leq 1$.

Let $\eta > 0$ be sufficiently small number to be determined. We add \eqref{Spatial-Bnd} in Lemma \ref{Lmm-Spatial} and $\eta$ times of \eqref{MM-Inq-Simple} in Remark \ref{Rmk-MM} to the $\eta^2$ times of the above inequality \eqref{Mixed-Inq-1}. Then there is a small $\eta_0 > 0$, independent of $\eps$, such that for all $0 < \eta \leq \eta_0$
\begin{equation}\label{Mixed-Inq-2}
  \begin{aligned}
    & \tfrac{1}{2} \tfrac{\d}{\d t} \Big( \| g \|^2_{H^N_x L^2_v} + \| \nabla_x \phi \|^2_{H^N_x} + c_0 \eta E_N^{int} (t) + \eta^2 \| \partial^\alpha_x \partial^\beta_v (I-P) g \|^2_{L^2_{x,v}} \Big)\\
    & + \tfrac{\delta}{2 \eps^2} \| (I-P) g \|^2_{H^N_x L^2_v (\nu)} + \tfrac{\eta}{2} \| \nabla_x P g \|^2_{H^{N-1}_x L^2_v} \\
    & + \tfrac{\eta}{2} \| \l g , 1 \r_{L^2_v} \|^2_{H^{N-1}_x} + \tfrac{\eta^2 \delta_1}{2 \eps^2} \| \partial^\alpha_x \partial^\beta_x (I-P) g \|^2_{L^2_{x,v} (\nu)} \\
    & \lesssim \tfrac{1}{\eps^2} \sum_{\beta' < \beta} \| \partial^\alpha_x \partial^{\beta'}_v (I-P) g \|^2_{L^2_{x,v} (\nu)} + \mathcal{E}_N^\frac{1}{2} (g,  \phi) D_{N, \eps} (g)
  \end{aligned}
\end{equation}
for all $|\alpha| + |\beta| \leq N$ with $\alpha \neq 0$ and for all $0 < \eps \leq 1$.

One notices that the quantity $ \tfrac{1}{\eps^2} \sum_{\beta' < \beta} \| \partial^\alpha_x \partial^{\beta'}_v (I-P) g \|^2_{L^2_{x,v} (\nu)} $ in the right-hand side of \eqref{Mixed-Inq-2} is still not controlled. However, we observe that the orders of $v$-derivatives in this quantity is strictly less than $|\beta|$, so that we can employ an induction over $|\beta| = k$, which ranges between $0$ and $N$, to obtain the inequality \eqref{Mixed-Inq-Closed}. For simplicity, we omit the details of the induction, and the proof of Lemma \ref{Lmm-Mixed-Est} is finished.
\end{proof}

\subsection{Global classical solutions: proof of Theorem \ref{Thm-global}}

From the differential inequality \eqref{Closed-Energy-Inq} in Remark \ref{Rmk-Mixed} and the energy bound \eqref{2.3} in Lemma \ref{Lmm-Local}, we deduce that for any $[t_1, t_2] \subseteq [0,T]$ and $0 < \eps \leq 1$
\begin{equation}\no
  \begin{aligned}
    & \big| \mathbb{E}_{N, \eta_1} (g_\eps, \phi_\eps) (t_2) - \mathbb{E}_{N, \eta_1} (g_\eps, \phi_\eps) (t_1) \big| \lesssim \int_{t_1}^{t_2} \mathbb{E}_{N,\eta_1}^\frac{1}{2} (g_\eps, \phi_\eps) \mathbb{D}_{N, \eps, \eta_1} (g_\eps) \d t \\
    & \lesssim \sup_{0 \leq t \leq T} \mathcal{E}_N^\frac{1}{2} (g_\eps, \phi_\eps) \int_{t_1}^{t_2} \mathcal{D}_{N,\eps} (g_\eps) \d t \lesssim \int_{t_1}^{t_2} \tfrac{1}{\eps^2} \| (I-P) g_\eps \|^2_{H^N_{x,v} (\nu)} \d t + |t_2 - t_1 | \\
    & \rightarrow 0 \quad \textrm{as} \quad t_2 \rightarrow t_1 \,.
  \end{aligned}
\end{equation}
Thus the local solution $g_\eps (t,x,v)$ to \eqref{VPB-g}-\eqref{IC-g} constructed in Lemma \ref{Lmm-Local} is such that the energy functional $ \mathbb{E}_{N, \eta_1} (g_\eps, \phi_\eps) $ is continuous in $t \in [0,T]$.

We now define
\begin{equation}\no
  \begin{aligned}
    T^* = \sup \Big\{ \tau \geq 0 ; \ C \sup_{t \in [0, \tau]} \mathbb{E}_{N, \eta_1}^\frac{1}{2} (g_\eps, \phi_\eps) (t) \leq \tfrac{1}{2} \Big\} \geq 0 \,.
  \end{aligned}
\end{equation}
From $ \mathbb{E}_{N, \eta_1} (g_{\eps, 0} , \phi_{\eps, 0}) \thicksim \mathcal{E}_N (g_{\eps, 0}, \phi_{\eps, 0}) $ and the initial condition in Theorem \ref{Thm-global}, we derive that
\begin{equation}\no
  \begin{aligned}
    \mathbb{E}_{N, \eta_1} (g_{\eps, 0} , \phi_{\eps, 0}) \leq C_0 \mathcal{E}_N (g_{\eps, 0}, \phi_{\eps, 0}) \leq C_0 \ell_0
  \end{aligned}
\end{equation}
for some constant $C_0 > 0$, where $\ell_0 \in (0,1]$ is small to be determined. If we take $0 < \ell_0 \leq \min \{ 1 , \delta , \tfrac{1}{16 C^2 C_0} \}$, where $\delta > 0$ is mentioned in Lemma \ref{Lmm-Local}, we have
\begin{equation}\label{Initial-Bnd}
  \begin{aligned}
    C \mathbb{E}_{N, \eta_1}^\frac{1}{2} (g_\eps, \phi_\eps) (0) = C \mathbb{E}_{N, \eta_1}^\frac{1}{2} (g_{\eps, 0}, \phi_{\eps, 0}) \leq C \sqrt{C_0 \ell_0} \leq \tfrac{1}{4} < \tfrac{1}{2} \,.
  \end{aligned}
\end{equation}
Then the continuity of $\mathbb{E}_{N, \eta_1} (g_\eps, \phi_\eps) (t)$ implies that $T^* > 0$. Consequently, we derive from the definition of $T^*$ and the inequality \eqref{Closed-Energy-Inq} in Remark \ref{Rmk-Mixed} that for all $t \in [0, T^*]$ and $0 < \eps \leq 1$
\begin{equation}\no
  \begin{aligned}
    \tfrac{\d}{\d t} \mathbb{E}_{N, \eta_1} (g_\eps , \phi_\eps) + \mathbb{D}_{N, \eps, \eta_1} (g_\eps) \leq 0 \,.
  \end{aligned}
\end{equation}
From integrating the above inequality on $[0,t]$ for any $t \in [0, T^*]$, we deduce that
\begin{equation}\label{Uniform-Global-Bnd-Inst}
  \begin{aligned}
    \mathbb{E}_{N,\eta_1} (g_\eps, \phi_\eps) (t) + \int_0^t \mathbb{D}_{N, \eps, \eta_1} (g_\eps) (\tau) \d \tau \leq \mathbb{E}_{N, \eta_1} (g_{\eps, 0} , \phi_{\eps, 0}) \leq C_0 \ell_0
  \end{aligned}
\end{equation}
uniformly for all $0 < \eps \leq 1$, which immediately implies by the initial bound \eqref{Initial-Bnd} that
\begin{equation}\no
  \begin{aligned}
    C \sup_{t \in [0, \tau]} \mathbb{E}_{N, \eta_1}^\frac{1}{2} (g_\eps, \phi_\eps) (t) \leq \tfrac{1}{4} < \tfrac{1}{2} \,.
  \end{aligned}
\end{equation}
Thus, the continuity of $\mathbb{E}_{N, \eta_1} (g_\eps, \phi_\eps) (t)$ and the definition of $T^*$ imply that $T^* = + \infty$. In other words, the local solutions $g_\eps (t,x,v)$ constructed in Lemma \ref{Lmm-Local} can be extended globally. Moreover, the uniform energy bound \eqref{Uniform-Global-Bnd} can be derived from \eqref{Equivalences-Energies} and \eqref{Uniform-Global-Bnd-Inst}. Then the proof of Theorem \ref{Thm-global} is finished.  \qquad\qquad\qquad\qquad\qquad\qquad\qquad\qquad\qquad\qquad\qquad\qquad\ \ \  $\square$

\section{Limit to Incompressible NSFP Equations}\label{Sec: Limits}

In this section, based on the uniform global energy bound \eqref{Uniform-Global-Bnd} in Theorem \ref{Thm-global}, we aim at deriving the incompressible NSFP equations \eqref{NSFP} from the perturbed VPB system \eqref{VPB-g}-\eqref{IC-g} as $\eps \rightarrow 0$.

\subsection{Limits from the global energy estimate}

From Theorem \ref{Thm-global}, we deduce that the Cauchy problem \eqref{VPB-g}-\eqref{IC-g} admits a global solution $g_\eps (t,x) \in L^\infty (\R^+; H^N_{x,v}) $, which subjects to the uniform global energy estimate \eqref{Uniform-Global-Bnd}, namely, there is a positive constant $C$, independent of $\eps$, such that
\begin{equation}\label{Unf-Bnd-1}
  \begin{aligned}
    \sup_{t \geq 0} \big( \| g_\eps (t) \|^2_{H^N_{x,v}} + \| \nabla_x \phi_\eps (t) \|^2_{H^N_x}  \big) \leq C \,,
  \end{aligned}
\end{equation}
and
\begin{equation}\label{Unf-Bnd-2}
  \begin{aligned}
    \int_0^\infty \| (I-P) g_\eps (t) \|^2_{H^N_{x,v} (\nu)} \d t \leq C \eps^2 \,.
  \end{aligned}
\end{equation}
From the uniform energy bound \eqref{Unf-Bnd-1}, there are $g (t,x,v) \in L^\infty (\R^+ ; H^N_{x,v})$ and $\Phi (t,x) \in L^\infty (\R^+ ; H^N_x)$ such that
\begin{equation}\label{Convgc-g-phi}
  \begin{aligned}
    & g_\eps \rightarrow g \quad \quad \ \ \textrm{weakly-}\star \ \textrm{for } t \geq 0, \ \quad \textrm{weakly in } H^N_{x,v} \,, \\
    & \nabla_x \phi_\eps \rightarrow \Phi \quad \textrm{weakly-}\star \ \textrm{for } t \geq 0, \ \quad \textrm{weakly in } H^N_x \textrm{ and strongly in } H^{N-\sigma}_x \textrm{ locally}
  \end{aligned}
\end{equation}
for any $\sigma > 0$ as $\eps \rightarrow 0$. Since $\nabla_x \phi_\eps$ is the form of a gradient, the function $\Phi$ should also be the same form, i.e., there is a function $\phi (t,x) \in L^\infty (\R^+; H^{N+1}_x)$ such that $\Phi = \nabla_x \phi$. The limits may hold for some subsequences. But, for convenience, we still employ the original notations of the sequences to denote by the subsequences throughout this paper. From the energy dissipation bound \eqref{Unf-Bnd-2} and the inequality $\| (I-P) g \|^2_{H^N_{x,v}} \lesssim \| (I-P) g \|^2_{H^N_{x,v} (\nu)} $ implied by Lemma \ref{Lmm-CF-nu}, we have
\begin{equation}\label{Convgc-g-perp}
  \begin{aligned}
    (I-P) g_\eps \rightarrow 0 \quad \textrm{strongly in } \ L^2(\R^+; H^N_{x,v})
  \end{aligned}
\end{equation}
as $\eps \rightarrow 0$. We thereby deduce from combining the first convergence in \eqref{Convgc-g-phi} and \eqref{Convgc-g-perp} that
\begin{equation}\no
  \begin{aligned}
    (I-P) g = 0 \,,
  \end{aligned}
\end{equation}
which immediately means that there are functions $\rho (t,x)$, $u (t,x)$, $\theta (t,x) \in L^\infty (\R^+; H^N_x)$ such that
\begin{equation}\label{Limit-g}
  \begin{aligned}
    g(t,x,v) = \rho (t,x) + u (t,x) \cdot v + \theta (t,x) ( \tfrac{|v|^2}{2} - \tfrac{3}{2} ) \,.
  \end{aligned}
\end{equation}

We now introduce the following fluid variables
\begin{equation}
  \begin{aligned}
    \rho_\eps = \l g_\eps , 1 \r_{L^2_v} \,, \ u_\eps = \l g_\eps , v \r_{L^2_v} , \theta_\eps = \l g_\eps , \tfrac{|v|^2}{3} - 1 \r_{L^2_v} \,.
  \end{aligned}
\end{equation}
Taking inner products with the perturbed VPB equations \eqref{VPB-g} in $L^2_v$ by $1$, $v$ and $ \tfrac{|v|^2}{3}-1 $, respectively, gives the local conservation laws:
\begin{equation}\label{Consevtn-Law-rho-u-theta-phi}
  \begin{aligned}
    & \partial_t \rho_\epsilon + \tfrac{1}{\epsilon} \nabla_x \cdot u_\epsilon = 0 \,, \\
    & \partial_t u_\epsilon + \tfrac{1}{\epsilon} \nabla_x ( \rho_\epsilon + \theta_\epsilon ) + \nabla_x \cdot
    \l \widehat{A}, \tfrac{1}{\epsilon} L (I-P) g_\epsilon \r_{L^2_v} - \tfrac{\gamma}{\epsilon} \nabla_x \phi_\epsilon - \gamma \rho_\epsilon \nabla_x \phi_\epsilon = 0 \,, \\
    & \partial_t \theta_\eps + \tfrac{2}{3 \eps} \nabla_x \cdot u_\epsilon + \tfrac{2}{3} \nabla_x \cdot \l \widehat{B}, \tfrac{1}{\epsilon} L (I-P) g_\epsilon \r_{L^2_v} -\tfrac{2}{3} \gamma u_\epsilon \cdot \nabla_x \phi_\epsilon = 0 \,, \\
    & \Delta_x \phi_\eps = \gamma \rho_\eps \,.
  \end{aligned}
\end{equation}
Moreover, the uniform bound \eqref{Unf-Bnd-1} tells us that
\begin{equation}\label{Unf-Bnd-3}
  \begin{aligned}
    \sup_{t \geq 0} \big( \| \rho_\eps \|_{H^N_x} + \| u_\eps \|_{H^N_x} + \| \theta_\eps \|_{H^N_x} \big) \leq C \,.
  \end{aligned}
\end{equation}
We thereby deduce the following convergences from the convergences \eqref{Convgc-g-phi} and the form of limit function $g (t,x,v)$ given in \eqref{Limit-g} that
\begin{equation}\label{Convgn-rho-u-theta}
  \begin{aligned}
    & \rho_\eps = \l g_\eps , 1 \r_{L^2_v} \rightarrow \l g , 1 \r_{L^2_v} = \rho \,, \\
    & u_\eps = \l g_\eps , v \r_{L^2_v} \rightarrow \l g , v \r_{L^2_v} = u \,, \\
    & \theta_\eps = \l g_\eps , \tfrac{|v|^2}{3} - 1 \r_{L^2_v} \rightarrow \l g , \tfrac{|v|^2}{3} - 1 \r_{L^2_v} = \theta \,,
  \end{aligned}
\end{equation}
weakly-$\star$ for $t \geq 0$, weakly in $H^N_x$ and strongly in $H^{N-\sigma}_x$ locally for any $\sigma > 0$ as $\eps \rightarrow 0$.

\subsection{Convergences to limit equations}

In this subsection, we will derive the incompressible NSFP equations \eqref{NSFP} from the local conservation laws \eqref{Consevtn-Law-rho-u-theta-phi} and the convergences obtained in the previous subsection.

\subsubsection{Incompressibility and Boussinesq relation}

From the first equation of \eqref{Consevtn-Law-rho-u-theta-phi} and the uniform bound \eqref{Unf-Bnd-3}, it is easy to deduce that
\begin{equation}\no
  \begin{aligned}
    \nabla_x \cdot u_\eps = - \eps \partial_t \rho_\eps \rightarrow 0
  \end{aligned}
\end{equation}
in the sense of distribution as $\eps \rightarrow 0$, which implies that by combining with the convergence \eqref{Convgn-rho-u-theta}
\begin{equation}\label{Incompressibility}
  \begin{aligned}
    \nabla_x \cdot u = 0 \,.
  \end{aligned}
\end{equation}

Via the second $u_\eps$-equation of \eqref{Consevtn-Law-rho-u-theta-phi},  we have
\begin{equation}\no
  \begin{aligned}
    \nabla_x ( \rho_\eps + \theta_\eps - \gamma \phi_\eps ) = - \eps \partial_t u_\eps + \gamma \eps \rho_\eps \nabla_x \phi_\eps - \nabla_x \cdot \l \widehat{A} , L (I-P) g_\eps \r_{L^2_v} \,.
  \end{aligned}
\end{equation}
The bound \eqref{Unf-Bnd-3} tells us that $- \eps \partial_t u_\eps \rightarrow 0$ in the sense of distribution as $\eps \rightarrow 0$. We derive from the bound \eqref{Unf-Bnd-1} and \eqref{Unf-Bnd-3} that
\begin{equation*}
  \begin{aligned}
    \| \gamma \eps \rho_\eps \nabla_x \phi_\eps \|_{H^N_x} \lesssim \eps \| \rho_\eps \|_{H^N_x} \| \nabla_x \phi_\eps \|_{H^N_x} \lesssim \eps \,,
  \end{aligned}
\end{equation*}
which means that
\begin{equation}\no
  \begin{aligned}
    \gamma \eps \rho_\eps \nabla_x \phi_\eps \rightarrow 0
  \end{aligned}
\end{equation}
strongly in $L^\infty (\R^+ ; H^N_x)$ as $\eps \rightarrow 0$. It is further derived from the uniform energy dissipation bound \eqref{Unf-Bnd-2} and Lemma \ref{Lmm-CF-nu} that
\begin{equation}\no
  \begin{aligned}
    \int_0^\infty \| \nabla_x \cdot \l \widehat{A} , L (I-P) g_\eps \r_{L^2_v} \|^2_{H^{N-1}_x} \d t \lesssim \int_0^\infty \| (I-P) g \|^2_{H^N_{x,v} (\nu)} \d t \lesssim \eps^2 \,.
  \end{aligned}
\end{equation}
We thereby obtain that
\begin{equation}\no
  \begin{aligned}
    \nabla_x \cdot \l \widehat{A} , L (I-P) g_\eps \r_{L^2_v} \rightarrow 0
  \end{aligned}
\end{equation}
strongly in $L^2 (\R^+; H^{N-1}_x)$ as $\eps \rightarrow 0$. Consequently, we have proved that
\begin{equation*}
  \begin{aligned}
    \nabla_x (\rho_\eps + \theta_\eps - \gamma \phi_\eps) \rightarrow 0
  \end{aligned}
\end{equation*}
in the sense of distribution as $\eps \rightarrow 0$, which, combining with the convergences \eqref{Unf-Bnd-1} and \eqref{Unf-Bnd-3}, gives the Boussinesq relation
\begin{equation}\label{Boussinesq-Reltn}
  \begin{aligned}
    \nabla_x ( \rho + \theta - \gamma \phi ) = 0 \,.
  \end{aligned}
\end{equation}

\subsubsection{Convergences of $\tfrac{3}{5} \theta_\eps - \tfrac{2}{5} \rho_\eps$ and $\mathcal{P} u_\eps$}

Before doing this, we introduce the following Aubin-Lions-Simon Theorem, a fundamental result of compactness in the study of nonlinear evolution problems, which can be referred to Theorem II.5.16 of \cite{Boyer-Fabrie-2013-BOOK} or \cite{Simon-1987-AMPA}, for instance.

\begin{lemma}[Aubin-Lions-Simon Theorem]\label{Lmm-Aubin-Lions-Simon}
	Let $B_0 \subset B_1 \subset B_2$ be three Banach spaces. We assume that the embedding of $B_1$ in $B_2$ is continuous and that the embedding of $B_0$ in $B_1$ is compact. Let $p$, $r$ be such that $1 \leq p, r \leq + \infty$. For $T > 0$, we define
	\begin{equation*}
	  \begin{aligned}
	    E_{p,r} = \big\{ u \in L^p (0,T; B_0), \ \partial_t u \in L^r (0,T; B_2) \big\} \,.
	  \end{aligned}
	\end{equation*}
	\begin{enumerate}
		\item If $p < + \infty$, the embedding of $E_{p,r}$ in $L^p (0,T; B_1)$ is compact.
		
		\item If $p = + \infty$ and $r > 1$, the embedding of $E_{p,r}$ in $C(0,T; B_1)$ is compact.
	\end{enumerate}
\end{lemma}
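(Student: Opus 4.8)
The plan is to deduce Lemma \ref{Lmm-Aubin-Lions-Simon} from two classical ingredients: an interpolation (Ehrling--Lions) inequality stemming from the compact embedding $B_0 \hookrightarrow B_1$, and a Kolmogorov--Riesz--Fr\'echet type characterization of relative compactness in the Bochner spaces $L^p(0,T;B_2)$ and $C(0,T;B_2)$, applied after a uniform-in-time-translation estimate obtained from the control of $\partial_t u$. First I would record the Ehrling--Lions lemma: since $B_0\hookrightarrow B_1$ is compact and $B_1\hookrightarrow B_2$ is continuous, for every $\eta>0$ there is $C_\eta>0$ with $\|w\|_{B_1}\le \eta\|w\|_{B_0}+C_\eta\|w\|_{B_2}$ for all $w\in B_0$; this is proved by contradiction, extracting from a normalized violating sequence a $B_1$-convergent (hence $B_2$-convergent) subsequence whose limit is simultaneously $0$ in $B_2$ and bounded below in $B_1$. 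Integrating in time yields $\|u\|_{L^p(0,T;B_1)}\le \eta\|u\|_{L^p(0,T;B_0)}+C_\eta\|u\|_{L^p(0,T;B_2)}$ for $u\in E_{p,r}$. Consequently, for a bounded sequence $(u_n)\subset E_{p,r}$ it suffices to prove relative compactness in the weaker space $L^p(0,T;B_2)$ (case $p<\infty$), respectively $C(0,T;B_2)$ (case $p=\infty$, $r>1$): the Ehrling bound then upgrades this to compactness in $B_1$ after a diagonal argument letting $\eta\to0$.

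The second step is the time-translation estimate. Each $u\in E_{p,r}$ coincides a.e. with a $B_2$-valued absolutely continuous function, so for $0<h<T$ one has $u(t+h)-u(t)=\int_t^{t+h}\partial_t u(s)\,\d s$, and H\"older in $s$ followed by Fubini gives
\[
\|u(\cdot+h)-u\|_{L^r(0,T-h;B_2)}\le h\,\|\partial_t u\|_{L^r(0,T;B_2)},\qquad
\|u(\cdot+h)-u\|_{C(0,T-h;B_2)}\le h^{1-\frac1r}\|\partial_t u\|_{L^r(0,T;B_2)}\ (r>1).
\]
Combined with the uniform bound of $(u_n)$ in $L^p(0,T;B_0)$ (hence in $L^{\min(p,r)}(0,T;B_2)$ on the finite interval, using $B_0\hookrightarrow B_2$), these estimates furnish uniform smallness of the interior translates. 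For the case $p<\infty$ I would then invoke the Riesz--Fr\'echet--Kolmogorov criterion in $L^p((0,T);B_2)$: a bounded family whose translates are uniformly $L^p$-small and whose contribution near the endpoints is uniformly small is relatively compact; the endpoint control is produced by combining the translation bound with the $L^p(0,T;B_0)\hookrightarrow L^p(0,T;B_2)$ bound through a mollification/cut-off in $t$, or equivalently through a time-extension of $u_n$ off $(0,T)$ compatible with the $\partial_t$-bound. For the case $p=\infty$, $r>1$ I would instead apply the vector-valued Arzel\`a--Ascoli theorem directly in $C(0,T;B_1)$: the Hölder estimate above gives equicontinuity of $t\mapsto u_n(t)$ in $B_2$, the $B_0$-bound together with the compact embedding gives relative compactness of $\{u_n(t):n\}$ in $B_1$ for each $t$, and Ehrling turns $B_2$-equicontinuity into $B_1$-equicontinuity, so $(u_n)$ is precompact in $C(0,T;B_1)$.

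I expect the main obstacle to be the compactness criterion in the Bochner space $L^p(0,T;B_2)$ on the \emph{bounded} interval --- that is, upgrading ``uniform smallness of interior translations'' to genuine relative compactness, which is precisely the technical core of \cite{Simon-1987-AMPA} and requires either an extension of the $u_n$ outside $(0,T)$ respecting the $\partial_t$-bound or a delicate mollification controlling the behaviour at $t=0$ and $t=T$. A secondary subtlety arises when $p>r$: one must then argue slightly more carefully (using that the interval is finite and that the family is additionally bounded in $L^p(0,T;B_0)$) to recover the uniform translation smallness in the $L^p$-in-time norm rather than only in the $L^r$-in-time norm.
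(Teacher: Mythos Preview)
The paper does not prove this lemma at all: it merely states the Aubin--Lions--Simon theorem and refers the reader to Theorem II.5.16 of \cite{Boyer-Fabrie-2013-BOOK} and to \cite{Simon-1987-AMPA} for the proof. So there is no ``paper's own proof'' to compare against; the authors treat it as a black-box classical result.

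Your sketch is the standard strategy for proving this theorem and is essentially the one found in the cited references: Ehrling's lemma to reduce $B_1$-compactness to $B_2$-compactness, a time-translation estimate from the $\partial_t u$ bound, and then a Kolmogorov--Riesz--Fr\'echet (or Arzel\`a--Ascoli) argument. You have correctly identified the two genuine technical points --- handling the endpoints of $(0,T)$ in the $L^p$ case, and the $p>r$ mismatch --- and these are precisely what Simon \cite{Simon-1987-AMPA} works out in detail. For the purposes of this paper nothing more is required than the citation.
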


We now consider the convergence of $\tfrac{3}{5} \theta_\eps - \tfrac{2}{5} \rho_\eps$. The third $\theta_\eps$-equation of \eqref{Consevtn-Law-rho-u-theta-phi} multiplied by $ \tfrac{3}{5} $ minus $ \tfrac{2}{5} $ times of the first $\rho_\eps$-equation gives
\begin{equation}\label{Equ-theta-rho}
  \begin{aligned}
    \partial_t ( \tfrac{3}{5} \theta_\eps - \tfrac{2}{5} \rho_\eps ) + \tfrac{2}{5} \nabla_x \cdot \l \widehat{B} , \tfrac{1}{\eps} L (I-P) g_\eps \r_{L^2_v} - \tfrac{2}{5} \gamma u_\eps \cdot \nabla_x \phi_\eps = 0 \,,
  \end{aligned}
\end{equation}
which yields that
\begin{equation*}
  \begin{aligned}
    \| \partial_t ( \tfrac{3}{5} \theta_\eps - \tfrac{2}{5} \rho_\eps ) \|_{H^{N-1}_x} \lesssim \tfrac{1}{\eps} \| B \|_{L^2_v} \| \nabla_x (I-P) g_\eps \|_{H^{N-1}_x L^2_v } \lesssim \tfrac{1}{\eps} \| (I-P) g_\eps \|_{H^N_x L^2_v (\nu)} \,.
  \end{aligned}
\end{equation*}
One then derives from the uniform energy bounds \eqref{Unf-Bnd-2} that
\begin{equation}\label{Bnd-rho-theta-t}
  \begin{aligned}
    \| \partial_t ( \tfrac{3}{5} \theta_\eps - \tfrac{2}{5} \rho_\eps ) \|_{L^2 (\R^+; H^{N-1}_x)} \lesssim \tfrac{1}{\eps} \| (I-P) g_\eps \|_{L^2(\R^+ ; H^N_{x,v} (\nu))} \lesssim 1
  \end{aligned}
\end{equation}
for all $0 < \eps \leq 1$. It is easily derived from the uniform bound \eqref{Unf-Bnd-3} that
\begin{equation}\label{Bnd-rho-theta}
  \begin{aligned}
    \| \tfrac{3}{5} \theta_\eps - \tfrac{2}{5} \rho_\eps \|_{L^\infty (\R^+; H^N_x)} \lesssim 1
  \end{aligned}
\end{equation}
for all $0 < \eps \leq 1$. One notices that
\begin{equation}\label{Embeddings}
  \begin{aligned}
    H^N_x \hookrightarrow H^{N-1}_x \hookrightarrow H^{N-1}_x \,,
  \end{aligned}
\end{equation}
where the embedding of $H^N_x$ in $H^{N-1}_x$ is compact and the embedding of $H^{N-1}_x $ in $H^{N-1}_x$ is naturally continuous. Then, from Aubin-Lions-Simon Theorem in Lemma \ref{Lmm-Aubin-Lions-Simon}, the bounds \eqref{Bnd-rho-theta-t}, \eqref{Bnd-rho-theta} and the embeddings \eqref{Embeddings}, we deduce that there is a $\widetilde{\theta} (t,x) \in L^\infty (\R^+; H^N_x) \cap C (\R^+ ; H^{N-1}_x)$ such that
\begin{equation}\no
  \begin{aligned}
    \tfrac{3}{5} \theta_\eps - \tfrac{2}{5} \rho_\eps \rightarrow \widetilde{\theta}
  \end{aligned}
\end{equation}
strongly in $C(\R^+; H^{N-1}_x)$ as $\eps \rightarrow 0$. Combining with the convergences \eqref{Convgn-rho-u-theta}, we know that $ \widetilde{\theta} = \tfrac{3}{5} \theta - \tfrac{2}{5} \rho $. Consequently, we have
\begin{equation}\label{Convgnc-rho-theta}
  \begin{aligned}
    \tfrac{3}{5} \theta_\eps - \tfrac{2}{5} \rho_\eps \rightarrow \tfrac{3}{5} \theta - \tfrac{2}{5} \rho
  \end{aligned}
\end{equation}
strongly in $C(\R^+; H^{N-1}_x)$ as $\eps \rightarrow 0$, where $ \tfrac{3}{5} \theta - \tfrac{2}{5} \rho \in L^\infty (\R^+; H^N_x) \cap C (\R^+ ; H^{N-1}_x)  $.

Next we consider the convergence of $\mathcal{P} u_\eps$, where $\mathcal{P}$ is the Leray projection on $\R^3$. Taking $\mathcal{P}$ on the second $u_\eps$-equation of \eqref{Consevtn-Law-rho-u-theta-phi} gives
\begin{equation}\label{Equ-Pu}
  \begin{aligned}
    \partial_t \mathcal{P} u_\eps + \mathcal{P} \nabla_x \cdot \l \widehat{A} , \tfrac{1}{\eps} L (I-P) g_\eps \r_{L^2_v} - \mathcal{P} ( \gamma \rho_\eps \nabla_x \phi_\eps ) = 0 \,.
  \end{aligned}
\end{equation}
It is easy to deduce from the H\"older inequality, the bound $\| A \|_{L^2_v} \lesssim 1$, the calculus inequality and Lemma \ref{Lmm-CF-nu} that
\begin{equation*}
  \begin{aligned}
    \| \partial_t \mathcal{P} u_\eps \|_{H^{N-1}_x} \lesssim & \tfrac{1}{\eps} \| A \|_{L^2_v} \| \nabla_x (I-P) g_\eps \|_{H^{N-1}_x L^2_v} + \| \rho_\eps \nabla_x \phi_\eps \|_{H^{N-1}_x} \\
    \lesssim & \tfrac{1}{\eps} \| (I-P) g \|_{H^N_x L^2_v (\nu)} + \| \rho_\eps \|_{H^N_x} \| \nabla_x \phi_\eps \|_{H^N_x} \,,
  \end{aligned}
\end{equation*}
which, by the uniform bounds \eqref{Unf-Bnd-1}, \eqref{Unf-Bnd-2} and \eqref{Unf-Bnd-3}, implies that
\begin{equation}\label{Bnd-Pu-t}
  \begin{aligned}
    \| \partial_t \mathcal{P} u_\eps \|_{L^2 (0, T ; H^{N-1}_x)} \lesssim & \tfrac{1}{\eps} \| (I-P) g_\eps \|_{H^N_{x,v} (\nu)} + \| \rho_\eps \|_{L^\infty (\R^+; H^N_x)} \| \nabla_x \phi_\eps \|_{L^\infty (\R^+ ; H^N_x)} T
  \end{aligned}
\end{equation}
for any $T > 0$ and $0 < \eps \leq 1$. Furthermore, the bound \eqref{Unf-Bnd-3} tells us that for all $T > 0$ and $0 < \eps \leq 1$
\begin{equation}\label{Bnd-Pu}
  \begin{aligned}
    \| \mathcal{P} u_\eps \|_{L^\infty (0,T; H^N_x)} \lesssim 1 \,.
  \end{aligned}
\end{equation}
Then, from Aubin-Lions-Simon Theorem in Lemma \ref{Lmm-Aubin-Lions-Simon}, the bounds \eqref{Bnd-Pu-t}, \eqref{Bnd-Pu} and the embeddings \eqref{Embeddings}, we derive that there is a $\widetilde{u} \in L^\infty (\R^+ ; H^N_x) \cap C (\R^+ ; H^{N-1}_x)$ such that
\begin{equation*}
  \begin{aligned}
    \mathcal{P} u_\eps \rightarrow \widetilde{u}
  \end{aligned}
\end{equation*}
strongly in $ C (0, T; H^{N-1}_x) $ for all $T > 0$ as $\eps \rightarrow 0$. Furthermore, from the convergences \eqref{Convgn-rho-u-theta} and incompressibility \eqref{Incompressibility}, we deduce that $ \widetilde{u} = \mathcal{P} u = u $. Consequently,
\begin{equation}\label{Convgnc-Pu}
  \begin{aligned}
    \mathcal{P} u_\eps \rightarrow u
  \end{aligned}
\end{equation}
strongly in $ C (\R^+; H^{N-1}_x) $ as $\eps \rightarrow 0$, where $u \in L^\infty (\R^+; H^N_x) \cap C (\R^+; H^{N-1}_x)$. We thereby have
\begin{equation}\label{Convgnc-Pu-perp}
  \begin{aligned}
    \mathcal{P}^\perp u_\eps \rightarrow 0
  \end{aligned}
\end{equation}
weakly-$\star$ in $t \geq 0$, weakly in $H^N_x$ and strongly in $H^{N-\sigma}_x$ locally for any $\sigma > 0$ as $\eps \rightarrow 0$, where $\mathcal{P}^\perp$ is the orthogonal projection of $\mathcal{P}$ in $L^2_x$ with the form $\mathcal{P}^\perp = \nabla_x \Delta_x^{-1} \nabla_x \cdot$.

\subsubsection{Equations of $\rho$, $u$ and $\theta$}

Based on the all convergences obtained in the previous subsection, we will deduce the incompressible NSFP system from \eqref{Equ-Pu}, \eqref{Equ-theta-rho} and the last Poisson equation of \eqref{Consevtn-Law-rho-u-theta-phi}. We first calculate the term
\begin{equation*}
  \begin{aligned}
    \l \widehat{\Xi} , \tfrac{1}{\eps} L (I-P) g_\eps \r_{L^2_v} \,,
  \end{aligned}
\end{equation*}
where $\Xi = A$ or $B$. Following the standard formal derivation of fluid dynamic limits of Boltzmann equation (see \cite{BGL1}, for instance), we obtain
\begin{equation}\label{A-Dissipation}
  \begin{aligned}
    \l \widehat{A} , \tfrac{1}{\eps} L (I-P) g_\eps \r_{L^2_v} = u_\eps \otimes u_\eps - \tfrac{|u_\eps|^2}{3} I - \mu \Sigma (u_\eps) - R_{\eps, A} \,,
  \end{aligned}
\end{equation}
and
\begin{equation}\label{B-Dissipation}
  \begin{aligned}
    \l \widehat{B} , \tfrac{1}{\eps} L (I-P) g_\eps \r_{L^2_v} = \tfrac{5}{2} u_\eps \theta_\eps - \tfrac{5}{2} \kappa \nabla_x \theta_\eps - R_{\eps, B} \,,
  \end{aligned}
\end{equation}
where $\Sigma (u_\eps) = \nabla_x u_\eps + \nabla_x u_\eps^\top - \tfrac{2}{3} \nabla_x \cdot u_\eps I$, $\mu$, $\kappa$ are given in \eqref{mu-kappa}, and for $\Xi = A$ or $B$, $R_{\eps, \Xi}$ are of the form
\begin{equation}\label{R-eps-Xi}
  \begin{aligned}
    R_{\eps, \Xi} = & \l \Xi , -\eps \partial_t g_\eps - v \cdot \nabla_x (I-P) g_\eps + Q ( (I-P) g_\eps , P g_\eps ) + Q ( (I-P) g_\eps , (I-P) g_\eps ) \\
    & \qquad + Q ( P g_\eps , (I-P) g_\eps ) - \gamma \eps \nabla_x \phi_\eps \cdot \nabla_v g_\eps + \gamma \eps v \cdot \nabla_x \phi_\eps g_\eps \r_{L^2_v} \,.
  \end{aligned}
\end{equation}

From plugging the relation \eqref{A-Dissipation} and decomposition $u_\eps = \mathcal{P} u_\eps + \mathcal{P}^\perp u_\eps$ into \eqref{Equ-Pu}, we deduce that
\begin{equation}\label{Evolution-Pu}
  \begin{aligned}
    \partial_t \mathcal{P} u_\eps + \mathcal{P} \nabla_x \cdot ( \mathcal{P} u_\eps \otimes \mathcal{P} u_\eps ) - \mu \Delta_x \mathcal{P} u_\eps = \tfrac{5}{3} \mathcal{P} [ \, \rho_\eps \nabla_x ( \tfrac{3}{5} \theta_\eps - \tfrac{2}{5} \rho_\eps ) ] + R_{\eps, u} \,,
  \end{aligned}
\end{equation}
where
\begin{equation}\label{R-eps-u}
  \begin{aligned}
    R_{\eps, u} = & \mathcal{P} \nabla_x \cdot R_{\eps, A}  + \mathcal{P} [ \rho_\eps \nabla_x (\gamma \phi_\eps - \rho_\eps - \theta_\eps) ] \\
    - & \mathcal{P} \nabla_x \cdot ( \mathcal{P} u_\eps \otimes \mathcal{P}^\perp u_\eps + \mathcal{P}^\perp \otimes \mathcal{P} u_\eps + \mathcal{P}^\perp u_\eps \otimes \mathcal{P}^\perp u_\eps ) \,.
  \end{aligned}
\end{equation}
Here we make use of $ \mathcal{P} (\rho_\eps \nabla_x \rho_\eps) = \mathcal{P} \nabla_x \cdot ( \tfrac{|u_\eps|^2}{3} I + \nabla_x u_\eps^\top - \tfrac{2}{3} \nabla_x \cdot u_\eps I ) = 0 $. If we substitute the relation \eqref{B-Dissipation} and splitting $u_\eps = \mathcal{P} u_\eps + \mathcal{P}^\perp u_\eps$ into the evolution \eqref{Equ-theta-rho}, we yield that
\begin{equation}\label{Evolution-theta-rho}
  \begin{aligned}
    \partial_t ( \tfrac{3}{5} \theta_\eps - \tfrac{2}{5} \rho_\eps ) + \mathcal{P} u_\eps \cdot \nabla_x ( \tfrac{3}{5} \theta_\eps - \tfrac{2}{5} \rho_\eps ) - \kappa \Delta_x \theta_\eps = R_{\eps, \theta} \,,
  \end{aligned}
\end{equation}
where
\begin{equation}\label{R-eps-theta}
  \begin{aligned}
    R_{\eps, \theta} = \tfrac{2}{5} \nabla_x \cdot R_{\eps, B} + \tfrac{2}{5} \gamma \mathcal{P}^\perp u_\eps \cdot \nabla_x \phi_\eps - \nabla_x \cdot ( \mathcal{P}^\perp u_\eps \theta_\eps ) - \tfrac{2}{5} \mathcal{P} u_\eps \cdot \nabla_x ( \rho_\eps + \theta_\eps - \gamma \phi_\eps ) \,.
  \end{aligned}
\end{equation}

Now we take the limit from \eqref{Evolution-Pu} to obtain the $u$-equation of \eqref{NSFP}. For any $T > 0$, let a vector-valued text function $\psi (t,x) \in C^1 (0,T; C_c^\infty (\R^3)) $ with $\nabla_x \cdot \psi = 0$, $\psi (0,x) = \psi_0 (x) \in C_c^\infty (\R^3)$ and $\psi (t,x) = 0$ for $t \geq T'$, where $T' < T$. Then we obtain
\begin{equation}\no
  \begin{aligned}
    & \int_0^T \int_{\R^3} \partial_t \mathcal{P} u_\eps \cdot \psi (t,x) \d x \d t \\
    = & - \int_{\R^3} \mathcal{P} u_\eps (0,x) \cdot \psi (0,x) \d x - \int_0^T \int_{\R^3} \mathcal{P} u_\eps \cdot \partial_t \psi (t,x) \d x \d t \\
    = & - \int_{\R^3} \mathcal{P}\l g_{\eps, 0} , v \r_{L^2_v} \cdot \psi_0 (x) \d x - \int_0^T \int_{\R^3} \mathcal{P} u_\eps \cdot \partial_t \psi (t,x) \d x \d t \,.
  \end{aligned}
\end{equation}
From the initial conditions in Theorem \ref{Thm-Limit} and the convergence \eqref{Convgnc-Pu}, we deduce that
\begin{equation}\no
  \begin{aligned}
    \int_{\R^3} \mathcal{P}\l g_{\eps, 0} , v \r_{L^2_v} \cdot \psi_0 (x) \d x \rightarrow \int_{\R^3} \mathcal{P} \l g_0 , v \r_{L^2_v} \cdot \psi_0 (x) \d x = \int_{\R^3} \mathcal{P} u_0 (x) \cdot \psi_0 (x) \d x
  \end{aligned}
\end{equation}
and
\begin{equation}\no
  \begin{aligned}
    \int_0^T \int_{\R^3} \mathcal{P} u_\eps \cdot \partial_t \psi (t,x) \d x \d t \rightarrow \int_0^T \int_{\R^3} u \cdot \partial_t \psi (t,x) \d x \d t
  \end{aligned}
\end{equation}
as $\eps \rightarrow 0$. Namely, we have
\begin{equation}\label{Converge-Pu-1}
  \begin{aligned}
    \int_0^T \int_{\R^3} \partial_t \mathcal{P} u_\eps \cdot \psi (t,x) \d x \d t \rightarrow - \int_{\R^3} \mathcal{P} u_0 (x) \cdot \psi_0 (x) \d x - \int_0^T \int_{\R^3} u \cdot \partial_t \psi (t,x) \d x \d t
  \end{aligned}
\end{equation}
as $\eps \rightarrow 0$. It is also easily implied by the strong convergences \eqref{Convgnc-Pu} that
\begin{equation}\label{Converge-Pu-2}
  \begin{aligned}
    & \mathcal{P} \nabla_x \cdot ( \mathcal{P} u_\eps \otimes \mathcal{P} u_\eps ) \rightarrow \mathcal{P} \nabla_x \cdot (u \otimes u) \quad \textrm{strongly in } \quad C(\R^+; H^{N-2}_x) \,, \\
    & \mu \Delta_x \mathcal{P} u_\eps \rightarrow \mu \Delta_x u \quad \textrm{strongly in } \quad C(\R^+; H^{N-3}_x)
  \end{aligned}
\end{equation}
as $\eps \rightarrow 0$. From the bound \eqref{Unf-Bnd-3} and strong convergence \eqref{Convgnc-rho-theta}, we know that

\begin{equation}\label{Converge-Pu-3}
  \begin{aligned}
    \tfrac{5}{3} \mathcal{P} \big[ \rho_\eps \nabla_x ( \tfrac{3}{5} \theta_\eps - \tfrac{2}{5} \rho_\eps ) \big] \rightarrow \tfrac{5}{3} \mathcal{P} \big[ \rho \nabla_x ( \tfrac{3}{5} \theta - \tfrac{2}{5} \rho ) \big] = \mathcal{P} (\rho \nabla_x \theta)
  \end{aligned}
\end{equation}
weakly-$\star$ for $t \geq 0$, weakly in $H^N_x$ and strongly in $H^{N-1}_x$ locally as $\eps \rightarrow 0$.

It remains to prove
\begin{equation}\label{Converge-Pu-4}
   \begin{aligned}
     R_{\eps, u} \rightarrow 0
   \end{aligned}
\end{equation}
in the sense of distribution as $\eps \rightarrow 0$, where $R_{\eps, u}$ is defined in \eqref{R-eps-u}. Indeed, by employing the convergences \eqref{Convgnc-Pu} and \eqref{Convgnc-Pu-perp}, one can obtain
\begin{equation}
  \begin{aligned}
    \mathcal{P} \nabla_x \cdot ( \mathcal{P} u_\eps \otimes \mathcal{P}^\perp u_\eps + \mathcal{P}^\perp u_\eps \otimes \mathcal{P} u_\eps + \mathcal{P}^\perp u_\eps \otimes \mathcal{P}^\perp u_\eps ) \rightarrow 0
  \end{aligned}
\end{equation}
weakly-$\star$ in $t \geq 0$ and strongly in $H^{N-2}_x$ locally as $\eps \rightarrow 0$. Moreover, the convergences \eqref{Convgc-g-phi}, \eqref{Convgn-rho-u-theta} and the Boussinesq relation \eqref{Boussinesq-Reltn} tell us that for almost all $t \geq 0$,
\begin{equation}
  \begin{aligned}
    \mathcal{P} \big[ \rho_\eps \nabla_x ( \gamma \phi_\eps - \rho_\eps - \theta_\eps ) \big] (t) \rightarrow \mathcal{P} \big[ \rho \nabla_x ( \gamma \phi - \rho - \theta ) \big] (t) = 0
  \end{aligned}
\end{equation}
strongly in $H^{N-1}_x$ locally as $\eps \rightarrow 0$. Finally, from the definition of $R_{\eps, \Xi}$ for $\Xi = A$ or $B$ in \eqref{R-eps-Xi} and the convergences \eqref{Unf-Bnd-1}, \eqref{Unf-Bnd-2}, \eqref{Unf-Bnd-3}, \eqref{Convgnc-Pu}, \eqref{Convgnc-Pu-perp}, we can easily deduce that
\begin{equation}\label{Div-R-eps-Xi-to-0}
  \begin{aligned}
    \nabla_x \cdot R_{\eps, \Xi} \rightarrow 0
  \end{aligned}
\end{equation}
in the sense of distribution as $\eps \rightarrow 0$. Consequently, we justify the convergence \eqref{Converge-Pu-4}. It thereby follows from the convergences \eqref{Converge-Pu-1}, \eqref{Converge-Pu-2}, \eqref{Converge-Pu-3}, \eqref{Converge-Pu-4} and the incompressibility \eqref{Incompressibility} that $u \in L^\infty (\R^+; H^N_x) \cap C (\R^+; H^{N-1}_x)$ subjects to the evolution equation
\begin{equation}
  \begin{aligned}
    \partial_t u + \mathcal{P} \nabla_x \cdot (u \otimes u) - \mu \Delta_x u = \mathcal{P} ( \rho \nabla_x \theta ) \,, \\
    \nabla_x \cdot u = 0 \,,
  \end{aligned}
\end{equation}
with initial data
\begin{equation}
  \begin{aligned}
    u (0, x) = \mathcal{P} u_0 (x) \,.
  \end{aligned}
\end{equation}

We next take the limit from \eqref{Evolution-theta-rho} to the third $(\tfrac{3}{5} \theta - \tfrac{2}{5} \rho)$-equation in \eqref{NSFP} as $\eps \rightarrow 0$. For any $T > 0$, let $\xi (t,x)$ be a test function satisfying $\xi (t,x) \in C^1 (0,T; C_c^\infty (\R^3))$ with $\xi (0,x) = \xi_0 (x) \in C_c^\infty (\R^3)$ and $\xi (t,x) = 0$ for $t \geq T'$, where $T' < T$. From the initial conditions in Theorem \ref{Thm-Limit} and the strong convergence \eqref{Convgnc-rho-theta}, we deduce that
\begin{equation}\label{Converge-theta-1}
  \begin{aligned}
    & \int_0^T \int_{\R^3} \partial_t ( \tfrac{3}{5} \theta_\eps - \tfrac{2}{5} \rho_\eps ) (t,x) \xi (t,x) \d x \d t \\
    = & - \int_{\R^3} \l g_{\eps , 0} (x,v) , \tfrac{|v|^2}{5} - 1 \r_{L^2_v} \xi_0 (x) \d x - \int_0^T \int_{\R^3} ( \tfrac{3}{5} \theta_\eps - \tfrac{2}{5} \rho_\eps ) (t,x) \partial_t \xi (t,x) \d x \d t \\
    \rightarrow & - \int_{\R^3} \l g_0 (x,v) , \tfrac{|v|^2}{5} - 1 \r_{L^2_v} \xi_0 (x) \d x - \int_0^T \int_{\R^3} ( \tfrac{3}{5} \theta - \tfrac{2}{5} \rho ) (t,x) \partial_t \xi (t,x) \d x \d t \\
    = & - \int_{\R^3} ( \tfrac{3}{5} \theta_0 - \tfrac{2}{5} \rho_0 ) (x) \xi_0 (x) \d x - \int_0^T \int_{\R^3} ( \tfrac{3}{5} \theta - \tfrac{2}{5} \rho ) (t,x) \partial_t \xi (t,x) \d x \d t
  \end{aligned}
\end{equation}
as $\eps \rightarrow 0$. It follows from the strong convergences \eqref{Convgnc-Pu} and \eqref{Convgnc-rho-theta} that
\begin{equation}\label{Converge-theta-2}
  \begin{aligned}
    & \nabla_x \cdot [ \mathcal{P} u_\eps ( \tfrac{3}{5} \theta_\eps - \tfrac{2}{5} \rho_\eps ) ] \rightarrow \nabla_x \cdot [ u ( \tfrac{3}{5} \theta - \tfrac{2}{5} \rho ) ] \quad \textrm{strongly in } C(\R^+; H^{N-2}_x) \,, \\
    & \kappa \Delta_x ( \tfrac{3}{5} \theta_\eps - \tfrac{2}{5} \rho_\eps ) \rightarrow \kappa \Delta_x ( \tfrac{3}{5} \theta - \tfrac{2}{5} \rho ) \quad \textrm{strongly in } C(\R^+; H^{N-3}_x)
  \end{aligned}
\end{equation}
as $\eps \rightarrow 0$. It remains to prove
\begin{equation}\label{Converge-theta-3}
  \begin{aligned}
    R_{\eps, \theta} \rightarrow 0
  \end{aligned}
\end{equation}
in the sense of distribution as $\eps \rightarrow 0$, where $R_{\eps, \theta}$ is defined in \eqref{R-eps-theta}. Indeed, form the convergences \eqref{Convgc-g-phi}, \eqref{Convgn-rho-u-theta} and \eqref{Convgnc-Pu-perp}, one easily derives that for almost all $t \geq 0$
\begin{equation}\label{Limit-theta-1}
  \begin{aligned}
    & \tfrac{2}{5} \gamma \mathcal{P}^\perp u_\eps (t) \cdot \nabla_x \phi_\eps (t) \rightarrow 0 \quad \textrm{strongly in } H^{N-1}_x \,, \\
    & \nabla_x \cdot [ \mathcal{P}^\perp u_\eps (t) \theta_\eps (t) ] \rightarrow 0 \quad \textrm{strongly in } H^{N-2}_x
  \end{aligned}
\end{equation}
as $\eps \rightarrow 0$. Moreover, the convergence \eqref{Div-R-eps-Xi-to-0} tells us
\begin{equation}\label{Limit-theta-2}
  \begin{aligned}
    \tfrac{2}{5} \nabla_x \cdot R_{\eps, B} \rightarrow 0
  \end{aligned}
\end{equation}
in the sense of distribution as $\eps \rightarrow 0$. Furthermore, it follows from the convergences \eqref{Convgc-g-phi}, \eqref{Convgn-rho-u-theta}, \eqref{Convgnc-Pu} and the Boussinesq relation \eqref{Boussinesq-Reltn} that
\begin{equation}\label{Limit-theta-3}
  \begin{aligned}
    \tfrac{2}{5} \mathcal{P} u_\eps \cdot \nabla_x ( \rho_\eps + \theta_\eps - \gamma \phi_\eps ) \rightarrow \tfrac{2}{5} u \cdot \nabla_x ( \rho + \theta - \gamma \phi ) = 0
  \end{aligned}
\end{equation}
weakly-$\star$ for $t \geq 0$, weakly in $H^{N-1}_x$ and strongly in $H^{N-2}_x$ locally as $\eps \rightarrow 0$. Consequently, the limits \eqref{Limit-theta-1}, \eqref{Limit-theta-2} and \eqref{Limit-theta-3} imply the convergence \eqref{Converge-theta-3}. It is thereby yielded by collecting the limits \eqref{Converge-theta-1}, \eqref{Converge-theta-2} and \eqref{Converge-theta-3} that
\begin{equation}
  \begin{aligned}
    \partial_t ( \tfrac{3}{5} \theta - \tfrac{2}{5} \rho ) + \nabla_x \cdot \big[ u ( \tfrac{3}{5} \theta - \tfrac{2}{5} \rho ) \big] = \kappa \Delta_x \theta
  \end{aligned}
\end{equation}
with the initial data
\begin{equation}
  \begin{aligned}
    ( \tfrac{3}{5} \theta - \tfrac{2}{5} \rho ) (0,x) = ( \tfrac{3}{5} \theta_0 - \tfrac{2}{5} \rho_0 ) (x) \,.
  \end{aligned}
\end{equation}

Finally, it follows from the convergence \eqref{Convgc-g-phi}that
\begin{equation}\label{Limit-Delta-phi}
  \begin{aligned}
    \Delta_x \phi_\eps \rightarrow \Delta_x \phi
  \end{aligned}
\end{equation}
weakly-$\star$ for $t \geq 0$ and weakly in $H^{N-1}_x$ as $\eps \rightarrow 0$. Then, from the convergence \eqref{Convgn-rho-u-theta}, \eqref{Limit-Delta-phi} and Poisson equation $\Delta_x \phi_\eps = \gamma \rho_\eps$ in \eqref{Consevtn-Law-rho-u-theta-phi}, we deduce that
\begin{equation}\no
  \begin{aligned}
    \Delta_x \phi = \gamma \rho \,.
  \end{aligned}
\end{equation}
Recalling the Boussinesq relation \eqref{Boussinesq-Reltn}, we have $\Delta_x (\rho + \theta) = \gamma \Delta_x \phi$, which immediately implies that
\begin{equation}
  \begin{aligned}
    \Delta_x (\rho + \theta) = \gamma^2 \rho \,.
  \end{aligned}
\end{equation}
Collecting the all above convergence results, we have shown that $( \rho, u , \theta ) \in L^\infty (\R^+ ; H^N_x)$ with $ ( u, \tfrac{3}{5} \theta - \tfrac{2}{5} \rho ) \in C(\R^+; H^{N-1}_x) $ obey the following incompressible NSFP equations
\begin{equation*}
  \left\{
    \begin{array}{l}
       \partial_t u + u \cdot \nabla_x u + \nabla_x p = \mu \Delta_x u + \rho \nabla_x \theta \,, \\
       \nabla_x \cdot u = 0 \,, \\
       \partial_t ( \tfrac{3}{5} \theta - \tfrac{2}{5} \rho ) + u \cdot \nabla_x ( \tfrac{3}{5} \theta - \tfrac{2}{5} \rho ) = \kappa \Delta_x \theta \,, \\
       \Delta_x (\rho + \theta) = \gamma^2 \rho \,,
    \end{array}
  \right.
\end{equation*}
with initial data
\begin{equation}\no
  \begin{aligned}
    u (0,x) = u_0 (x) \,, \quad ( \tfrac{3}{5} \theta - \tfrac{2}{5} \rho ) (0,x) = \tfrac{3}{5} \theta_0 (x) - \tfrac{2}{5} \rho_0 (x) \,.
  \end{aligned}
\end{equation}
Consequently, the proof of Theorem \ref{Thm-Limit} is finished. \qquad \qquad \qquad \qquad \qquad \qquad\qquad \qquad \qquad$\square$


\section*{Acknowledgment}

This work was supported by the grants from the National Natural Science Foundation of
China under contract No. 11471181 and No. 11731008.

\bigskip

\bibliography{reference}

\begin{thebibliography}{99}

\bibitem{Adams-Fournier-2003-Sobolev} R. A. Adams and J. F. Fournier. {\em Sobolev spaces. Second edition.} Pure and Applied Mathematics (Amsterdam), 140. Elsevier/Academic Press, Amsterdam, 2003. xiv+305 pp. ISBN: 0-12-044143-8.

\bibitem{amuxy3} R. Alexandre, Y. Morimoto, S.  Ukai, C.-J.   Xu
and T. Yang, {\it
	Global existence and full regularity of the Boltzmann equation without angular cutoff}.
Comm. Math. Phys. {\bf 304} (2011) 513-581.

\bibitem{amuxy4-3} R. Alexandre, Y. Morimoto, S. Ukai,,  C.-J. Xu
and T. Yang, {\it The Boltzmann equation without angular cutoff in the whole
	space: II.  Global existence for hard potentia}l.
Anal. Appl.(Singap.) {\bf 9} (2011), 113-134.

\bibitem{amuxy4-2} R. Alexandre, Y. Morimoto, S. Ukai,,  C.-J. Xu
and T. Yang, {\it The Boltzmann equation without angular cutoff in the whole
	space: I. Global existence for soft potential}. J. Funct. Anal. {\bf 262}
(2012), 915-1010.


\bibitem{amuxy7}
R. Alexandre, Y. Morimoto, S. Ukai,,  C.-J. Xu and T. Yang,
{ \it Local existence with mild regularity for the Boltzmann equation}.
Kinet. Relat. Models  {\bf 6} (2013), no. 4, 1011-1041.

\bibitem{al-3}
R. Alexandre and C. Villani,  {\it  On the Boltzmann equation for
	long-range interaction}.  Commun. Pure and Appl. Math. {\bf 55} (2002),  30-70.

\bibitem{Arsenio} D. Ars\'enio,
{\it From Boltzmann's Equation to the incompressible Navier-Stokes-Fourier system with long-range interactions}. { Arch. Ration. Mech. Anal.} {\bf 206} (2012), no. 3, 367-488

\bibitem{Arsenio-SaintRaymond-2016} D. Ars\'enio and L. Saint-Raymond, From the Vlasov-Maxwell-Boltzmann system to incompressible viscous electro-magneto-hydrodynamics, arXiv:1604.01547[math.AP], 2016.

\bibitem{BGL1} C. Bardos, F. Golse and C. D. Levermore, Fluid dynamic limits of kinetic equations I: formal derivation. {\em J. Stat. Phys.}, {\bf 63} (1991), 323-344.

\bibitem {BGL2} C. Bardos, F. Golse, and C. D. Levermore,
{\em Fluid dynamic limits of kinetic equations II: convergence proof
	for the Boltzmann equation.} Commun. Pure and Appl. Math. {\bf 46}
(1993), 667-753

\bibitem {BGL3} C. Bardos, F. Golse, and C. D. Levermore,
{\em The acoustic limit for the Boltzmann equation.} { Arch. Ration.
	Mech. Anal.} {\bf 153} (2000), no. 3, 177-204

\bibitem{b-u}
C. Bardos and S. Ukai, {\it The classical incompressible Navier-Stokes limit of the Boltzmann equation.} Math. Models Methods Appl. Sci. {\bf 1} (1991), no.2, 235-257.

\bibitem{Boyer-Fabrie-2013-BOOK} F. Boyer and P. Fabrie. {\em Mathematical tools for the study of the incompressible Navier-Stokes equations and related models}. Applied Mathematical Sciences, {\bf 183}, Springer, New York, 2013.

\bibitem{Briant} M. Briant,
From the Boltzmann equation to the incompressible Navier-Stokes equations on the torus: a quantitative error estimate. {\em J. Differential Equations} {\bf 259} (2015), no. 11, 6072-6141.

\bibitem{Briant2017} M. Briant,
Perturbative theory for the Boltzmann equation in bounded domains with different boundary conditions. {\em Kinet. Relat. Models} {\bf 10} (2017), no. 2, 329-371.

\bibitem{Briant-Guo} M. Briant and Y. Guo,
Asymptotic stability of the Boltzmann equation with Maxwell boundary conditions. {\em J. Differential Equations} {\bf 261} (2016), no. 12, 7000-7079.

\bibitem{BMMouhot} M. Briant, S. Merino-Aceituno, C. Mouhot,
From Boltzmann to incompressible Navier-Stokes in Sobolev spaces with polynomial weight. arXiv:1412.4653


\bibitem{Caflisch}
R. Caflisch,
{\it The fluid dynamic limit of the nonlinear Boltzmann equation.} Comm. Pure Appl. Math. {\bf 33} (1980), no. 5, 651-666.

\bibitem{Cer} C. Cercignani, {\em The Boltzmann equation and its applications.}
Springer, New York, 1988.

\bibitem{CIP} C. Cercignani, R. Illner and M. Pulvirenti. {\em the mathematical theory of dilute
	gases.} Springer, New York, 1994.

\bibitem{DEL-89}
A. De Masi, R. Esposito, and J. L. Lebowitz,
{\it Incompressible Navier-Stokes and Euler limits of the Boltzmann equation.}
Comm. Pure Appl. Math. {\bf 42} (1989), no. 8, 1189-1214.

\bibitem{D-L} R. J. DiPerna, P.L.  Lions, {\it On the Cauchy problem for Boltzmann
	equations: global existence and weak stability}.  Ann. Math.
{\bf 130}(1989), 321-366.

\bibitem{D-Y-Z(hard)-2011} Duan R.-J., Yang, T., and Zhao H.-J., 
The Vlasov-Poisson-Boltzmann system in the whole space: The hard potential case. \textit{J. Differential Equations} \textbf{252} (2012), no. 12, 6356--6386.


\bibitem{D-Y-Z(soft)-2011} Duan R.-J., Yang T., and Zhao H.-J., 
The Vlasov-Poisson-Boltzmann system for soft potentials. \textit{Mathematical Models and Methods in Applied Sciences} \textbf{23} (2013)  no. 06, 979--1028.

\bibitem{Glassey} R. Glassey,
{\em The Cauchy problem in kinetic theory.} Society for Industrial and Applied Mathematics (SIAM), Philadelphia, PA, 1996.

\bibitem{GL} F. Golse and C. D. Levermore,
{\em The Stokes-Fourier and acoustic limits for the Boltzmann
	equation.} Comm. on Pure and Appl. Math. {\bf 55} (2002), 336-393.

\bibitem{GL-05} F. Golse and C. D. Levermore,
Hydrodynamic limits of kinetic models. {\em Topics in kinetic theory}, 1-75, Fields Inst. Commun., 46, Amer. Math. Soc., Providence, RI, 2005.

\bibitem{Go-Sai04} F. Golse and L. Saint-Raymond,
{\em The Navier-Stokes limit of the Boltzmann equation for bounded
	collision kernels.} Invent. Math. {\bf 155} (2004), no. 1, 81--161.

\bibitem{G-S05} F. Golse and L. Saint-Raymond,
Hydrodynamic limits for the Boltzmann equation. {\em Riv. Mat. Univ. Parma} (7) {\bf 4}** (2005), 1-144.

\bibitem{Go-Sai09} F. Golse and L. Saint-Raymond,
{\em The incompressible Navier-Stokes limit of the Boltzmann
	equation for hard cutoff potentials.}  J. Math. Pures Appl. (9) {\bf
	91} (2009), no. 5, 508--552.

\bibitem{G-S} P. Gressman and R. Strain,
{\it Global classical solutions of the Boltzmann equation without angular cut-off.}
{ J. Amer. Math. Soc.} {\bf 24} (2011), no. 3, 771-847

\bibitem{GMM} M. P. Gualdani, S. Mischler and C. Mouhot,
Factorization for non-symmetric operators and exponential H-theorem. arXiv:1006.5523v3 To appear on Memoire de la Societe Mathematique de France.

\bibitem{Guo-2002-VPB} Y. Guo, The Vlasov-Poisson-Boltzmann system near Maxwellians, {\em Comm. Pure Appl. Math.}, {\bf 55} (2002), pp. 1104-1135.

\bibitem{guo-T} Y. Guo,
{\it Classical solution to the Boltzmann Equation for molecules with an angular cutoff.}  Arch. Rational Mech. Anal. {\bf 169} (2003) 305-353.

\bibitem{guo-1} Y. Guo,
{\it The Boltzmann equation in the whole space.} Indiana Univ. Math. J. {\bf 53-4} (2004) 1081-1094.

\bibitem{GY-06} Y. Guo. Boltzmann diffusive limit beyond the Navier-Stokes approximation. {\em Comm. Pure Appl. Math.}, {\bf 59} (2006), no. 5, 626-687.

\bibitem{GY-2010} Y. Guo,
Decay and continuity of the Boltzmann equation in bounded domains. {\em Arch. Ration. Mech. Anal.} {\bf 197} (2010), no. 3, 713-809.

\bibitem{GJJ-KRM2009} Y. Guo, J. Jang and N. Jiang,
Local Hilbert expansion for the Boltzmann equation. {\em Kinet. Relat. Models} {\bf 2} (2009), no. 1, 205-214.

\bibitem{GJJ-CPAM2010} Y. Guo, J. Jang and N. Jiang,
Acoustic limit for the Boltzmann equation in optimal scaling. {\em Comm. Pure Appl. Math.} {\bf 63} (2010), no. 3, 337-361.

\bibitem{GKTT-2016} Y. Guo, C. Kim, D. Tonon, and A. Trescases,
BV-regularity of the Boltzmann equation in non-convex domains. {\em Arch. Ration. Mech. Anal.} {\bf 220} (2016), no. 3, 1045-1093.

\bibitem{GKTT-2017} Y. Guo, C. Kim, D. Tonon, and A. Trescases,
Regularity of the Boltzmann equation in convex domains. {\em Invent. Math.} {\bf 207} (2017), no. 1, 115-290.

\bibitem{JJ-DCDS2009} J. Jang and N. Jiang,
Acoustic limit of the Boltzmann equation: classical solutions. {\em Discrete Contin. Dyn. Syst.} {\bf 25} (2009), no. 3, 869-882.

\bibitem{JLM-CPDE2010} N. Jiang, C. D. Levermore and N. Masmoudi,
Remarks on the acoustic limit for the Boltzmann equation. {\em Comm. Partial Differential Equations} {\bf 35} (2010), no. 9, 1590-1609.

\bibitem{Jiang-Luo-2019-VMB} N. Jiang and Y.-L. Luo. From Vlasov-Maxwell-Boltzmann system to two-fluid incompressible Navier-Stokes-Fourier-Maxwell system with Ohm's law: convergence for classical solutions. {arXiv:1905.04739[math.AP]}, 2019. 

\bibitem{JM-CPAM2017} N. Jiang and N. Masmoudi,
Boundary layers and incompressible Navier-Stokes-Fourier limit of the Boltzmann equation in bounded domain I. {\em Comm. Pure Appl. Math.} {\bf 70} (2017), no. 1, 90-171.

\bibitem{JX-SIMA2015}N. Jiang and L. J. Xiong,
Diffusive limit of the Boltzmann equation with fluid initial layer in the periodic domain. {\em SIAM J. Math. Anal.} {\bf 47} (2015), no. 3, 1747-1777.

\bibitem{Jiang-Xu-Zhao-2018-Indiana} N. Jiang, C-J, Xu and H. Zhao. Incompressible Navier-Stokes-Fourier limit from the Boltzmann equation: classical solutions. {\em Indiana University Mathematical Journal}. {\bf 67} (2018), no. 5, 1817-1855.

\bibitem{KMN} S. Kawashima, A. Matsumura and T. Nishida,
{\em On the fluid dynamical approximation to the Boltzmann equation at the level of the Navier-Stokes equation,} Comm. Math. Phys., {\bf 70} (1979), 97-124.

\bibitem{LM} C. D. Levermore and N. Masmoudi,
{\em From the Boltzmann equation to an incompressible Navier-Stokes-Fourier system.}
Arch. Ration. Mech. Anal. {\bf 196} (2010), no. 3, 753-809.

\bibitem{Levermore-Sun-2010-KRM} C. D. Levermore and W. Sun, Compactness of the gain parts of the linearized Boltzmann operator with weakly cutoff kernels. {\em Kinet. Relat. Models}, {\bf 3} (2010), no. 2, 335-351.

\bibitem{Lions1994-1} P.-L. Lions,  
Compactness in Boltzmann's equation via Fourier integral operators and applications. I, II. {\em J. Math. Kyoto Univ.} {\bf 34} (1994), no. 2, 391-427, 429-461.

\bibitem{LM3} P.-L. Lions and N. Masmoudi,
{\em From Boltzmann equation to Navier-Stokes and Euler equations
	I.} Arch. Ration. Mech. Anal. {\bf 158} (2001), 173-193.

\bibitem{LM4} P.-L. Lions and N. Masmoudi,
{\em From Boltzmann equation to Navier-Stokes and Euler equations
	II.} Arch. Ration. Mech. Anal. {\bf 158} (2001), 195-211.

\bibitem{liu-2}
T.-P. Liu, T. Yang, T., and S.-H. Yu, {\it
	Energy method for Boltzmann equation},
{ Phys. D } {\bf 188}  (2004), 178-192.

\bibitem{MSRM-CPAM2003} N. Masmoudi and L. Saint-Raymond,
From the Boltzmann equation to the Stokes-Fourier system in a bounded domain. {\em Comm. Pure Appl. Math.} {\bf 56} (2003), no. 9, 1263-1293.

\bibitem{mischler2010asens}	S.~Mischler,
\newblock Kinetic equations with {M}axwell boundary conditions.
\newblock \emph{Ann. Sci. \'Ec. Norm. Sup\'er. (4)} \textbf{43} (2010), 719--760.

\bibitem{Nirenberg-1959-ASNSP} L. Nirenberg, On elliptic partial differential equations. {\em Ann. Scuola Norm. Sup. Pisa (3)}, {\bf 13} (1959), 115-162.

\bibitem{Nishida} T. Nishida,
{\em Fluid dynamical limit of the nonlinear Boltzmann equation to the level of the compressible Euler equation,} Comm. Math. Phys., {\bf 61} (1978), 119-148.

\bibitem{Saint-Raymond-2009-Boltzmann} L. Saint-Raymond. {\em Hydrodynamic limits of the Boltzmann equations}, volume 1971 of {\em Lecture Notes in Mathematics.} Springer-Verlag, Berlin, 2009.

\bibitem{SRM2010} L. Saint-Raymond,
Some recent results about the sixth problem of Hilbert: hydrodynamic limits of the Boltzmann equation.{\em  European Congress of Mathematics}, 419-439, Eur. Math. Soc., Zurich, 2010.

\bibitem{Simon-1987-AMPA} J. Simon. Compact sets in the space $L^p (0, T; B)$. {\em Ann. Mat. Pura Appl.}, {\bf 146} (1987), no. 4, 65-96.

\bibitem{Ukai-1974} S. Ukai,
On the existence of global solutions of mixed problem for non-linear Boltzmann equation. {\em Proc. Japan Acad.} {\bf 50} (1974), 179-184.

\bibitem{Ukai-1986} S. Ukai,
Solutions of the Boltzmann equation. {\em Patterns and waves}, 37-96,
Stud. Math. Appl., 18, North-Holland, Amsterdam, 1986.

\bibitem{Ukai-2006} S. Ukai,
Asymptotic analysis of fluid equations. {\em Mathematical foundation of turbulent viscous flows}, 189-250,
Lecture Notes in Math., {\bf 1871}, Springer, Berlin, 2006.

\bibitem{XXZhard}  Xiao Q.-H., Xiong L.-J., and Zhao H.-J., 
The Vlasov-Posson-Boltzmann system without angular cutoff for hard potential. {\it Science China Mathematics}, \textbf{57} (2014), no. 3, 515-540.

 \bibitem{XXZsoft}  Xiao Q.-H., Xiong L.-J., and Zhao H.-J., 
 The Vlasov-Poisson-Boltzmann system for the whole range of cutoff soft potentials.{\it J. Funct. Anal.} \textbf{272} (2017), no. 1, 166-226.

\end{thebibliography}

\end{document}